\setlist[enumerate,1]{label=\textit{\alph*)}}
\theoremstyle{plain}
\newtheorem{theorem}{Theorem}
    \newtheorem{lemma}[theorem]{Lemma}
  \theoremstyle{remark}
    \newtheorem{remark}[theorem]{Remark}
\theoremstyle{definition}
  \newtheorem{example}[theorem]{Example}
  \theoremstyle{definition}
  \newtheorem{definition}[theorem]{Definition}
  \theoremstyle{plain}
    \newtheorem{proposition}[theorem]{Proposition}
  \theoremstyle{plain}
    \newtheorem{corollary}[theorem]{Corollary}
\DeclareMathOperator{\indeg}{indeg}
\def\neg{\mathrm{neg}}
\def\N{\mathbb{N}}
\def\Z{\mathbb{Z}}
\def\R{\mathbb{R}}
\def\Q{\mathbb{Q}}
\def\m1{^{ \hbox{\small{-}}1}}
\title[Beyond Boolean networks]{Beyond Boolean networks: new tools for the steady state analysis of
multivalued networks}
\author[Garc\'{\i}a Galofre et al.]{J. Garc\'{\i}a Galofre${}^1$, M. P\'erez Mill\'an${}^{2,3}$, A. Galarza Rial${}^4$, \\ R. Laubenbacher${}^5$, A. Dickenstein${}^{2,3}$}
\address{$^1$ Departamento de Matem\'atica y Ciencias, Universidad de San Andres, 
(B1644BID) Victoria, Prov. B. Aires, Argentina.}
\address{$^2$ Instituto de Investigaciones Matem\'aticas ``Luis A. Santal\'o" (UBA-CONICET), C1428EGA Buenos Aires, Argentina.}
\address{$^3$ Departamento  de Matem\'atica, FCEN, Universidad de Buenos Aires,
    Ciudad Universitaria, Pab.\ I, C1428EGA Buenos Aires, Argentina.}
\address{$^4$ Departamento de Ciencias Exactas, Ciclo B\'asico Com\'un, Universidad de Buenos Aires,
 C1405CAE Buenos Aires, Argentina}
\address{$^5$ Laboratory for Systems Medicine, Department of Medicine, University of Florida,
Gainesville, FL, USA.}
\email{jgarciagalofre@udesa.edu.ar, mpmillan@dm.uba.ar, ayegari89@gmail.com,\newline reinhard.laubenbacher@medicine.ufl.edu, alidick@dm.uba.ar}
\date{}
\thanks{MPM and AD were partially supported by UBACYT 20020220200166BA and CONICET PIP 11220200100182CO, Argentina. RL was partially supported by NIH Grants 1 R01 HL169974-01 and 1U01EB024501-01.} 
\begin{document}

\begin{abstract}

Boolean networks can be viewed as functions on the set of binary strings of a given length, described via logical rules. They were introduced as dynamic models into biology, in particular as logical models of intracellular regulatory networks involving genes, proteins, and metabolites. Since genes can have several modes of action depending on their expression levels, binary variables are often not sufficiently rich, requiring the use of multivalued
networks instead. 

In this paper, we explore the multivalued generalization of Boolean networks by writing the standard $(\wedge, \vee, \lnot)$ operations on $\{0, 1\}$ in terms of the operations $(\odot, \oplus, \neg)$ on $\big\{0,\frac{1}{m}, \frac{2}{m}, \dots, \frac{m-1}{m}, 1\big\}$ from multivalued logic.
We recall the basic theory of this mathematical framework, and give a novel algorithm for computing
the fixed points that in many cases has essentially the same complexity as in the binary case.
 Our approach provides a biologically intuitive representation of the network.  
    Furthermore, it uses tools to compute lattice points in rational polytopes, tapping a rich area of algebraic combinatorics as a source for combinatorial algorithms for  network analysis. An implementation of the algorithm is provided.  
\end{abstract}

 \maketitle

\section{Introduction}

Boolean networks have been used in computer science, engineering, and physics extensively. They were introduced into biology by Stuart Kauffman~\cite{Kauffman69} in 1969, as a model class for intracellular regulatory networks, viewed as logical switching networks rather than as biochemical reaction networks. This has proven very useful, in part because their specification is intuitive from a biological point of view, and there are now hundreds of published Boolean network models of this type.  

The setting of Boolean networks owes much to the work of the Belgian scientist Ren\'e Thomas. His research included DNA biochemistry,   genetics, biophysics, mathematical biology, and dynamical systems. His purpose was to decipher the key logical principles at the basis of the behavior of biological systems and how complex dynamical behavior could emerge. He realized that the researcher's intuition was not enough to understand the intrincated biological regulatory networks and he then proposed to formalize the models using the operations in Boolean algebra, where variables take only two values ($0$ or OFF and $1$ or ON) and the logical operators are AND, OR and NOT, starting with~\cite{Thomas73} in 1973. In the article~\cite{Thomas91} published in 1991 he argues that even if one is interested in a logical description, the binary case could be too simple in many applications. He adds that for instance, when there is a variable with more than a single action, different levels might be required to produce different effects.  For applications of this modeling framework to biology, it is thus often useful to be able to assume that certain variables have more than two possible values (see for instance~\cite{Albert}). In gene regulation, for instance, a given gene may have several models of action, depending on its expression levels. In order to capture this complexity, a variable might need to take several different values, such as ``0 (off), 1 (medium expression), 2 (high expression)." The functions governing such {\em multivalued } networks can be expressed in terms of multivalued logic. The analysis of {\em steady states} (also known as {\em stable states} or {\em fixed points}) becomes significantly more complex. 
This paper presents an algorithm to carry out such analysis in a computationally new way. 
Before expanding on our results, we first summarize other approaches to this question.

For models with a small number of variables, exhaustive model simulation is feasible, while for larger models, a method used in the Boolean setting is sampling of the state space of the model.  A first idea to treat a multivalued model is to translate it into a  Boolean network~\cite{Ham}. More variables are needed and the 
Boolean network is in principle only partially defined (see for instance~\cite{Tonello} and the references therein).

The logical formalism, originally used to model regulatory and signaling Boolean networks, has been extended to allow for 
multiple values of the different nodes. 
Several groups contributed various methods and tools for the analysis of multivalued logical models. 
In the 2003 article~\cite{Devloo}, image functions are introduced to compute the value of the logical parameter associated with a logical variable depending on the state of the system; they use constraint programming, a method for solving constraint  satisfaction problems. In the article~\cite{Schaub} from 2007, qualitative (multivalued) networks were introduced, using formal verification methods to check the consistency of a model with  respect to experimental data and to analyze the steady states.
In the 2007 paper~\cite{Naldi07}, the authors propose a method to compute steady states of Boolean and  {\em unitary}  multivalued models (see the paragraph before Definition~\ref{def:cont}) using decision diagrams. This approach is implemented in the Gene Interaction Network simulation suite (GINsim), a software designed for the qualitative modeling and analysis of regulatory networks~\cite{Naldi09} using rule-based descriptions. Some recent methodological advances of this logic framework are illustrated in~\cite{Chaouiya24}. The BioModelAnalyzer~\cite{BMA} presented in 2018 is a visual tool for modeling and analyzing biological networks based upon formal methods developed for the specification and verification of properties in concurrent software systems.
The CoLoMoTo Interactive Notebook~\cite{CoLoMoTo} provides an interesting and  unified environment to edit, execute, share, and reproduce analyses of qualitative models of biological networks since 2018; it relies on Docker and Jupyter technologies. The recent paper~\cite{Trap} generalizes trap spaces of Boolean networks to the multivalued setting, providing a Python implementation. 
A different approach to model multivalued networks as logic programs since 2014 utilizes the formulation of fuzzy logic~\cite{FAST, FAST1}. They integrate the operations from multivalued logic that we describe in Section~~\ref{sec:prelim} with the paradigms of Fuzzy Answer Set Programming (FASP) to model the dynamics of Gene Regulatory Networks. FASP is an extension of the Answer Set Programming (ASP), a paradigm that allows for modeling and solving combinatorial search problems in continuous domains.
These multivalued logical modelings admit a different number of values for each node, but a precise complexity computation is lacking in general. 

When the number of values is the same for all nodes  and there are $n$ nodes or variables, one possible algebraic approach is the following. Denote the number of values by the integer $m+1$ (so, the Boolean case corresponds to $m=1$). In case $m+1$ is a power of a prime number, any function on $n$ variables defined over  a finite set $X$ of cardinality $m+1$ can be thought of as a function over a finite field. Moreover, it can be expressed as a polynomial function with coefficients in this field, and this opens the use of tools from computational  algebraic geometry such as Gr\"obner bases~\cite{VCJL}. The advantage is that it is not necessary to express the function via a whole exponentially large table.  
For instance, given a function $f:X^ n \to X^n$, we can find the {\em fixed points} $\{ x \in X^n \, : \, f(x)=x\}$ 
 of $f$  by solving the square polynomial system $ f(x) -x =0$.  In~\cite{BFSS13} the authors show that using this approach in the Boolean case, it possible to compute the fixed points with complexity $O(2^{0.841 n})$ (under some assumptions), while the bound for exhaustive search is $ 4 \,  {\rm log}_2(n) \,  2^n$. A different algebraic approach is considered in~\cite{LC10}, where the authors study multilinear functions representing network functions $f:X^n \to X^n$.  

Our starting point is the following.
Given $n$ interacting (biological) species with $m+1$ possible values each, we can establish a bijection between the set of values and the set
\begin{equation}\label{eq :X}
X_m=\big\{0,\tfrac{1}{m}, \tfrac{2}{m}, \dots, \tfrac{m-1}{m}, 1\big\} \subset [0,1].
\end{equation}
Our aim is to study the dynamics of the iteration of a function $f: X_m^n \to X_m^n$. We  use in this multivalued setting the operations $\odot$,  $\oplus$ and $\neg$  from multivalued logic~\cite{Cignoli,  Epstein} (introduced in Section~\ref{sec:prelim}), which coincide with the standard Boolean operations AND, OR and NOT when $m=1$. A standard notation for $X_m$ in the setting of MV-algebras is $L_{m+1}$, where $L$ stands for {\L}ukasiewicz. 

These operations are more intuitive and closer to biological interpretations than the usual  operations of polynomials over a finite field (and there is no restriction on the number of values). See for instance Figure~\ref{fig:polyn_vs_multivalued}, that features an example of two interacting nodes, with three values each, where the second variable acts as a repressor of the first one.  The update function  $f_1$ of the first node is represented both as a polynomial over the field with $3$ elements, which gives no clue of its behavior, and in terms of the logic operations we propose, whose interpretation is transparent. We also give the input in terms of target functions as in  GINsim, which is less compact (and the formulae grow rapidly as the number of values increases).
\begin{figure}[ht!]
    \centering
\begin{minipage}{.4\textwidth}
\centering
    \renewcommand{\arraystretch}{1.2}
     \makebox[.5\linewidth]{$
    \begin{array}{c||c|c|c}
    x_1\setminus x_2 &0&\tfrac{1}{2}&1\\
    \hline
    \hline
    0&0&0&0\\
    \hline
    \tfrac{1}{2}&\tfrac{1}{2}&0&0\\
    \hline
    1&1&\tfrac{1}{2}&0\\
    \end{array}
    $}
    
    \medskip
    
    \makebox[.5\linewidth]{(a) $f_1:\{0,\tfrac{1}{2},1\}^2\to\{0,\tfrac{1}{2},1\}$}
    \makebox[.5\linewidth]{$f_1(x_1,x_2)=x_1 \ominus x_2 $}

\end{minipage}
 \begin{minipage}{.5\textwidth}
 \centering
    \renewcommand{\arraystretch}{1.2}
    \makebox[.5\linewidth]{
    $\begin{array}{c||c|c|c}
    x_1\setminus x_2 &0&1&2\\
    \hline
    \hline
    0&0&0&0\\
    \hline
    1&1&0&0\\
    \hline
    2&2&1&0\\
    \end{array}$
    }

    \medskip
    
    \makebox[.5\linewidth]{(b) $f_1:\mathbb{F}_3^2\to\mathbb{F}_3 $}
    \makebox[.5\linewidth]{$f_1(x_1,x_2)=x_1 +2 x_1x_2 +x_1^2x_2+x_1x_2^2+x_1^2x_2^2$}
\end{minipage}

\medskip

    \makebox[.5\linewidth]{(c) $f_1:\{0,1,2\}^2\to\{0,1,2\} $}
    \makebox[.5\linewidth]{$f_1(x_1,x_2)=\begin{cases}
        2  & \text{if }(x_1{:}2~\&~ !x_2)\\
        1  & \text{if }(x_1{:}1 ~\&~ !x_2)|(x_1{:}2 ~\&~ x_2{:}1)\\
        0 & \text{otherwise}.
    \end{cases}$}
    \caption{ (a) The nodes take the values $\{0,\tfrac{1}{2},1\}$. The repression shown in the table is naturally described by the function $f_1=x_1 \ominus x_2 = x_1\odot\neg(x_2) = \max \{0, x_1-x_2\}$. In (b) and (c)  the nodes take the values $\{0,1,2\}$ and we consider the translated table. In (b), the polynomial $f_1\in\mathbb{F}_3[x_1,x_2]$ summarizes the values in the table over the field $\mathbb F_3$. In (c), the table is instead expressed by the logical function $f_1$, where the notation means $\&$ $\leftrightarrow$ AND, $|$ $\leftrightarrow$ OR, $x_j{:}i$ $\leftrightarrow$ $x_j=i$ and $!x_j$ $\leftrightarrow$ $x_j=0$.}
    \label{fig:polyn_vs_multivalued}
\end{figure}

The study of Boolean networks could have high complexity, even if they are composed of simple elementary units~\cite{R23}.
We show that via the proposed operations of multivalued logic, which are indeed expressed in terms of  linear inequalities, we can recover most of the good properties of Boolean networks. Moreover,  we show that the computation of fixed points can be done algorithmically for any $m$ using tools to find lattice points in rational polytopes  and that in many cases, the complexity to compute the fixed points is essentially the same as in the Boolean setting.  As we mentioned, these operations were already considered in the papers~\cite{FAST, FAST1} from 2014 and 2018, where $X_m$ is denoted by $\Q_m$, but we introduce a novel computational approach. Even if we work with a synchronous update of the nodes, we present in Example~\ref{ex:TD}  a small network extracted from the foundational book~\cite{Thomas_DAri}, where the synchronous multivalued setting can reproduce the features of a model with time delays, as the networks become more {\it expressive}.

 \smallskip

This article has two parts: the first one, consisting of Sections~\ref{sec:prelim} and~\ref{sec:dotneg}, introduces 
 functions, structure, and motifs of multivalued  networks; the second one, consisting of Sections~\ref{sec:reductions},~\ref{sec:sstates} and~\ref{sec:bioexample}, presents theoretical and algorithmic results to compute the fixed points, together with biological examples.

In Section~\ref{sec:prelim} we introduce the logic operations $\oplus,\odot,\neg$ and highlight their main properties. We give, in Theorem~\ref{th:logic}, a constructive way of turning data from a table into an expression of the function in terms of $\odot, \neg$ and constant functions.   When modeling biological networks, it is useful to have in mind the behavior of different small networks that appear frequently as subnetworks of bigger ones, called {\em motifs}. We show  the behavior of several small motifs.  Our aim is to present simple examples of mild activators/inhibitors that a biologist could choose to try to match the experimental data.

In Section~\ref{sec:dotneg}, we introduce in Definition~\ref{def:dotneg} the notion of $\odot$-$\neg$ functions.   Networks defined by $\odot$-$\neg$ functions can be directly read from  their associated wiring diagrams. Algorithm~1 shows how to translate any network function into a $\odot$-$\neg$ function adding new variables and we show in Theorem~\ref{thm:DN} how to recover the fixed points of the original network.

In Section~\ref{sec:reductions} we propose several possible reductions of the number of variables (inspired by~\cite{VCLA} in the Boolean setting) without increasing the indegree, that is, without increasing the maximum number of variables on which each variable depends (see  Proposition~\ref{prop:indegree}). We present in Section~\ref{ssec:mammalian} an example 
which is the translation to our setting of an interesting network extracted from the website \url{cellcolective.org}, where the number of nodes  is significantly decreased by the reduction process. In the most interesting cases, the computation is too heavy to be done by hand and we used our publicly available implementation~\cite{Aye}  in Python language, that we explain in the next section.

In Section~\ref{sec:sstates}, we address the computation of fixed points for $\odot$-$\neg$ networks. As we mentioned, this gives in turn  the fixed points of any network function $F: X_m^n \to X_m^n$ by Theorem~\ref{thm:DN}. One crucial algorithmic fact for $\odot$-$\neg$ networks is Theorem~\ref{thm:ell}  that allows us to automatize the computations without simulations, which would be too costly in general. We present the basic setting for our implementation in Algorithm~\ref{algo2} and we discuss the cost of these computations. When the assumptions of Theorem~\ref{th:regions} do not hold, one needs to count the number of lattice points in a rational polytope, that is, all the integer solutions of a system of inequalities given by linear forms with integer coefficients defining a bounded region.  Barvinok proposed in~\cite{Barvinok} an algorithm
that counts these lattice points in polynomial time when the dimension of the polytope is fixed. The theoretical basis was given by Brion in the beautiful paper~\cite{Brion}, where he uses the relation of rational polytopes with the theory of toric varieties and results in equivariant $K$-theory.  
We exemplify the use of our free app for the computation of fixed points  in the network introduced in Section~\ref{ssec:mammalian}. The computation of fixed points in the Boolean case becomes straightforward in this setting, as it is reduced to just checking some set containments (see Lemma~\ref{lem:algoBool}).

In Section~\ref{sec:bioexample} we present a multivalued model for the denitrification network in \textit{Pseudomonas aeruginosa} based on the discrete-time and multivalued deterministic model introduced in~\cite{ABL}. \textit{P. aeruginosa} can perform (complete) denitrification, a respiratory process that eventually produces atmospheric N$_2$. The external parameters in the model and some variables are treated as Boolean (low or high), and other variables are considered ternary (low, medium or high). The update rules are formulated in ~\cite{ABL} in terms of MIN, MAX and NOT (which correspond to AND, OR and NOT in a Boolean setting). However, the regulations of a few variables cannot be expressed in terms of these operators (marked with a * in the column ``Update Rules" in  their Table~3). As stated in Theorem~\ref{th:logic}, any function can be written in terms of constants and the logical operations  $\oplus, \odot, \neg$ we propose to use in this paper.  We can thus exactly represent their update rules and find the fixed points using our techniques.

\section{The operations of MV-algebras and the representation of functions}\label{sec:prelim}

This section introduces the basic operations in multivalued logic and how we use them to represent the functions in biological models.

\smallskip

We fix a natural number $m$ and  consider networks with $m+1$ values in the set 
\begin{equation*}\label{eq:M}
X_m=\big\{0,\tfrac{1}{m}, \tfrac{2}{m}, \dots, \tfrac{m-1}{m}, 1\big\}\subset [0,1 ].
\end{equation*}

Boolean networks are a particular case when $m=1$. As $X_m$ is a subset of the real numbers $\R$, we can consider the standard operations of addition/substraction ($+/-$) and multiplication ($\cdot$)  on the real line. However, we introduce in the multivariate setting the operations $\odot$,  $\oplus$ that come from multivalued logic~\cite{Cignoli, Epstein}, which are more intuitive and closer to biological interpretations.

\subsection{The MV operations}\label{ssec:MV}

The three basic operations in multivalued logic that we use to build all the functions that we consider are introduced in the following definition:

\begin{definition} \label{def:opers}
We consider the following operations/maps on $X_m$:
 \begin{align}
 \label{neg}  \neg:X_m\rightarrow X_m, \text{ where } \neg(x) = 1-x. \\
 \label{oplus} \oplus:X_m\times X_m\rightarrow X_m, \text{ where } x\oplus y=\min\{1, x  \boldsymbol{+}y\}. \\
 \label{odot} \odot:X_m\times X_m\rightarrow X_m, \text{ where } x\odot y=\max\{0,x+y-1\}.
 \end{align}
\end{definition}

Note that these operations can be defined over the interval $[0,1]$ and then restricted to $X_m$ for any value of $m$ since their expressions do not depend on $m$.

\begin{example}
 When $m=2$, the operations \eqref{oplus} and \eqref{odot} in chart presentation are as follows:
  \renewcommand{\arraystretch}{1.2}
  \[
\begin{array}{c||c|c|c}
\oplus&0&\tfrac{1}{2}&1\\
\hline
\hline
0&0&\tfrac{1}{2}&1\\
\hline
\tfrac{1}{2}&\tfrac{1}{2}&1&1\\
\hline
1&1&1&1\\
\end{array}
\hskip 1cm
\begin{array}{c||c|c|c}
\odot&0&\tfrac{1}{2}&1\\
\hline
\hline
0&0&0&0\\
\hline
\tfrac{1}{2}&0&0&\tfrac{1}{2}\\
\hline
1&0&\tfrac{1}{2}&1\\
\end{array}
\]
\end{example}

\begin{remark}\label{rem:multi_prod}
The following relations between $\oplus$ and $\odot$, which are formally equal to the De Morgan's law between AND and OR in the Boolean case, hold:
 \begin{align}
  \label{eq:odot_as_oplus} x\odot y=\neg\left(\neg(x)\oplus \neg(y)\right),\\
  \label{eq:oplus_as_odot} x\oplus y= \neg\left(\neg(x)\odot \neg(y)\right).
 \end{align}
\end{remark}

We list in the following proposition other useful properties which are straightforward to prove.

\begin{proposition}\label{prop:oper}
Given $m$ and $X_m$ as in~\eqref{eq:M}, consider the operations defined in Definition~\ref{def:opers}. The following properties hold:
\begin{enumerate}[label=(\roman*)]
\item $\neg(0)=1$ and $\neg(1)=0$.
\item  The operations $\oplus$ and $\odot$ are associative and commutative. 
 \item \label{it:several_products} The product of several variables can be expressed as 
\begin{equation*}\label{eq:prodn}
x_1\odot \dots \odot x_n=\max\{0,x_1+\dots+x_n-(n-1)\}.
\end{equation*}
\item  \label{it:several_sums} Analogously, 
$$x_1\oplus \dots \oplus x_n=\min\{1,x_1+\dots+x_n\}.$$
\item   $0\odot x=0$ and $1\odot x=x$ for any $x \in X$. 
\item For any finite index set $I$, $\neg\left(\bigoplus_{i\in I} a_i\right)=\bigodot _{i\in I}\neg(a_i)$.
\item For $x,y \in X$, we have the equalities 

$\max\{x,y\} \, = \, \left(x \odot \neg(y)\right) \oplus y$,  \quad
$\min\{x,y\} = \, (x \oplus \neg(y)) \odot y$. 
\item  $x \le y$ if and only if $x \odot \neg(y) =0$.
\item $x \odot y =0$ if and only if $x+y \le 1$. More generally, $x_1\odot \dots \odot x_n =0$ if and only if $x_1 + \dots + x_n \le (n-1)$.
\end{enumerate}
\end{proposition}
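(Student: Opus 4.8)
The plan is to establish the two aggregate identities \emph{(iii)} and \emph{(iv)} first, since all the remaining items follow from them by short evaluations of the piecewise-linear formulas in Definition~\ref{def:opers}. I would begin by noting that commutativity of $\oplus$ and $\odot$ is immediate from the commutativity of $+$ on $\R$. For \emph{(iv)} I would argue by induction on $n$: writing $s=x_1+\cdots+x_{n-1}$, the inductive hypothesis gives $x_1\oplus\cdots\oplus x_n=\min\{1,\min\{1,s\}+x_n\}$ for the bracketing that peels off $x_n$ last, and a two-case split according to whether $s\ge 1$ or $s<1$ (this is exactly where the constraints $0\le x_i\le 1$ enter) shows this equals $\min\{1,s+x_n\}$. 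Since the right-hand side does not depend on the bracketing, this simultaneously proves the associativity of $\oplus$ asserted in \emph{(ii)}.

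For \emph{(iii)} and the associativity of $\odot$, I would invoke the binary De Morgan relations of Remark~\ref{rem:multi_prod}: since $x\odot y=\neg(\neg(x)\oplus\neg(y))$ and $\neg$ is an involution, associativity of $\odot$ follows from that of $\oplus$, and iterating gives $x_1\odot\cdots\odot x_n=\neg(\neg(x_1)\oplus\cdots\oplus\neg(x_n))$; plugging in \emph{(iv)} and simplifying $1-\min\{1,\sum(1-x_i)\}=\max\{0,\sum x_i-(n-1)\}$ yields \emph{(iii)}. (Alternatively, one can prove \emph{(iii)} directly by the same induction as \emph{(iv)}.) Item \emph{(vi)} is just the $n$-fold De Morgan law and drops out of this same computation.

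The remaining items are bookkeeping. \emph{(i)}: $\neg(0)=1$, $\neg(1)=0$ are immediate. \emph{(v)}: $0\odot x=\max\{0,x-1\}=0$ and $1\odot x=\max\{0,x\}=x$ because $0\le x\le 1$. For \emph{(vii)} I would first record the identity $x\odot\neg(y)=\max\{0,x+(1-y)-1\}=\max\{0,x-y\}$ and dually $x\oplus\neg(y)=\min\{1,x+1-y\}$; then $(x\odot\neg(y))\oplus y=\min\{1,\max\{0,x-y\}+y\}$ equals $x$ when $x\ge y$ and $y$ when $x<y$, i.e. it equals $\max\{x,y\}$, and symmetrically $(x\oplus\neg(y))\odot y=\min\{x,y\}$ (or deduce the $\min$ identity from the $\max$ one via De Morgan). \emph{(viii)}: the same expression $x\odot\neg(y)=\max\{0,x-y\}$ vanishes exactly when $x\le y$. \emph{(ix)}: $x\odot y=\max\{0,x+y-1\}=0$ iff $x+y\le 1$, and the general statement $x_1\odot\cdots\odot x_n=0\iff x_1+\cdots+x_n\le n-1$ is read straight off \emph{(iii)}.

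There is no genuine obstacle here; the single step requiring a little care is the case analysis in the inductive proof of \emph{(iv)} (equivalently, of the associativity of $\oplus$), where one must verify that nesting the truncations at $1$ does not change the result — precisely the point at which $x_i\in[0,1]$ is used. Everything after that is a direct evaluation of $\min/\max$ formulas.
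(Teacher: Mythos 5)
Your proof is correct and complete; the paper itself omits the argument, stating only that the properties are ``straightforward to prove,'' and your verification via the two aggregate identities \emph{(iii)}--\emph{(iv)} plus direct evaluation of the $\min/\max$ formulas is exactly the intended kind of argument. The one place worth a word of care is the associativity claim: the induction as phrased handles left-nested bracketings, but the same two-case computation $\min\{1,\min\{1,s\}+t\}=\min\{1,s+t\}$ for $s,t\ge 0$ applied with $n=3$ already gives the binary associativity law $(x\oplus y)\oplus z=x\oplus(y\oplus z)$, from which everything follows.
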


\begin{remark}\label{rem:nondist}
Associativity will allow us to avoid writing parentheses in some cases. More precisely, we will write $y_1\odot y_2 \odot y_3$ instead of $\big(y_1\odot y_2\big)\odot y_3\hspace{4mm} \hbox{or}\hspace{4mm} y_1\odot \big(y_2\odot y_3\big)$ where $y_i=x_i$ or $y_i=\neg(x_i)$.

On the other hand, the operations we defined do not satisfy distributivity. In general, $(x\oplus y)\odot z\neq (x\odot z)\oplus (y\odot z)$. Consider for example  $m=5, x=\frac{3}{5}, y=\frac{2}{5}, z=\frac{1}{5}$ then $(x\oplus y)\odot z=\frac{1}{5}$ while $(x\odot z)\oplus(y\odot z)=0.$
\end{remark}
 
 \begin{definition} \label{def:minus}
  We also define substraction, exponentiation and multiplication by a positive integer $k\in \N$ as:
  \begin{align} 
  \label{eq:minus}
   x\ominus y &:= x\odot \neg(y)=\max\{0,x-y\}=\left\{ \begin{array}{lcc}
             x-y &   \text{if}  & x-y \geq 0 
             \\ 0 &  \text{otherwise}  & 
             \end{array}
   \right.,\\
   \label{eq:exp} 
   x^k &:= \underset{k \text{ times}}{\underbrace{x\odot \dots \odot x}} =\max\{0,k x - (k-1)\}, \\
   \label{eq:scalar_mult}
    kx &:= \underset{k \text{ times}}{\underbrace{x\oplus \dots \oplus x}}= \min\{1,k x\}.
  \end{align}
 \end{definition}
 
\begin{remark} \label{rem:menor}
    Note that $x \odot y \le x$ for any  $x,y \in X_m$, and equality holds if and only if $y=1$ or $x=0$.  \newline In particular, $x^k \le x$ $\forall x \in X_m$ and  $\forall k \in \N$. The equality holds if and only if $x\in \{0, 1\}$ or $k=1$.
\end{remark}

\begin{remark} \label{rem:negkx}
    It is interesting to note that $\neg(kx)=\neg(x)^k$, indeed:
    \[
    \neg(kx)=\neg(\underset{k \text{ times}}{\underbrace{x\oplus \dots \oplus x}})=\neg\big(\neg(\underset{k \text{ times}}{\underbrace{\neg(x)\odot \dots \odot \neg(x)}} )\big)=\neg(x)^k.
    \]
    
\end{remark}

\subsection{Representability of functions} \label{ssec:representation}
We show next how to represent a function $f: X_m^n \to X_m$, in terms of $\odot$ and $\neg$ operators in a constructive way. We start with an example.

\begin{example}\label{ex:f0f1}
Let $m=2$, so that $X_m=X_2=\left\{0,\frac{1}{2}, 1\right\}$.
Define the following functions
\[f_0(x)=\neg(x)^2, \; f_1(x)=x^2, \text{ and } f_{\frac{1}{2}}(x)=\neg(f_0(x))\odot \neg(f_1(x)).\]

Then, 
 \[
 \begin{array}{c|c|c|c}
  x&f_0(x)&f_{\frac{1}{2}}(x)&f_1(x)\\
  \hline
  0&1&0&0\\
  \frac{1}{2}&0&1&0\\
  1&0&0&1
 \end{array}
 \]

\smallskip
 
 This means that  $f_0, f_{\tfrac{1}{2}}, f_1$ are indicators of the points in $X_2$ and they allow us to write any function $f:X_2 \to X_2$ in terms of $\oplus, \odot, \neg$ and constants (see Theorem~\ref{th:logic} below). Moreover, we deduce from the identities in Remark~\ref{rem:multi_prod} that we could further translate any $\oplus$ operator into operations involving only  $\odot$ and $\neg$ operations.
\end{example}

More in general, one can obtain interpolators for any $m$ using the explicit indicators in the following lemma, whose proof is straightforward.

\begin{lemma}\label{lem:interp}
Given $m\in \N$ and any $j =0, \dots, m$, let
\begin{equation}\label{eq:gh}
g_j(x):=\left(x\oplus \neg(\tfrac{j}{m})\right)^m = \left(x\oplus \frac{m-j}{m}\right)^m , \quad
h_j(x):=\left(\neg(x)\oplus \frac{j}{m}\right)^m.\end{equation}
Then, 
\begin{equation*}\label{eq:gh2}
g_j(x)=\left\lbrace\begin{array}{ll} 0 & \text{if }x<\frac{j}{m}\\ 1 & \text{if }x\geq\frac{j}{m}\end{array}\right., \quad
h_j(x)=\left\lbrace\begin{array}{ll} 0 & \text{if }x>\frac{j}{m}\\ 1 & \text{if }x\leq\frac{j}{m}.\end{array}\right.\end{equation*}
 Moreover, if $0 \le j \le k \le m$, the function
 $\ell_{jk}:=g_{j} \odot h_k$ is the characteristic function of the
 subset $[\frac j m, \frac k m]=\{\frac i m, j \le i \le k\}$. That is, its value equals $1$ on the ``interval''
 $[\frac j m, \frac k m]$, and $0$ otherwise. In particular, if $j=k$, the function $\ell_{jj}$ is the indicator of the point $\frac j m$.
\end{lemma}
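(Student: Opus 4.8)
The plan is to verify the claimed formulas for $g_j$ and $h_j$ directly from the definitions of the MV-operations, and then deduce the statement about $\ell_{jk}$ from the fact that the product (\odot) of two indicator functions is the indicator of the intersection of their supports. First I would compute $g_j$. By definition $g_j(x) = (x \oplus \neg(\tfrac jm))^m = (x \oplus \tfrac{m-j}{m})^m$. Using Proposition~\ref{prop:oper}\ref{it:several_sums}, or rather the definition of $\oplus$ together with the exponentiation formula in~\eqref{eq:exp}, we have $x \oplus \tfrac{m-j}{m} = \min\{1, x + \tfrac{m-j}{m}\}$, and then raising to the $m$-th power gives $\max\{0, m\cdot\min\{1, x+\tfrac{m-j}{m}\} - (m-1)\}$. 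Now analyze cases on the sign of $\tfrac jm - x$: if $x \ge \tfrac jm$, then $x + \tfrac{m-j}{m} \ge 1$, so the inner $\min$ equals $1$ and $g_j(x) = \max\{0, m-(m-1)\} = 1$; if $x < \tfrac jm$, then since $x, \tfrac jm \in X_m$ we actually have $x \le \tfrac{j-1}{m}$, so $x + \tfrac{m-j}{m} \le \tfrac{m-1}{m} < 1$, hence the inner $\min$ is $x + \tfrac{m-j}{m}$ and $g_j(x) = \max\{0, mx + (m-j) - (m-1)\} = \max\{0, mx - j + 1\} \le \max\{0, (j-1) - j + 1\} = 0$, so $g_j(x) = 0$. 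This proves the formula for $g_j$.

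For $h_j$, I would either repeat the same computation or, more slickly, observe that $h_j(x) = \big(\neg(x) \oplus \tfrac jm\big)^m = g_{m-j}(\neg(x))$, since $\neg(\tfrac{m-j}{m}) = \tfrac jm$. Then by the already-established formula for $g$, $h_j(x) = 1$ iff $\neg(x) = 1-x \ge \tfrac{m-j}{m}$, i.e.\ iff $x \le \tfrac jm$, and $h_j(x) = 0$ otherwise; this is exactly the claimed description of $h_j$. (One should note that $\neg$ maps $X_m$ bijectively onto itself, which is immediate from Proposition~\ref{prop:oper}(i) and the definition, so applying $g$ to $\neg(x)$ is legitimate.)

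Finally, for $\ell_{jk} = g_j \odot h_k$ with $0 \le j \le k \le m$: by Remark~\ref{rem:multi_prod} / Proposition~\ref{prop:oper}\ref{it:several_products}, $a \odot b = \max\{0, a+b-1\}$, and when $a, b \in \{0,1\}$ this equals $1$ precisely when $a = b = 1$ and $0$ otherwise --- i.e.\ $\odot$ restricted to $\{0,1\}$ is ordinary AND. Hence $\ell_{jk}(x) = 1$ iff $g_j(x) = 1$ and $h_k(x) = 1$, which by the two formulas just proved means $x \ge \tfrac jm$ and $x \le \tfrac km$ simultaneously, i.e.\ $x \in [\tfrac jm, \tfrac km] \cap X_m = \{\tfrac im : j \le i \le k\}$; otherwise $\ell_{jk}(x) = 0$. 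The special case $j = k$ gives that $\ell_{jj}$ is the indicator of the single point $\tfrac jm$.

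There is no real obstacle here; the only thing requiring a little care is the case split for $g_j$, where one must use that $x$ ranges over the discrete set $X_m$ (not all of $[0,1]$) to pass from the strict inequality $x < \tfrac jm$ to $x \le \tfrac{j-1}{m}$ and thereby land exactly on the value $0$ rather than some small positive number. Everything else is a direct substitution into the $\min$/$\max$ formulas of Definition~\ref{def:opers} and Definition~\ref{def:minus}, which is why the paper calls the proof straightforward.
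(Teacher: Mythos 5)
Your proof is correct and is exactly the direct $\min$/$\max$ verification that the paper has in mind when it declares the proof of Lemma~\ref{lem:interp} straightforward and omits it; in particular you correctly identify the one point of care, namely using discreteness of $X_m$ to upgrade $x<\tfrac jm$ to $x\le\tfrac{j-1}{m}$ so that $g_j$ lands exactly on $0$. The reduction $h_j(x)=g_{m-j}(\neg(x))$ and the observation that $\odot$ restricted to $\{0,1\}$ is AND are both valid and tidy.
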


The following result is well known for experts in the area, but for the convenience of the reader, we give a simple proof based on the indicator functions as in the previous example.

\begin{theorem}\label{th:logic} 
Given $X_m=\left\{0,\frac{1}{m},\dots,\frac{m-1}{m},1\right\}$  and $n\in \N$.
 Every function $f:X_m^n\longrightarrow  X_m$ with variables $x_1, \dots, x_n$, is expressible in terms of  $\odot, \neg$ and 
constant functions.
\end{theorem}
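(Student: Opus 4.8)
The plan is to reduce the statement to a finite ``sum of indicators'' representation, exactly mimicking the classical disjunctive-normal-form argument but carried out with the MV operations. First I would fix $f:X_m^n\to X_m$ and, for each point $a=(a_1,\dots,a_n)\in X_m^n$, build the indicator function $\chi_a:X_m^n\to X_m$ that takes the value $1$ at $a$ and $0$ elsewhere. The building blocks are already available: by Lemma~\ref{lem:interp}, for each coordinate the function $\ell_{j_ij_i}(x_i)$ is the indicator of the single point $\frac{j_i}{m}$, and each $\ell_{jj}=g_j\odot h_j$ is written using only $\odot$, $\neg$, $\oplus$ and constants. Then I would set
\[
\chi_a(x_1,\dots,x_n)\;=\;\ell_{a_1}(x_1)\odot \ell_{a_2}(x_2)\odot\cdots\odot\ell_{a_n}(x_n),
\]
where $\ell_{a_i}$ denotes the indicator of $a_i$. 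Since each factor is $0$ or $1$, property~\ref{it:several_products} of Proposition~\ref{prop:oper} (or just Proposition~\ref{prop:oper}(v)) shows this product is $1$ exactly when every $x_i=a_i$ and $0$ otherwise, so $\chi_a$ is indeed the indicator of $a$.

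Next I would assemble $f$ from these indicators. For each $a$ in the (finite) set $X_m^n$, the constant function with value $f(a)\in X_m\subset[0,1]$ is allowed, and the product $f(a)\odot\chi_a$ equals $f(a)$ at $x=a$ and $0$ elsewhere. The candidate representation is then
\[
f(x)\;=\;\bigoplus_{a\in X_m^n} \bigl(f(a)\odot \chi_a(x)\bigr).
\]
To verify this, evaluate at an arbitrary $x=b\in X_m^n$: every summand with $a\neq b$ vanishes, leaving a single term $f(b)\odot\chi_b(b)=f(b)\odot 1=f(b)$, and $\bigoplus$ with all the other (zero) terms does not change this by property~\ref{it:several_sums} together with Proposition~\ref{prop:oper}(v). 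Hence the right-hand side agrees with $f$ on all of $X_m^n$.

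Finally I would eliminate the $\oplus$'s. By Remark~\ref{rem:multi_prod}, equation~\eqref{eq:oplus_as_odot}, any $\oplus$ is expressible via $\odot$ and $\neg$: $u\oplus v=\neg(\neg(u)\odot\neg(v))$, and by associativity (Proposition~\ref{prop:oper}(ii)) this extends to the $(m+1)^n$-fold $\bigoplus$ above. Likewise the constants $f(a)$ and the constants $\frac{m-j}{m},\frac{j}{m}$ appearing inside $g_j,h_j$ are among the allowed constant functions, and the only remaining operations in $g_j$, $h_j$, $\chi_a$ and the outer sum are $\odot$ and $\neg$. Substituting everything yields an expression for $f$ in terms of $\odot$, $\neg$ and constant functions only, as required. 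The one point that needs a little care — the ``main obstacle'', such as it is — is the bookkeeping: confirming that each indicator $\ell_{jj}$ really does come out of Lemma~\ref{lem:interp} using only the permitted operations, and that associativity/commutativity genuinely license the unparenthesized iterated $\odot$ and $\oplus$ (Remark~\ref{rem:nondist} flags that distributivity fails, so one must be sure the argument never silently uses it — and indeed it does not, since we only ever expand products of $0/1$-valued factors and take an $\oplus$ of terms that are pairwise ``disjoint''). Everything else is the routine evaluation check described above.
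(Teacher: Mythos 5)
Your proposal is correct and follows essentially the same route as the paper's proof: build single-point indicators $\ell_{jj}$ from Lemma~\ref{lem:interp}, take the $\odot$-product over coordinates to get point indicators on $X_m^n$, write $f$ as the $\bigoplus$ over all points of $f(a)\odot\chi_a$, and then eliminate every $\oplus$ via the identity $x\oplus y=\neg(\neg(x)\odot\neg(y))$ from Remark~\ref{rem:multi_prod}. The only difference is that you spell out the evaluation check and the non-reliance on distributivity more explicitly than the paper does.
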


\begin{proof}
Any function $f:X_m^n\to X_m$ can
be represented as
\[f(x_1,\dots,x_n)=\underset{(i_1,\dots,i_n) \in X^ n}{\bigoplus} f(i_1,\dots,i_n)\odot \ell_{i_1}(x_1)\odot \dots \odot \ell_{i_n}(x_n),\]
 where the functions $\ell_{\frac{j}{m}} =\ell_{jj}$ are defined in Lemma~\ref{lem:interp}.
It is straightforward to check that
$\ell_{i_1}(x_1)\odot \dots \odot \ell_{i_n}(x_n) = 0$ unless $(x_1, \dots, x_n)= (i_1, \dots, i_n)$, in which case it equals $1$.   

It is now enough to iteratively apply equality~\eqref{eq:oplus_as_odot} in Remark~\ref{rem:multi_prod} to get rid of the $\oplus$ operation.
\end{proof}

In fact, we can also express any function in terms of $\oplus, \neg$ and 
constant functions.
For instance, the indicators in Example~\ref{ex:f0f1} equal
\[f_0(x)=\neg(x \oplus x), \; f_1(x)=\neg(\neg(x)\oplus \neg(x)), \text{ and } f_{\frac{1}{2}}(x)=\neg(f_0(x)\oplus f_1(x)).\]
We will use in general the version
without $\oplus$ to mimic the standard choice in
the Boolean case.
On the other side, it is important to note that without multiplying by constants, Theorem~\ref{th:logic} is not true. For instance, when $m=2$, the constant function $\tfrac{1}{2}: X_2 \to X_2$ cannot be expressed in terms of  $\oplus,\odot$ and $\neg$. Indeed, by induction on the number of these operators it is easy to see that any function $f:X_2 \to X_2$ verifies that $f(0)$ equals either $0$ or $1$.

\begin{remark}\label{rem:sm}  Different functions over $\R$ can coincide on $X_m$ and there may exist more than one $\odot$-$\neg$ expression for the same function.
 \begin{enumerate}[label=(\roman*)]
\item   For instance, 
    $$ x^s =\max\{0, sx -(s-1)\} $$
    has the same  value on $X_m$ as $x^m$ for any $s \ge m$. Indeed $x^s =0$ unless $x > \frac{s-1}{s}\ge \frac {m-1}{m}$, that is, unless $x=1$ and in this case, its value is $1$ (see Figure\ref{fig:xs}).

\begin{figure}
    \centering
 \begin{minipage}{.4\textwidth}
 \centering
\begin{tikzpicture}[scale=2]
        \draw[-latex,color=darkgray,thin] (-0.2,0) -- (1.5,0) node[below] {};
        \draw[-latex,color=darkgray,thin] (0,-0.2) -- (0,1.9) node[left] {};
        \draw[dashed,color=gray] (1,1) -- (1,-0.05) node[below] {\textcolor{black}{$1$}};
        \draw[dashed,color=gray] (1,1) -- (-0.05,1) node[left] {\textcolor{black}{$1$}};
        \draw[color=gray] (0.75,0.05) -- (0.75,-0.05) node[below] {\textcolor{black}{$\frac{3}{4}$}};
        \draw[color=gray] (0.5,0.05) -- (0.5,-0.05) node[below] {\textcolor{black}{$\frac{2}{4}$}};
        \draw[color=gray] (0.25,0.05) -- (0.25,-0.05) node[below] {\textcolor{black}{$\frac{1}{4}$}};
        \draw[color=blue,very thick] (0,0) -- (0.75,0);
        \draw[color=red, thick] (0,0) -- (0.856,0);
        \draw[color=blue,very thick] plot[domain=0.75:1.15,samples=100] (\x,{4*\x-3)}) node[right] {$y=4x-3$};
        \draw[color=red, very thick] plot[domain=0.856:1.12,samples=100] (\x,{7*\x-6}) node[right] {$y=7x-6$};
 \end{tikzpicture}
\end{minipage}
\begin{minipage}{.4\textwidth}
\centering
\renewcommand{\arraystretch}{1.2}
    \begin{tabular}{|c|c|c|c|c|c|}
        \hline
        $x$ & $0$ & $\frac{1}{4}$ & $\frac{2}{4}$ & $\frac{3}{4}$ & $1$\\
        \hline
        $x^4$ & $0$ & $0$ & $0$ & $0$ & $1$\\
        \hline
        $x^7$ & $0$ & $0$ & $0$ & $0$ & $1$\\
        \hline
    \end{tabular}
\end{minipage}
    
    \caption{Comparing $x^4$ and $x^7$ when $m=4$. As real-valued functions, $\max\{0, 7x -6\}$ and $\max\{0, 4x -3\}$ are different but they coincide in $X_m= \{0,\frac{1}{4},\frac{2}{4},\frac{3}{4},1\}$.}
    \label{fig:xs}
\end{figure}

 \item   For $m=3$, the equality $x^2\odot \tfrac{1}{3}  = x\odot \frac{1}{3}$ holds for every $x\in X$. This function takes the value $1$ at $1$ and $0$ otherwise. More generally, given $m\in \N$, for every $2\leq p\leq m$ the equality $x\odot \frac{1}{m}=x^p\odot \frac{1}{m}$ holds for every $x\in X.$ 
\end{enumerate}
\end{remark}

\subsection{Motifs}\label{ssec:motifs}
We present here four simple {\em motifs}, that is, small networks that appear frequently as subnetworks of bigger ones. They are built using the multivalued logic operations for a network with $n\ge 3$  interacting biological species whose values can be described by $m+1$ values in $X_m=\{0, \frac{1}{m},\dots, \frac{m-1}{m},1\}$.

\

\noindent \textbf{\underline{Motif 1:}}
We start by describing two simple mechanisms, where node $1$ is moderately activated by node $2$. We 
show the resulting functions that describe this situation, when $m=3$.

\begin{center}
\begin{tabular}{cc}
Motif 1a: $f_1=x_1 \oplus (x_2\ominus \frac{1}{m})$ & Motif 1b:$f_1=x_1\oplus x_2^2$ \\ 
\begin{tikzpicture}[scale=2]
        \draw[-latex,color=darkgray,thin] (-0.2,0) -- (1.5,0) node[below] {$x_1$};
        \draw[-latex,color=darkgray,thin] (0,-0.2) -- (0,1.5) node[left] {$x_2$};
        \draw[color=darkgray] (0.33,0.05) -- (0.33,-0.05) node[below] {\textcolor{black}{$\frac{1}{3}$}};
        \draw[color=darkgray] (0.66,0.05) -- (0.66,-0.05) node[below] {\textcolor{black}{$\frac{2}{3}$}};
        \draw[color=darkgray] (1,0.05) -- (1,-0.05) node[below] {\textcolor{black}{$1$}};
        \draw[color=darkgray] (0.05,0.33) -- (-0.05,0.33) node[left] {\textcolor{black}{$\frac{1}{3}$}};
        \draw[color=darkgray] (0.05,0.66) -- (-0.05,0.66) node[left] {\textcolor{black}{$\frac{2}{3}$}};
        \draw[color=darkgray] (0.05,1) -- (-0.05,1) node[left] {\textcolor{black}{$1$}};
        \draw[color=darkgray] (0,0) node[below left] {$0$};
        \filldraw[gray] (0,0) circle (0.75pt) node[above right] {$0$};
        \filldraw[blue] (0.33,0) circle (0.75pt) node[above right] {$\frac{1}{3}$};
        \filldraw[magenta] (0.66,0) circle (0.75pt) node[above right] {$\frac{2}{3}$};
        \filldraw[black] (1,0) circle (0.75pt) node[above right] {$1$};
        \filldraw[gray] (0,0.33) circle (0.75pt) node[above right] {$0$};
        \filldraw[blue] (0.33,0.33) circle (0.75pt) node[above right] {$\frac{1}{3}$};
        \filldraw[magenta] (0.66,0.33) circle (0.75pt) node[above right] {$\frac{2}{3}$};
        \filldraw[black] (1,0.33) circle (0.75pt) node[above right] {$1$};
        \filldraw[blue] (0,0.66) circle (0.75pt) node[above right] {$\frac{1}{3}$};
        \filldraw[magenta] (0.33,0.66) circle (0.75pt) node[above right] {$\frac{2}{3}$};
        \filldraw[black] (0.66,0.66) circle (0.75pt) node[above right] {$1$};
        \filldraw[black] (1,0.66) circle (0.75pt) node[above right] {$1$};
        \filldraw[magenta] (0,1) circle (0.75pt) node[above right] {$\frac{2}{3}$};
        \filldraw[black] (0.33,1) circle (0.75pt) node[above right] {$1$};
        \filldraw[black] (0.66,1) circle (0.75pt) node[above right] {$1$};
        \filldraw[black] (1,1) circle (0.75pt) node[above right] {$1$};
 \end{tikzpicture} &
 \begin{tikzpicture}[scale=2]
        \draw[-latex,color=darkgray,thin] (-0.2,0) -- (1.5,0) node[below] {$x_1$};
        \draw[-latex,color=darkgray,thin] (0,-0.2) -- (0,1.5) node[left] {$x_2$};
        \draw[color=darkgray] (0.33,0.05) -- (0.33,-0.05) node[below] {\textcolor{black}{$\frac{1}{3}$}};
        \draw[color=darkgray] (0.66,0.05) -- (0.66,-0.05) node[below] {\textcolor{black}{$\frac{2}{3}$}};
        \draw[color=darkgray] (1,0.05) -- (1,-0.05) node[below] {\textcolor{black}{$1$}};
        \draw[color=darkgray] (0.05,0.33) -- (-0.05,0.33) node[left] {\textcolor{black}{$\frac{1}{3}$}};
        \draw[color=darkgray] (0.05,0.66) -- (-0.05,0.66) node[left] {\textcolor{black}{$\frac{2}{3}$}};
        \draw[color=darkgray] (0.05,1) -- (-0.05,1) node[left] {\textcolor{black}{$1$}};
        \draw[color=darkgray] (0,0) node[below left] {$0$};
        \filldraw[gray] (0,0) circle (0.75pt) node[above right] {$0$};
        \filldraw[blue] (0.33,0) circle (0.75pt) node[above right] {$\frac{1}{3}$};
        \filldraw[magenta] (0.66,0) circle (0.75pt) node[above right] {$\frac{2}{3}$};
        \filldraw[black] (1,0) circle (0.75pt) node[above right] {$1$};
        \filldraw[gray] (0,0.33) circle (0.75pt) node[above right] {$0$};
        \filldraw[blue] (0.33,0.33) circle (0.75pt) node[above right] {$\frac{1}{3}$};
        \filldraw[magenta] (0.66,0.33) circle (0.75pt) node[above right] {$\frac{2}{3}$};
        \filldraw[black] (1,0.33) circle (0.75pt) node[above right] {$1$};
        \filldraw[blue] (0,0.66) circle (0.75pt) node[above right] {$\frac{1}{3}$};
        \filldraw[magenta] (0.33,0.66) circle (0.75pt) node[above right] {$\frac{2}{3}$};
        \filldraw[black] (0.66,0.66) circle (0.75pt) node[above right] {$1$};
        \filldraw[black] (1,0.66) circle (0.75pt) node[above right] {$1$};
        \filldraw[black] (0,1) circle (0.75pt) node[above right] {$1$};
        \filldraw[black] (0.33,1) circle (0.75pt) node[above right] {$1$};
        \filldraw[black] (0.66,1) circle (0.75pt) node[above right] {$1$};
        \filldraw[black] (1,1) circle (0.75pt) node[above right] {$1$};
 \end{tikzpicture} 
\end{tabular}
\end{center}

\

\noindent \textbf{\underline{Motif 2:}}
In this motif, node $1$ depends on its previous state, node $2$ acts as an activator and node $3$ acts as a weaker activator.
\begin{align*}
  f_1 &=  x_1\oplus x_2 \oplus x_3^2\\
  &=\min\{1,x_1+x_2+\max\{0,2x_3-1\}\}.
\end{align*}

\noindent \textbf{\underline{Motif 3:}} 
In this motif, the update function of node $1$ depends on itself, an activator node $2$ and a repressor node $3$.
  \begin{align*}
  f_1 &=x_2\oplus(x_1\ominus x_3)\\
  &=x_2\oplus(x_1\odot \neg(x_3))\\
  &= \min\{1,x_2+\max\{0,x_1-x_3\}\}.
  \end{align*}

\noindent \textbf{\underline{Motif 4:}}
We finally introduce a mechanism that considers the outcome of several activators and inhibitors acting simultaneously on a single node. We propose the following motif that describes the total effect on node $\ell$ produced by activators $i\in A$ and inhibitors $j\in B$, with $A,B\subseteq \{1,\dots,n\}$, $A\cap B=\emptyset$:
$f_\ell =  \oplus_{i\in A} x_i \ominus \left(\oplus_{j\in B} x_j\right)$,
or more generally, given positive weights $w_k\in \N$:

\begin{equation*}
 f_\ell \, = \,  \underset{i\in A}{\oplus} w_i x_i \ominus \left(\underset{j\in B}{\oplus} w_j x_j\right) \, = \, \max\left\{0,\min\left\{1,\underset{i\in A}{\sum} w_ix_i \right\}-\min\left\{1,\underset{j\in B}{\sum} w_j x_j \right\}\right\}.
\end{equation*}

Note that if $\underset{j\in B}{\sum} w_j x_j \ge 1$ (i.e. when the effect of the inhibitors is big enough), then $f_\ell=0$. And if $\underset{i\in A}{\sum}w_ix_i \le 1$ and $\underset{j\in B}{\sum}w_jx_j\le 1$, then  $f_\ell>0$ if and only if $\underset{i\in A}{\sum}w_ix_i>\underset{j\in B}{\sum}w_j x_j$. This mimics Boolean threshold functions. For instance, we show three of these functions for $m=3$:
\smallskip

\begin{center}
\begin{tabular}{ccc}
$f_\ell=x_1\ominus x_2$ & $f_\ell=2x_1\ominus x_2$ & $f_\ell=x_1\ominus 2x_2$\\ 
\begin{tikzpicture}[scale=2]
        \draw[-latex,color=darkgray,thin] (-0.2,0) -- (1.5,0) node[below] {$x_1$};
        \draw[-latex,color=darkgray,thin] (0,-0.2) -- (0,1.5) node[left] {$x_2$};
        \draw[color=darkgray] (0.33,0.05) -- (0.33,-0.05) node[below] {\textcolor{black}{$\frac{1}{3}$}};
        \draw[color=darkgray] (0.66,0.05) -- (0.66,-0.05) node[below] {\textcolor{black}{$\frac{2}{3}$}};
        \draw[color=darkgray] (1,0.05) -- (1,-0.05) node[below] {\textcolor{black}{$1$}};
        \draw[color=darkgray] (0.05,0.33) -- (-0.05,0.33) node[left] {\textcolor{black}{$\frac{1}{3}$}};
        \draw[color=darkgray] (0.05,0.66) -- (-0.05,0.66) node[left] {\textcolor{black}{$\frac{2}{3}$}};
        \draw[color=darkgray] (0.05,1) -- (-0.05,1) node[left] {\textcolor{black}{$1$}};
        \draw[color=darkgray] (0,0) node[below left] {$0$};
        \filldraw[gray] (0,0) circle (0.75pt) node[above right] {$0$};
        \filldraw[blue] (0.33,0) circle (0.75pt) node[above right] {$\frac{1}{3}$};
        \filldraw[magenta] (0.66,0) circle (0.75pt) node[above right] {$\frac{2}{3}$};
        \filldraw[black] (1,0) circle (0.75pt) node[above right] {$1$};
        \filldraw[gray] (0,0.33) circle (0.75pt) node[above right] {$0$};
        \filldraw[gray] (0.33,0.33) circle (0.75pt) node[above right] {$0$};
        \filldraw[blue] (0.66,0.33) circle (0.75pt) node[above right] {$\frac{1}{3}$};
        \filldraw[magenta] (1,0.33) circle (0.75pt) node[above right] {$\frac{2}{3}$};
        \filldraw[gray] (0,0.66) circle (0.75pt) node[above right] {$0$};
        \filldraw[gray] (0.33,0.66) circle (0.75pt) node[above right] {$0$};
        \filldraw[gray] (0.66,0.66) circle (0.75pt) node[above right] {$0$};
        \filldraw[blue] (1,0.66) circle (0.75pt) node[above right] {$\frac{1}{3}$};
        \filldraw[gray] (0,1) circle (0.75pt) node[above right] {$0$};
        \filldraw[gray] (0.33,1) circle (0.75pt) node[above right] {$0$};
        \filldraw[gray] (0.66,1) circle (0.75pt) node[above right] {$0$};
        \filldraw[gray] (1,1) circle (0.75pt) node[above right] {$0$};
 \end{tikzpicture} &
 \begin{tikzpicture}[scale=2]
        \draw[-latex,color=darkgray,thin] (-0.2,0) -- (1.5,0) node[below] {$x_1$};
        \draw[-latex,color=darkgray,thin] (0,-0.2) -- (0,1.5) node[left] {$x_2$};
        \draw[color=darkgray] (0.33,0.05) -- (0.33,-0.05) node[below] {\textcolor{black}{$\frac{1}{3}$}};
        \draw[color=darkgray] (0.66,0.05) -- (0.66,-0.05) node[below] {\textcolor{black}{$\frac{2}{3}$}};
        \draw[color=darkgray] (1,0.05) -- (1,-0.05) node[below] {\textcolor{black}{$1$}};
        \draw[color=darkgray] (0.05,0.33) -- (-0.05,0.33) node[left] {\textcolor{black}{$\frac{1}{3}$}};
        \draw[color=darkgray] (0.05,0.66) -- (-0.05,0.66) node[left] {\textcolor{black}{$\frac{2}{3}$}};
        \draw[color=darkgray] (0.05,1) -- (-0.05,1) node[left] {\textcolor{black}{$1$}};
        \draw[color=darkgray] (0,0) node[below left] {$0$};
        \filldraw[gray] (0,0) circle (0.75pt) node[above right] {$0$};
        \filldraw[magenta] (0.33,0) circle (0.75pt) node[above right] {$\frac{2}{3}$};
        \filldraw[black] (0.66,0) circle (0.75pt) node[above right] {$1$};
        \filldraw[black] (1,0) circle (0.75pt) node[above right] {$1$};
        \filldraw[gray] (0,0.33) circle (0.75pt) node[above right] {$0$};
        \filldraw[blue] (0.33,0.33) circle (0.75pt) node[above right] {$\frac{1}{3}$};
        \filldraw[magenta] (0.66,0.33) circle (0.75pt) node[above right] {$\frac{2}{3}$};
        \filldraw[magenta] (1,0.33) circle (0.75pt) node[above right] {$\frac{2}{3}$};
        \filldraw[gray] (0,0.66) circle (0.75pt) node[above right] {$0$};
        \filldraw[gray] (0.33,0.66) circle (0.75pt) node[above right] {$0$};
        \filldraw[blue] (0.66,0.66) circle (0.75pt) node[above right] {$\frac{1}{3}$};
        \filldraw[blue] (1,0.66) circle (0.75pt) node[above right] {$\frac{1}{3}$};
        \filldraw[gray] (0,1) circle (0.75pt) node[above right] {$0$};
        \filldraw[gray] (0.33,1) circle (0.75pt) node[above right] {$0$};
        \filldraw[gray] (0.66,1) circle (0.75pt) node[above right] {$0$};
        \filldraw[gray] (1,1) circle (0.75pt) node[above right] {$0$};
 \end{tikzpicture} &
 \begin{tikzpicture}[scale=2]
        \draw[-latex,color=darkgray,thin] (-0.2,0) -- (1.5,0) node[below] {$x_1$};
        \draw[-latex,color=darkgray,thin] (0,-0.2) -- (0,1.5) node[left] {$x_2$};
        \draw[color=darkgray] (0.33,0.05) -- (0.33,-0.05) node[below] {\textcolor{black}{$\frac{1}{3}$}};
        \draw[color=darkgray] (0.66,0.05) -- (0.66,-0.05) node[below] {\textcolor{black}{$\frac{2}{3}$}};
        \draw[color=darkgray] (1,0.05) -- (1,-0.05) node[below] {\textcolor{black}{$1$}};
        \draw[color=darkgray] (0.05,0.33) -- (-0.05,0.33) node[left] {\textcolor{black}{$\frac{1}{3}$}};
        \draw[color=darkgray] (0.05,0.66) -- (-0.05,0.66) node[left] {\textcolor{black}{$\frac{2}{3}$}};
        \draw[color=darkgray] (0.05,1) -- (-0.05,1) node[left] {\textcolor{black}{$1$}};
        \draw[color=darkgray] (0,0) node[below left] {$0$};
        \filldraw[gray] (0,0) circle (0.75pt) node[above right] {$0$};
        \filldraw[blue] (0.33,0) circle (0.75pt) node[above right] {$\frac{1}{3}$};
        \filldraw[magenta] (0.66,0) circle (0.75pt) node[above right] {$\frac{2}{3}$};
        \filldraw[black] (1,0) circle (0.75pt) node[above right] {$1$};
        \filldraw[gray] (0,0.33) circle (0.75pt) node[above right] {$0$};
        \filldraw[gray] (0.33,0.33) circle (0.75pt) node[above right] {$0$};
        \filldraw[gray] (0.66,0.33) circle (0.75pt) node[above right] {$0$};
        \filldraw[blue] (1,0.33) circle (0.75pt) node[above right] {$\frac{1}{3}$};
        \filldraw[gray] (0,0.66) circle (0.75pt) node[above right] {$0$};
        \filldraw[gray] (0.33,0.66) circle (0.75pt) node[above right] {$0$};
        \filldraw[gray] (0.66,0.66) circle (0.75pt) node[above right] {$0$};
        \filldraw[gray] (1,0.66) circle (0.75pt) node[above right] {$0$};
        \filldraw[gray] (0,1) circle (0.75pt) node[above right] {$0$};
        \filldraw[gray] (0.33,1) circle (0.75pt) node[above right] {$0$};
        \filldraw[gray] (0.66,1) circle (0.75pt) node[above right] {$0$};
        \filldraw[gray] (1,1) circle (0.75pt) node[above right] {$0$};
 \end{tikzpicture}
\end{tabular}
\end{center}

\subsection{Fixed points} \label{ssec:fix}

When studying the dynamics of biological systems one important aspect to be considered is the long-term behavior of the system, in particular the study of the equilibria of the dynamical system. In the case of continuous-time biological systems, it is common to study the \emph{steady states} of the system. In the discrete-time context, the steady states  are called \emph{fixed points} of the system.  They are also called stable states. If moreover there is a finite number of states for every node and the dynamical system is given by $F:X_m^{n}\rightarrow X_m^{n}$,  the fixed points are the points $x\in X_m^{n}$ such that $F(x)=x$ (i.e. $f_i(x)=x_i$ for all $i\in \{1,\dots, n\}$). These points will be our main focus in the subsequent sections.

\

It is actually a sensible question to look for the fixed points of a function $F:X_m^n\to X_m^n$ with $X_m$ a finite set since most of these functions have at least one. In fact, by counting the number of these functions without fixed points the following result is immediate:

\begin{proposition}\label{prop:q}
 Set $q=(m+1)^n$, $\#X=m+1$. The proportion of $F:X_m^n\to X_m^n$ with at least one fixed point equals  
 \[
  \frac{q^q-(q-1)^q}{q^q}=1-\left(\frac{q-1}{q}\right)^q.
 \]
So, when $n\to +\infty$ or $m\to +\infty$ this proportion is close to $1-\frac{1}{e}\sim 0.6321$. 
\end{proposition}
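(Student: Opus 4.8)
The plan is to count functions $F:X_m^n \to X_m^n$ directly. Write $q = (m+1)^n$ for the cardinality of the domain $X_m^n$, which is also the cardinality of the codomain. A function $F$ is specified by choosing, for each of the $q$ points $x \in X_m^n$, a value $F(x) \in X_m^n$, so there are exactly $q^q$ functions in total. This is the denominator.

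\smallskip

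Next I would count the functions with \emph{no} fixed point. For each point $x \in X_m^n$, the condition $F(x) \neq x$ forbids exactly one of the $q$ possible values of $F(x)$, leaving $q-1$ admissible choices; since the constraints at distinct points are independent, the number of fixed-point-free functions is $(q-1)^q$. Hence the number of functions with at least one fixed point is $q^q - (q-1)^q$, and the proportion is
\[
\frac{q^q - (q-1)^q}{q^q} \; = \; 1 - \left(\frac{q-1}{q}\right)^q.
\]

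\smallskip

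Finally, for the asymptotic statement, note that both $n \to +\infty$ and $m \to +\infty$ force $q = (m+1)^n \to +\infty$, so it suffices to take the limit of $1 - \left(\frac{q-1}{q}\right)^q = 1 - \left(1 - \tfrac1q\right)^q$ as $q \to +\infty$. The standard limit $\left(1 - \tfrac1q\right)^q \to e^{-1}$ then gives that the proportion tends to $1 - \frac1e \approx 0.6321$.

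\smallskip

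There is essentially no obstacle here: the only point requiring a sentence of care is the independence of the constraints ``$F(x)\neq x$'' across distinct domain points $x$, which is what makes the count of fixed-point-free functions a clean product $(q-1)^q$ rather than requiring inclusion–exclusion; and the observation that the two stated limiting regimes ($n\to\infty$ or $m\to\infty$) both reduce to the single regime $q\to\infty$.
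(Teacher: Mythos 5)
Your proof is correct and takes exactly the same approach as the paper, which simply notes that there are $q^q$ functions in total and $(q-1)^q$ without any fixed point; you have merely spelled out the independence of the constraints and the limit $\left(1-\tfrac1q\right)^q \to e^{-1}$, both of which the paper leaves implicit.
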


The proof of Proposition~\ref{prop:q} is straightforward since there are $q^q$ possible functions
$F$ and there are $(q-1)^q$ functions without any fixed point.

\medskip

We describe below how we can make a function {\em smoother} in our discrete context without altering its fixed points. 

Following~\cite{chifman} and its references, we  introduce a function to {\em preserve continuity}, which means in our setting that each node will change at most $\frac{1}{m}$ in one time step. These {\em unitary networks} were introduced in~\cite{Schaub} with the idea that they better capture the continuity of biological interactions. For this purpose, we take into account the previous state of the corresponding node (we add a self-regulation loop to each network node). The future value of the regulated variable under continuity is computed as follows: 

\begin{definition} \label{def:cont}
Given $X_m=\{0,\frac{1}{m},\dots,1\}$ we define the function $h:X_m^2\to X_m$ as
\begin{equation}\label{eq:h_cont}
 h(x,y)=\begin{cases}
          x\oplus \frac{1}{m} & \text{if } y>x\\
          x \qquad & \text{if } y=x\\
          x\ominus \frac{1}{m} & \text{if } y<x.
         \end{cases}
\end{equation}
Given $F = (f_1, \dots, f_n): X_m^n \to X_m^n$, the continuous version of each coordinate function $f_i: X_m^n \to X_m$  is denoted by $f_{i,cont}$ and is defined as 
\begin{equation}\label{eq:fcont}
  f_{i,cont}(x)=h(x_i,f_i(x)), 
\end{equation}
for $i=1,\dots, n$.
We denote $F_{cont}=(f_{1, cont},\dots, f_{n, cont})$.
\end{definition}

The following lemma is an immediate consequence of the previous definition:

\begin{lemma}\label{lem:continuity}
A function $F: X_m^n \to X_m^n$ and its associated function $F_{cont}$ in Definition~\ref{def:cont}
have the same fixed points. 
\end{lemma}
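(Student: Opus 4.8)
The plan is to show directly that $x \in X_m^n$ satisfies $F(x) = x$ if and only if it satisfies $F_{cont}(x) = x$, working coordinate by coordinate. Since both $F$ and $F_{cont}$ have the same domain and codomain, it suffices to prove that for each $i \in \{1, \dots, n\}$ and each $x \in X_m^n$, we have $f_{i,cont}(x) = x_i$ if and only if $f_i(x) = x_i$.

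First I would unwind the definition: by \eqref{eq:fcont}, $f_{i,cont}(x) = h(x_i, f_i(x))$, so the claim reduces to a statement purely about the auxiliary function $h$ of Definition~\ref{def:cont}, namely that for all $a, b \in X_m$ one has $h(a,b) = a$ if and only if $b = a$. The ``if'' direction is immediate from the middle case of \eqref{eq:h_cont}. For the ``only if'' direction, I would argue by contrapositive on the two remaining cases. If $b > a$, then $h(a,b) = a \oplus \frac{1}{m} = \min\{1, a + \frac{1}{m}\}$; since $a \in X_m$ and $a < b \le 1$ forces $a \le 1 - \frac{1}{m}$, we get $a + \frac{1}{m} \le 1$, hence $h(a,b) = a + \frac{1}{m} \ne a$. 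Symmetrically, if $b < a$ then $h(a,b) = a \ominus \frac{1}{m} = \max\{0, a - \frac{1}{m}\}$; since $a > b \ge 0$ forces $a \ge \frac{1}{m}$, we get $a - \frac{1}{m} \ge 0$, hence $h(a,b) = a - \frac{1}{m} \ne a$. In either case $h(a,b) \ne a$, which completes the equivalence.

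Applying this with $a = x_i$ and $b = f_i(x)$ yields that $f_{i,cont}(x) = x_i \iff f_i(x) = x_i$ for every $i$ and every $x$, and intersecting over $i$ gives $F_{cont}(x) = x \iff F(x) = x$. I do not anticipate a genuine obstacle here; the only point requiring a little care is making explicit why $a < b$ in $X_m$ implies $a + \frac{1}{m} \le 1$ (so that the $\oplus$ does not truncate), and dually for the subtraction case — this is exactly where the hypothesis that we are in the discrete grid $X_m$, rather than the continuum $[0,1]$, is used. Everything else is direct case analysis on the definition of $h$.
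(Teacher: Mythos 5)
Your proof is correct and follows exactly the argument the paper has in mind: the paper states the lemma as ``an immediate consequence of the previous definition'' and gives no further detail, and your coordinatewise case analysis on $h$ (with the observation that discreteness of $X_m$ prevents the $\oplus$ and $\ominus$ from truncating) is simply that immediate argument written out in full.
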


For any $m$, the continuous version of  all the powers $x^k$ is the same function, independently of $k$.
\begin{lemma}
 Given $k\in \N_{\geq 2}$, then 
 \begin{equation}x^k_{cont}=\begin{cases}
          x\ominus \frac{1}{m} & {\rm if \, }\, x<1\\
          1  & {\rm if \,}\, x=1.\\
         \end{cases}\end{equation}
\end{lemma}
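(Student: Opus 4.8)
The plan is to unwind the definitions and split into the two cases $x<1$ and $x=1$. First I would recall that by Definition~\ref{def:cont}, $x^k_{cont} = h(x, x^k)$, where $h$ is the function in~\eqref{eq:h_cont}. So everything comes down to comparing the value $x^k$ with $x$ and applying the three-way rule defining $h$.

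The key step is Remark~\ref{rem:menor}, which tells us that $x^k \le x$ for all $x \in X_m$ and all $k \in \N$, with equality if and only if $x \in \{0,1\}$ or $k=1$. Since we assume $k \ge 2$, this gives: if $x = 1$, then $x^k = 1 = x$, so we are in the middle branch $y = x$ of~\eqref{eq:h_cont} and $h(1, 1) = 1$, matching the claimed value. If $x = 0$, then $x^k = 0 = x$ as well, so $h(0,0) = 0 = 0 \ominus \frac{1}{m}$, which agrees with the first branch of the claimed formula (since $0 \ominus \frac1m = \max\{0, -\frac1m\} = 0$). If $0 < x < 1$, then the strict inequality $x^k < x$ holds by Remark~\ref{rem:menor}, so we are in the third branch $y < x$ and $h(x, x^k) = x \ominus \frac{1}{m}$, again matching. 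Collecting the three sub-cases $x=0$, $0<x<1$, and $x=1$ yields exactly the piecewise formula in the statement.

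I do not expect any serious obstacle here; the only thing to be slightly careful about is that the claimed formula groups $x=0$ together with $0<x<1$ into the single case $x<1$, so one should check that the boundary value $x=0$ really does give $x \ominus \frac1m = 0$ under the convention~\eqref{eq:minus} that $\ominus$ truncates at $0$ — which it does. One might also remark that this uniformity across $k$ is precisely the discrete analogue of the observation in Remark~\ref{rem:sm} that different powers $x^s$ coincide as functions on $X_m$, reinforcing why the continuous version does not see $k$.
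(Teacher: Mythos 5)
Your proof is correct and follows essentially the same route as the paper, which simply observes that $x \geq x^k$ with equality exactly when $x \in \{0,1\}$ (for $k \geq 2$) and leaves the case analysis over the branches of $h$ implicit. Your version just spells out that case analysis, including the harmless boundary check that $0 \ominus \tfrac{1}{m} = 0$.
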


\begin{proof}
It is straightforward to  check that         
$x\geq x^k$  for any $x \in X$ and that the equality holds if only if $x=0$ or $x=1$.
\end{proof}

\section{From any vector function to a
\texorpdfstring {$\odot$-$\neg$}{dot-neg} network function}\label{sec:dotneg}

From now on, we will focus on dynamical systems over the set $X_m~=~\{0,\tfrac{1}{m}, \tfrac{2}{m}, \dots, \tfrac{m-1}{m}, 1\}$. Inspired by the framework in~\cite{VC}, we  propose a novel method to compute the fixed points of a network. For this, we now introduce the notion of $\odot$-$\neg$ functions. Algorithm~1 can be used to translate an input network into one given by $\odot$-$\neg$ functions,  and we show in Theorem~\ref{thm:DN} how they can be used to compute the steady states of the original network.

\smallskip

We  mimic a procedure which has been proved useful in Boolean networks. Recall that every Boolean function $f$ can be written in conjunctive normal form as 
 $$f=w_1\wedge \dots \wedge w_r$$ where 
 $$w_j=y_{i_1}\vee \dots \vee y_{i_{n_j}}$$ 
 for $j\in \{1,\dots,r\}$,  $\{i_1,\dots, i_{n_{j}}\}\subseteq \{1,\dots, n\}$ and $y_{k} \in \{ x_k, \neg(x_k)\}$.
This gives the idea of AND-NOT functions: a vector function $F=(f_1,\dots,f_n):X_m^n \to X_m^n$ such that every $f_i:X_m \to X_m$ is an AND-NOT function, has the same information as its wiring diagram~\cite{VC, VCLA}. 
In order to obtain similar properties in our context, we define what we call $\odot$-$\neg$ functions.

\begin{definition}\label{def:dotneg}
  A function $f:X_m^n\longrightarrow X_m$ is called  a $\odot$-$\neg$ function if it can be written as 
  $$f(x_1,\dots,x_n)=\bigodot_{j\in A}x_j^{p_{j}} \odot\bigodot_{j\in B}\neg(x_j)^{q_{j}}\bigodot \frac c m$$
  with $c\in X_m$, $A\cup B\subseteq \{1,\dots,n\}$ and $\{p_{j},q_{j}\}\subseteq \N$ .

  A vector function $F=(f_1,\dots,f_n):X_m^n\longrightarrow X_m^n$ is called an $\odot$-$\neg$ network function if $f_i$ is a  $\odot$-$\neg$ function for all $i \in \{1,\dots, n\}$.
\end{definition}

The most important feature of $\odot$-$\neg$ networks is that there is a direct correspondence between their wiring diagram and the network functions. All the information about the network dynamics can be read from the wiring diagram (and vice versa), which is defined by a labeled graph $G=(V_G,E_G)$ with vertices 
$$V_G=\left\{1,\dots,n,C\right\}$$ 
($\{1,\dots,n\}$ may also be denoted by $\{x_1,\dots,x_n\}$) and edges $E_G$ given as follows. If $F=(f_1,\dots,f_n)$ and 
$$f_k(x_1,\dots,x_n)=\bigodot_{i\in A_k}x_i^{p_{k_i}} \odot\bigodot_{j\in B_k}\neg(x_j)^{q_{k_j}}\bigodot \frac {c_k} m,$$ then there exist labeled edges of the form 

\begin{equation*}
\xymatrix{x_i\ar@{->}[r]^{p_{k_i}}&x_k} \, \text{if} \, p_{k_i}> 0;\quad
\xymatrix{x_j\ar@{-|}[r]^{q_{k_j}}& x_k} \, \text{if} \, q_{k_j}> 0; \quad
\xymatrix{C\ar@{->}[r]^{\frac{c_k} m}&x_k }.
\end{equation*}

Not every function is a  $\odot$-$\neg$ function. Consider for instance $f_1=\neg(x_i\odot x_j)$ for any $i, j$.

\begin{example} \label{ex:w} 
Given 
 $F(x_1,x_2,x_3)= (x_2^2\odot \neg (x_3)^q, x_1,\tfrac{1}{2}\odot x_1^p)$, the  associated wiring diagram is as follows:
\[
\xymatrix{
&C\ar[ld]_{\frac{1}{2}}
\ar@{-->}[d]^1\ar@{-->}[rd]^1
& \\ 
x_3\ar@{-|}[r]^q& x_1\ar@{->}@/_0.7pc/[r]_1\ar@{->}@/^0.7pc/[l]^p&x_2\ar@{->}[l]_2}
\]
The  dashed edge can be skipped as $1\odot x=x$ for every $x$. Also, in a general wiring diagram, if there is an edge $C \overset{0}{\to} x_i$, we can delete all the other edges with end point $x_i$
because $f_i$ is then identically zero.
\end{example}

\begin{proposition} \label{prop:c}
Given  a $\odot$-$\neg$ function, $f\not\equiv 0$, written as in Definition \ref{def:dotneg}, the sets $A,B$ can be chosen such that $A\cap B=\emptyset$, $c\neq 0$ and $1\leq p_j, q_j\leq c$.
Moreover, this bound in the exponents is sharp. 
\end{proposition}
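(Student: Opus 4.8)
The plan is to normalize an arbitrary writing of $f$ as in Definition~\ref{def:dotneg} in two passes, and then exhibit an explicit witness for sharpness. In the first pass I would discard every factor whose exponent is $0$ (since $x_j^{0}=1$ is neutral for $\odot$), so that all surviving $p_j,q_j$ are $\ge 1$. If the constant were $c=0$ then $\tfrac cm=0$ and $0\odot(\cdot)=0$ would give $f\equiv 0$, so $c\neq 0$. If some index $k$ lay in $A\cap B$, the product would contain $x_k^{p_k}\odot\neg(x_k)^{q_k}$; since $x_k^{p_k}\le x_k$ and $\neg(x_k)^{q_k}\le\neg(x_k)=1-x_k$ by Remark~\ref{rem:menor}, their sum is $\le 1$, and the characterization of when a $\odot$-product is $0$ (Proposition~\ref{prop:oper}) makes this factor vanish, forcing $f\equiv 0$, a contradiction. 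Hence $A\cap B=\emptyset$, and after this pass I may assume $A,B$ disjoint, $p_j,q_j\ge 1$, and $1\le c\le m$.

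The technical core is the second pass. Expanding each power as a repeated $\odot$ and applying the formula of Proposition~\ref{prop:oper} for a product of several terms to the whole expression (the constant factor $\tfrac cm$ included), and then substituting $\neg(x)=1-x$, one rewrites $f$ in the single-maximum form
\[
f(x)=\max\Bigl\{\,0,\ \tfrac cm-\sum_{j\in A}p_j(1-x_j)-\sum_{j\in B}q_jx_j\,\Bigr\}.
\]
Now I would show that, for $j_0\in A$ with $p_{j_0}>c$, replacing $p_{j_0}$ by $c$ leaves $f$ unchanged on $X_m^n$: if $x_{j_0}=1$ the $j_0$-summand is $0$ for either exponent, while if $x_{j_0}<1$ then $1-x_{j_0}\ge\tfrac1m$, so both $p_{j_0}(1-x_{j_0})$ and $c(1-x_{j_0})$ are $\ge\tfrac cm$, which makes the bracket $\le 0$ and $f(x)=0$ in both cases. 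Iterating this over all oversized $p_j$, and running the symmetric argument (comparing $x_{j_0}=0$ against $x_{j_0}\ge\tfrac1m$ for the terms $q_jx_j$) over all oversized $q_j$, replaces each exponent by $\min(p_j,c)$, resp.\ $\min(q_j,c)$, which lie in $\{1,\dots,c\}$. I expect this pass to be the delicate point, since it is exactly where the piecewise-linear shape of $\oplus,\odot$ is used; getting to the single-maximum form and then the clean $x_{j_0}=1$ versus $x_{j_0}<1$ dichotomy is the crux.

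For sharpness I would exhibit, for each integer $c$ with $1\le c\le m$, the one-variable function $f(x)=x^{c}\odot\tfrac cm$, a $\odot$-$\neg$ function with constant $\tfrac cm$ and a single exponent equal to $c$. By the formula above $f(1)=\tfrac cm\neq 0$ and $f(x)=0$ for every $x<1$, so $f\not\equiv 0$ and is nonconstant. Given any $\odot$-$\neg$ representation of $f$, normalized as in the two passes, its index sets are contained in $\{1\}$; they cannot both be empty (nonconstancy), and $1$ cannot lie in $B$ (otherwise $f(1)=\neg(1)^{q}\odot\tfrac{c'}{m}=0$); hence $f(x)=x^{p}\odot\tfrac{c'}{m}$. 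Evaluating at $x=1$ forces $c'=c$, and evaluating at $x=\tfrac{m-1}{m}$ forces $\max\{0,\tfrac{c-p}{m}\}=0$, i.e.\ $p\ge c$. Thus the bound $p_j\le c$ cannot be lowered; applying $\neg$ (equivalently, using $\neg(x)^{c}\odot\tfrac cm$, which is supported on $\{0\}$) gives the same conclusion for the $q_j$, so the stated bound is sharp.
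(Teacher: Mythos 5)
Your proof is correct, and it reaches the same underlying inequality as the paper's, but it organizes the computation differently. The paper specializes all other coordinates (setting $x_i=1$ for $i\in A\setminus\{\ell\}$ and $x_j=0$ for $j\in B$) and reduces the exponent bound to the one-variable identity $x_\ell^{p}\odot\tfrac cm=x_\ell^{c}\odot\tfrac cm$ for $p\ge c$, which it verifies via the threshold $p<\tfrac cm\cdot\tfrac1{1-x_\ell}$; commutativity and associativity of $\odot$ then let it cap each exponent inside the full product. You instead pass first to the global single-maximum form $f(x)=\max\bigl\{0,\tfrac cm-\sum_{j\in A}p_j(1-x_j)-\sum_{j\in B}q_jx_j\bigr\}$ via Proposition~\ref{prop:oper}\ref{it:several_products} and cap the exponents termwise there; this is exactly the affine representation the paper only establishes later in Theorem~\ref{thm:ell}, so your route proves a stronger structural fact en passant (with no circularity, since that formula needs only Proposition~\ref{prop:oper}, not this proposition). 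The two arguments hinge on the same estimate, $c(1-x_{j_0})\ge\tfrac cm$ whenever $x_{j_0}<1$, equivalently the paper's $\tfrac{c}{m-c'}\le c\le p$. Your sharpness argument is also slightly stronger than the paper's: the paper merely exhibits $(\tfrac{m-1}{m})^{c-1}\odot\tfrac cm\ne(\tfrac{m-1}{m})^{c}\odot\tfrac cm$, whereas you show that every normalized representation of $x^{c}\odot\tfrac cm$ must carry an exponent at least $c$; both suffice.
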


\begin{proof}
First note that  if $A\cap B\neq\emptyset$ then $f=0$ since $x \odot \neg (x) =0$ for any $x$ by ~\eqref{eq:minus}. 
 If $A \neq \emptyset$, consider $\ell\in A$ and $x\in X_m^n$ with $x_i = 1$ for all $i\in A$, $i\neq \ell$, and $x_j = 0$ for all $j\in B$. Then, $f(x) = x_\ell^{p_{\ell,k}} \odot \frac{c}{m}$.
We then need to see that if $p \geq c$, then
\begin{equation}\label{eq:xell}
  x_\ell^p \odot \frac{c}{m} = x_\ell^c \odot \frac{c}{m}.
\end{equation}
    If $x_\ell = 1$ or $x_\ell=0$ it is immediate to see that the equality holds. If $x_\ell \in X_m \backslash \{0, 1\}$ we next prove that both sides of the equation equal zero. Since $x_\ell^p \odot \frac{c}{m} = \max\{0,px+\frac{c}{m}-p\}$, we have:
    \[
        x_\ell^p \odot \frac{c}{m} >0 \Leftrightarrow p < \frac{c}{m} \frac{1}{(1 - x_\ell)}.
    \]
    As $x_\ell\in X_m \backslash \{0, 1\}$ then  $x_\ell = \frac{c'}{m}$ with $1\leq c' \leq m-1$. Then
    \[
        \frac{c}{m} \frac{1}{(1 - x_\ell)}=\frac{c}{m} \frac{1}{(1 - \frac{c'}{m})}=\frac{c}{m-c'}\leq c \leq  p,
    \]
    and this means $x_\ell^p \odot \frac{c}{m} = x_\ell^c \odot \frac{c}{m}=0$.  Analogously,~\eqref{eq:xell} is also true if we replace $x_\ell$ by $\neg(x_\ell)$.

    To see that the bound $p_{j}, q_j \le c$ is sharp, it is enough to note that $(\tfrac{m - 1}{m})^{c-1} \odot \frac c m = \frac 1 m \neq 0 = (\tfrac{m - 1}{m})^{c} \odot \frac c m $.
\end{proof}

\medskip

In fact, the following uniqueness result holds, giving a canonical form to write a $\odot$-$\neg$ function.

\begin{theorem}\label{th:unique}
Given a $\odot$-$\neg$ function $f: X_m^n \to X_m$
  \begin{equation} \label{eq:fx}
  f(x)=\bigodot_{j\in A}x_j^{p_{j}} \odot\bigodot_{j\in B}\neg(x_j)^{q_{j}}\bigodot \frac c m,
  \end{equation}
  with $c\in \{0, \dots, m\}$, $A\cup B\subseteq \{1,\dots,n\}$ and $\{p_{j},q_{j}\}\subseteq \N$,
  then either $f \equiv 0$ and we write $f= \frac 0 m$, or we can choose $A$ and $B$ disjoint, and we have that
    \begin{equation} \label{eq:min}
    f(x)=\bigodot_{j\in A}x_j^{\min\{p_{j},c\}} \odot\bigodot_{j\in B}\neg(x_j)^{\min\{q_{j},c\}}\bigodot \frac c m.
    \end{equation}
Moreover, this expression is unique.
\end{theorem}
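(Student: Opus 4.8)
The plan is to first establish the reduction formula~\eqref{eq:min}, and then prove uniqueness by a separating-evaluations argument. For the reduction, I would combine two facts already available. First, by~\eqref{eq:minus}, if $A\cap B\neq\emptyset$ then $x_j\odot\neg(x_j)=0$ for $j$ in the intersection, so $f\equiv 0$ and we may write $f=\frac 0 m$; henceforth assume $A\cap B=\emptyset$. Second, the identity~\eqref{eq:xell} established in the proof of Proposition~\ref{prop:c} says $x_\ell^p\odot\frac c m=x_\ell^{c}\odot\frac c m$ whenever $p\ge c$, and the same holds with $x_\ell$ replaced by $\neg(x_\ell)$. I would apply this factor by factor: since $\odot$ is associative and commutative (Proposition~\ref{prop:oper}(ii)), I can isolate any single factor $x_j^{p_j}$ (resp. $\neg(x_j)^{q_j}$) together with $\frac c m$ and the remaining product, evaluate on the worst-case input making all other variables equal to $1$ (those in $A$) or $0$ (those in $B$) as in the proof of Proposition~\ref{prop:c}, and conclude that replacing $p_j$ by $\min\{p_j,c\}$ does not change $f$. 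Care must be taken that the replacement is valid not just on that one witness input but identically: this follows because for a fixed value of $x_j$, the map $(\text{rest})\mapsto x_j^{p_j}\odot\frac c m\odot(\text{rest})$ depends on $x_j^{p_j}$ only through whether it is $0$, $1$, or some intermediate value, and~\eqref{eq:xell} shows $x_j^{p_j}$ and $x_j^{\min\{p_j,c\}}$ agree pointwise on $X_m$. Iterating over all $j\in A\cup B$ yields~\eqref{eq:min}.

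\textbf{Uniqueness.} For this I would argue that the data $(A,B,(\min\{p_j,c\})_{j\in A},(\min\{q_j,c\})_{j\in B},c)$ — equivalently, a $\odot$-$\neg$ expression in which $A,B$ are disjoint and all exponents lie in $\{1,\dots,c\}$ with $c\in\{1,\dots,m\}$ — is recoverable from the function $f$ itself. First, $c$ is recovered as $m\cdot\max_{x\in X_m^n}f(x)$, since the product of all the $x_j$-type factors can be made equal to $1$ by choosing $x_j=1$ for $j\in A$ and $x_j=0$ for $j\in B$, and no choice makes it exceed $1$; this also shows $f\not\equiv 0$ forces $c\ge 1$. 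Next, fix such a maximizing base point and vary one coordinate at a time. For $j\in A$: setting all other variables to their maximizing values and letting $x_j=t$ range over $X_m$, we get $f=t^{p_j}\odot\frac c m=\max\{0,p_j t+\frac c m-p_j\}$, a function of $t$ that is identically $0$ iff $j\notin A$, and otherwise determines $p_j$ (with $p_j\le c$) as the unique exponent in $\{1,\dots,c\}$ for which $t^{p_j}\odot\frac c m$ has the observed zero set / slope. Symmetrically, replacing the maximizing value of $x_j$ at a coordinate and testing $\neg(x_j)=t$ detects membership in $B$ and recovers $q_j$. Since membership in $A$ versus $B$ is distinguished by which of these two univariate restrictions is non-constant (and $A\cap B=\emptyset$ rules out both), every piece of the canonical data is a function of $f$ alone, hence the expression~\eqref{eq:min} is unique.

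\textbf{Main obstacle.} The routine part is the reduction formula; the delicate point is making the uniqueness argument airtight, specifically verifying that for $1\le p<p'\le c$ the single-variable functions $t\mapsto t^{p}\odot\frac c m$ and $t\mapsto t^{p'}\odot\frac c m$ are genuinely different on $X_m$ — i.e. that the bound $p,q\le c$ is not merely an upper bound but that all exponents in that range are realized by distinct functions. This is exactly the sharpness claim in Proposition~\ref{prop:c}, but there only the endpoints $c-1$ versus $c$ were separated; for full uniqueness I need that $t^{p}\odot\frac c m=\max\{0,pt+\frac c m - p\}$ takes distinct value-vectors on $X_m=\{0,\frac1m,\dots,1\}$ for each $p\in\{1,\dots,c\}$. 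I would check this by noting that $t^p\odot\frac c m>0$ exactly when $t>1-\frac{c}{pm}$, and that the threshold $1-\frac{c}{pm}$ is strictly decreasing in $p$, so the zero set strictly shrinks as $p$ increases through $1,\dots,c$ as long as the thresholds fall in distinct gaps of the grid $X_m$; a short case analysis (or the observation that at $t=\frac{m-1}{m}$ the value $\max\{0,\frac{c-p}{m}\}$ already distinguishes all $p\le c$) closes this. Once that separation is in hand, the rest of the uniqueness proof is bookkeeping.
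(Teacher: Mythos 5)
Your proposal is correct and follows essentially the same route as the paper: the reduction of exponents rests on the identity $x_\ell^{p}\odot\frac cm=x_\ell^{c}\odot\frac cm$ for $p\ge c$ from Proposition~\ref{prop:c}, and uniqueness is proved by the same separating evaluations the paper uses — the base point with $x_j=1$ on $A$ and $x_j=0$ on $B$ to pin down $c$ and the sets $A,B$, and the evaluation $x_\ell=\frac{m-1}{m}$, giving the value $\frac{c-p_\ell}{m}$, to pin down the exponents, which is exactly how the paper disposes of the ``main obstacle'' you single out. One sentence needs repair: the univariate restriction at the maximizing base point is not ``identically $0$ iff $j\notin A$'' (for $j\notin A\cup B$ it is constantly $\frac cm\neq 0$, and for $j\in B$ it is nonzero at $t=0$), and ``which of the two restrictions is non-constant'' cannot separate $A$ from $B$, since varying $x_j$ and varying $\neg(x_j)$ are reparametrizations of the same restriction; the correct test, immediate from the formulas you already wrote, is to compare the endpoint values $\bigl(f|_{x_j=0},f|_{x_j=1}\bigr)$, which equal $\bigl(0,\frac cm\bigr)$, $\bigl(\frac cm,0\bigr)$, or $\bigl(\frac cm,\frac cm\bigr)$ according as $j\in A$, $j\in B$, or neither.
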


\begin{proof}
Given a nonzero function $f$ as in~\eqref{eq:fx}, the existence of subsets $A$, $B$ and exponents yielding equality~\eqref{eq:min} is the content of Proposition~\ref{prop:c}.

We now prove that an expression like~\eqref{eq:min} is unique. Given nonzero $\odot$-$\neg$ functions $f_1, f_2$ written as
\[
f_k=\underset{i\in A_k}{\bigodot} \, x_i^{p_{i,k}}\,  \, \underset{j \in B_k}{\bigodot} \, \neg(x_j)^{q_{j,k}}\,  \, \bigodot \frac{c_k}{m}, \quad k\in\{1,2\},
\]
with $A_k,B_k\subseteq\{1,\dots,n\},  \; A_k\cap B_k=\emptyset$ and $1\leq p_{i,k} \leq c_k$, $1\leq q_{j,k} \leq c_k$ for all $i\in A_k$, $j\in B_k$,
we need to see that the following conditions are equivalent:
\begin{itemize}
    \item[(i)] $f_1(x) = f_2(x)$ for all $x \in X_m^n$,
    \item[(ii)]  $c_1=c_2=c \neq 0$, 
$A_1=A_2$, $B_1=B_2$, $p_{i_1}=p_{i,2}$ for all $i\in A_1$, and $q_{j,1}=q_{j,2}$ for all $j\in B_1$.
\end{itemize}
It is clear that (ii) implies the equality of the functions in (i).

Assume (i) holds with $c_1, c_2 \neq 0$.
If  the four subsets $A_1, A_2, B_1, B_2$ are all empty, then $c_1=c_2$ and (ii) holds.
If there exists $\ell \in A_1 \cap B_2$, consider $x\in X_m^n$ such that $x_i = 1$ for all $i\in A_1$, and $x_j = 0$ for all $j\in B_1$; then $f_1(x)=\frac{c_1}{m}\neq 0$ and $f_2(x)=0$ which is not possible. Therefore,  $A_1\cap B_2=\emptyset$, and in a similar way $A_2\cap B_1=\emptyset$.
 
If there exists $\ell\in A_1\setminus A_2$ consider $x\in X_m^n$ such that $x_i = 1$ for all $i\in A_2$, and $x_j = 0$ for all $j\notin A_2$, then $f_1(x)= 0$ but $f_2(x)=\frac{c_2}{m}\neq 0$ which is not possible. By a similar reasoning, we get that $A_1=A_2$ and $B_1=B_2$. 
Choose $x\in X_m^n$ with  $x_i = 1$ for all $i\in A_1=A_2$ and $x_j = 0$ for all $j \in B_1=B_2$. Then  $f_k(x) = \frac{c_k}{m}$, $k=1,2$, and we deduce from (i) that $c_1=c_2=c \in \{0,\dots,m\}$.
To see that the exponents are equal, if $A_1\neq \emptyset$, consider $\ell\in A_1$ and $x\in X_m^n$ such that $x_i = 1$ for all $i\in A_1$, $i\neq \ell$, and $x_j = 0$ for all $j\in B_1$; then $f_k(x) = x_\ell^{p_{\ell,k}} \odot \frac{c}{m}$, $k=1,2$. 
Setting $x_\ell = \frac {m-1} m$, we have
 \[
    f_k(x) = \left(\frac{m-1}{m}\right)^{p_{\ell,k}}\odot \frac{c}{m}= \max\left\lbrace 0, \frac{c}{m} - p_{\ell,k}\left(1-\frac{m-1}{m}\right)\right\rbrace = \max\left\lbrace 0, \frac{c - p_{\ell,k}}{m} \right\rbrace = \frac{c - p_{\ell,k}}{m},
 \]
since $c \geq p_{\ell,k}$. Then $p_{\ell, 1}=p_{\ell,2}$. In a similar way if $B_1 \neq \emptyset$, necessarily $q_{j,1}=q_{j,2}$ for all $j\in B_1$.
\end{proof}

Before we show that any network function $F:X_m^n\to X_m^n$ can be transformed into a $\odot$-$\neg$ network function  where we can trace the fixed points of $F$, we introduce a complexity measure that will estimate the size of the new $\odot$-$\neg$ network.

\begin{definition}\label{def:mu} 
 Given a function $f:X_m^n \to X_m$ written in terms of $\odot$ and $\neg$, we define by Algorithm~\ref{algo:mu_and_barf} below a complexity measure   called \emph{depth}, denoted by $\mu(f)$. We also define the associated function $\overline{f}$ depending on $\mu(f)$ new variables.
 
 \begin{algorithm}[htb]\label{algo1}
 \caption{How to build $\mu$ and $\overline{f}$}\label{algo:mu_and_barf}
  \flushleft{\textbf{Input:} $f:X_m^n\to X_m$,  written in terms of $\odot$ and $\neg$}.
  \\
  \textbf{Output:} $\mu(f)$ and a network function $(\overline{f},\overline{f}_{u_1},\dots,\overline{f}_{u_{\mu(f)}}):X_m^{n+\mu(f)}\to X_m^{1+\mu(f)}$.
  \begin{algorithmic}
    \State $\mu(f) \gets 0$
    \State $\overline{f}_0 \gets f$
    \State $k \gets 1$
    \While{there is an expression $\neg(h)$, with $h$ a $\odot$-$\neg$ function with two or more factors, in $\overline{f}_{k-1}$}
    \State Add a new variable $u_k$
    \State $\overline{f}_{k} \gets$ the resulting function after replacing $h$ in $\overline{f}_{k-1}$ with the new variable $u_k$ 
    \State $\overline{f}_{u_k} \gets h$
    \State $\mu(f) \gets \mu(f)+1$
    \State $k \gets k+1$
    \EndWhile
    \State $\overline{f} \gets \overline{f}_{k}$
  \end{algorithmic}
\end{algorithm}

Given a network function $F=(f_1,\dots, f_n):X_m^{n}\to X_m^{n}$, apply the algorithm to every $f_i$, and we define $\mu(F)=\sum_{i=1}^{n} \mu(f_i)$. 
This is an upper bound to the total amount of new variables one needs to add in order to write each $F_i$ as a $\odot$-$\neg$ function as in Definition~\ref{def:dotneg}.
\end{definition}

\begin{remark}\label{rem optimized}
 Algorithm~\ref{algo:mu_and_barf} can be optimized, for example, by making a list of substitutions.  A new variable is added only if the new  substitution is not on the list. 
\end{remark}

 \begin{remark}\label{rmk:recursive}
     For $f:X_m^n\to X_m$ and $1\leq k\leq \mu(f)$, note that $\overline{f}_{u_k}$ only depends on previously defined variables: $\overline{f}_{u_1}=\overline{f}_{u_1}(x_1,\dots,x_n)$, and  $\overline{f}_{u_k}=\overline{f}_{u_k}(x_1,\dots, x_n,u_1,\dots,u_{k-1})$, for $2\leq k$.
 \end{remark}

\begin{example}\label{ex:f_mu}
If we go back to Motifs~2,~3 and~4 from \S~\ref{ssec:motifs}, we can transform each function into a $\odot$-$\neg$ function, by reproducing the steps in Algorithm~\ref{algo:mu_and_barf}.

\

\noindent \textbf{\underline{Motif 2:}}

\noindent \begin{tabular}{r@{\hspace{1mm}}c@{\hspace{1mm}}lcc@{\hspace{2mm}}c@{\hspace{2mm}}l}
    $f_1$ & $=$ & $x_1\oplus x_2 \oplus x_3^2$ & & $\overline{f}_1$ & $=$ & $\neg(u_2)$ \\
    & $=$ & $\min\left\{1,x_1+x_2+\max\{0,2x_3-1\}\right\}$ & $\Rightarrow$ & $\overline{f}_{u_1}$ & $=$ &  $x_3\odot x_3$\\
    &$=$ & $\neg\left(\neg(x_1)\odot \neg(x_2)\odot \neg(x_3\odot x_3)\right)$ & & $\overline{f}_{u_2}$ & $=$ & $\neg(x_1)\odot \neg(x_2)\odot \neg(u_1)$
\end{tabular}

\

\noindent \textbf{\underline{Motif 3:}} 

\noindent \begin{tabular}{r@{\hspace{1mm}}c@{\hspace{1mm}}lcc@{\hspace{2mm}}c@{\hspace{2mm}}l}
    $f_1$ & $=$ & $x_2\oplus(x_1\ominus x_3)$ & & $\overline{f}_1$ & $=$ & $\neg(u_2)$ \\
    & $=$ & $\min\left\{1,x_2+\max\{0,x_1-x_3\}\right\}$ & $\Rightarrow$ & $\overline{f}_{u_1}$ & $=$ &  $x_1\odot \neg(x_3)$\\
    &$=$ & $\neg\left(\neg(x_2)\odot \neg(x_1\odot \neg(x_3))\right)$ & & $\overline{f}_{u_2}$ & $=$ & $\neg(x_2)\odot \neg(u_1)$
\end{tabular}

\

\noindent \textbf{\underline{Motif 4:}} 

\noindent \begin{tabular}{r@{\hspace{1mm}}c@{\hspace{1mm}}lcc@{\hspace{2mm}}c@{\hspace{2mm}}l}
    $f_\ell$ & $=$ & $\bigoplus_{i\in A} w_i x_i \ominus \left(\bigoplus_{j\in B} w_j x_j\right)$ & & $\overline{f}_\ell$ & $=$ & $\neg(u_1)\odot \bigodot_{j\in B} \neg(x_j)^{w_j}$ \\
    & $=$ & $\max\left\{0,\min\left\{1,\underset{i\in A}{\sum} w_ix_i \right\}-\min\left\{1,\underset{j\in B}{\sum} w_j x_j \right\}\right\}$ & $\Rightarrow$ & $\overline{f}_{u_1}$ & $=$ &  $\bigodot_{i\in A}  \neg(x_i)^{w_i}$\\
    &$=$ & $\neg\left(\bigodot_{i\in A}  \neg(x_i)^{w_i}\right) \odot \left(\bigodot_{j\in B} \neg(x_j)^{w_j}\right)$ & &  & & 
\end{tabular}

\medskip

In the three cases, the corresponding value of $\mu$ equals $2$ and we can see that this is related to the number of alternations between maximum and minimum occurring in the functions. 
\end{example}

\begin{remark} \label{rem:branches}
Note that by items~\ref{it:several_products} and~\ref{it:several_sums}  in Proposition~\ref{prop:oper}, consecutive  $\oplus$ (resp. $\odot$) operations can be replaced by a single minimum (resp. maximum). Thus, given any function $f:X_m^n \to X_m$, $\mu(f)$ {\em measures} the alternation of $\oplus$ and $\odot$ operations in the expression of $f$.

 We could have written any function in terms of $\oplus, \neg$ and constant functions and proposed a similar algorithm to add a new variable for every negation of a sum. We chose $\odot$-$\neg$ functions instead of $\oplus$-$\neg$ ones to generalize \cite{VC}. 
\end{remark}

Wiring diagrams of $\odot$-$\neg$ networks encode all the information that the network carries.
The fixed points of the dynamical system obtained by iteration of $F$ are 
in bijection with the fixed points of $ \overline{F}$.  Given $F: X_m^{n_1}\to X_m^{n_1}$ and the corresponding $\overline{F}:X_m^{n_2}\to X_m^{n_2}$ constructed as in Definition~\ref{def:mu}, call $x=(x_1,\dots,x_{n_1})$, we define the function $U: X_m^{n_1}\to X_m^{n_2-n_1}$ in a recursive way (see Remark~\ref{rmk:recursive}):
\begin{equation}\label{def:U}
 \begin{cases}
     U_1(x)= \overline{f}_{u_1}(x),\\
     U_k(x)=\overline{f}_{u_k}(x,U_1(x),\dots,U_{k-1}(x)), \text{ for } 2\leq k\leq n_2-n_1.
 \end{cases}
\end{equation}
\

\begin{theorem}\label{thm:DN} 
 Given $F:X_m^{n_1}\to X_m^{n_1}$, the algorithm in Definition~\ref{def:mu} constructs $\overline{F}:X_m^{n_2}\to X_m^{n_2}$ which consists of $\odot$-$\neg$ functions and the function  $U:X_m^{n_1}\to X_m^{n_2-n_1}$ as in~\eqref{def:U}, where $n_2-n_1\leq \mu(F)$.  Moreover, the following diagram commutes:
 \begin{equation}\label{eq:diagram_DN}
\xymatrix{
 X_m^{n_1}\ar@{->}[r]^{(Id, U)}&X_m^{n_2}\ar@{->}[d]^{\overline{F}}\\
 X_m^{n_1}\ar@{<-}[u]^{F}&X_m^{n_2}\ar[l]^{\pi(x,y)=x}\\
}
 \end{equation}
 and the fixed points of $F$ are in bijection with the fixed points of $\overline{F}$ via projection onto the first $n_1$ coordinates.
\end{theorem}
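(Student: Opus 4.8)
The plan is to verify the commutativity of diagram~\eqref{eq:diagram_DN} directly from the recursive construction in Algorithm~\ref{algo:mu_and_barf}, and then deduce the bijection on fixed points as a formal consequence. First I would set $n = n_1$ and let $\mu = \mu(F) \geq n_2 - n_1$; write a point of $X_m^{n_2}$ as $(x, u)$ with $x \in X_m^{n}$ and $u = (u_1, \dots, u_{n_2 - n})$. By Remark~\ref{rmk:recursive}, each $\overline{f}_{u_k}$ depends only on $x$ and $u_1, \dots, u_{k-1}$, so the map $U = (U_1, \dots, U_{n_2 - n})$ of~\eqref{def:U} is well-defined by induction on $k$. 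The key lemma is that for every $x \in X_m^{n}$,
\begin{equation*}
\overline{f}_{u_k}\bigl(x, U_1(x), \dots, U_{k-1}(x)\bigr) = U_k(x) \quad \text{and} \quad \overline{f}_i\bigl(x, U(x)\bigr) = f_i(x)
\end{equation*}
for all relevant $i, k$. The first identity is exactly the definition of $U_k$. The second is the crucial point: it says that substituting $u_j \mapsto U_j(x)$ into $\overline{f}_i$ recovers the original $f_i$. This follows because, at each step of the \textbf{while} loop, the function $\overline{f}_{k}$ is obtained from $\overline{f}_{k-1}$ by replacing a subexpression $h$ (a $\odot$-$\neg$ function in the current variables) by the fresh variable $u_k$, while simultaneously recording $\overline{f}_{u_k} = h$; hence $\overline{f}_{k-1}$ is recovered from $\overline{f}_{k}$ by back-substituting $u_k \mapsto h$. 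Composing these back-substitutions from $k = \mu$ down to $k = 1$, and using that $U_k(x)$ is precisely the value of the recorded expression $h$ evaluated along the previously back-substituted variables, turns $\overline{f}_i(x, U(x))$ into $f_i(x)$. I would make this precise by downward induction on the loop index, with the inductive hypothesis that $\overline{f}_{k}$ evaluated at $(x, U_1(x), \dots, U_k(x), u_{k+1}, \dots)$ agrees with $\overline{f}_{k-1}$ evaluated at $(x, U_1(x), \dots, U_{k-1}(x), u_k, \dots)$ once $u_k$ is set to $U_k(x)$.

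Granting the key lemma, the commutativity of~\eqref{eq:diagram_DN} is immediate: for $x \in X_m^{n}$ we have $\overline{F}(\mathrm{Id}, U)(x) = \overline{F}(x, U(x)) = \bigl(\overline{f}_1(x,U(x)), \dots, \overline{f}_n(x,U(x)), \overline{f}_{u_1}(x,U(x)), \dots\bigr) = \bigl(f_1(x), \dots, f_n(x), U_1(x), \dots\bigr) = (F(x), U(F(x)) \text{'s first pieces})$, and applying $\pi$ (projection onto the first $n$ coordinates) gives $F(x)$. So $\pi \circ \overline{F} \circ (\mathrm{Id}, U) = F$. For the bijection on fixed points, suppose $x \in X_m^{n}$ satisfies $F(x) = x$. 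Then $\overline{F}(x, U(x))$ has first $n$ coordinates $F(x) = x$ and remaining coordinates $\overline{f}_{u_k}(x, U(x)) = U_k(x)$ by the key lemma, so $(x, U(x))$ is a fixed point of $\overline{F}$. Conversely, if $(x, u)$ is a fixed point of $\overline{F}$, then the first $n$ coordinates give $\overline{f}_i(x, u) = x_i$ and the remaining ones give $u_k = \overline{f}_{u_k}(x, u)$; reading the latter equations from $k = 1$ upward and using Remark~\ref{rmk:recursive} forces $u_k = U_k(x)$ by induction on $k$, whence $\overline{f}_i(x, u) = \overline{f}_i(x, U(x)) = f_i(x) = x_i$, so $x$ is a fixed point of $F$. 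Thus $x \mapsto (x, U(x))$ and projection $\pi$ are mutually inverse bijections between the fixed-point sets.

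I expect the main obstacle to be the bookkeeping in the downward induction establishing $\overline{f}_i(x, U(x)) = f_i(x)$: one must be careful that the subexpression $h$ replaced at step $k$ may itself contain earlier-introduced variables $u_1, \dots, u_{k-1}$ (as in Motif~2, where $\overline{f}_{u_2}$ involves $u_1$), so the back-substitution has to be performed in the correct order and the evaluation of the recorded expressions must be threaded through the previously computed values $U_1(x), \dots, U_{k-1}(x)$ — which is exactly why $U$ is defined recursively in~\eqref{def:U}. Once the order of substitution is pinned down to match the order in which variables were introduced, each individual step is a routine rewriting, and the statement $n_2 - n_1 \leq \mu(F)$ is just the observation (already noted after Definition~\ref{def:mu}, cf. Remark~\ref{rem optimized}) that the optimized algorithm may introduce fewer variables than the unoptimized count $\mu(F)$.
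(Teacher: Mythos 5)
Your proposal is correct and follows essentially the same route as the paper: the paper likewise observes that $\pi(\overline{F}(x,U(x)))=F(x)$ follows from the construction, sends a fixed point $x^*$ of $F$ to $(x^*,U(x^*))$, and recovers $u^*=U(x^*)$ from the recursive definition~\eqref{def:U} in the converse direction. You merely spell out the back-substitution induction that the paper declares immediate (and the harmless slip writing the tail of $\overline{F}(x,U(x))$ as pieces of $U(F(x))$ rather than $U(x)$ does not affect the argument, since only the projection onto the first $n_1$ coordinates is used there).
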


Before the proof we present a clarifying example.

\begin{example}\label{ex:TD}
This example is based on Thomas and D'Ari \cite[Chapter~4,\S II]{Thomas_DAri}. Consider three genes $\mathcal X,\mathcal Y, \mathcal Z$ with respective products denoted by $x, y, z$. Gene $\mathcal X$ is expressed constitutively (not regulated), gene $\mathcal Y$ is expressed only in the absence of product $x$, and gene $\mathcal Z$ is expressed provided product $y$ or product $z$ is present.

The Boolean logical relations proposed in \cite{Thomas_DAri} are 
\[
(x,y,z) \mapsto (1, \lnot x, y \lor z),
\]
with two steady states: $(1,0,0)$ and $(1,0,1)$. The authors argue that starting from the initial state $(0,0,0)$ if  gene $ \mathcal X$ is turned on (e.g. immediately after infection by a virus) then gene $\mathcal Y$ will be expressed until product $x$ appears, switching it off. Gene $\mathcal Z$ will be turned on only if $y$ appears before $x$ switches gene $Y$ off. 
Note that the order in which individual variables are updated does not change the steady states of the system.

This model is assumed to have three time delays $t_x, t_y, t_z$. If $t_x<t_y$, then $(0,0,0) \mapsto (1,0,0)$ and the system remains there as in the synchronous Boolean case. 

\smallskip

When $t_x>t_y$ then $(0,0,0)\mapsto (0,1,0)$. If moreover $t_x>t_y+t_z$, then $(0,1,0)\mapsto (1,1,1)\mapsto (1,0,1)$ and the system remains at $(1,0,1)$. We propose instead the following {\em synchronous} modeling with $m=3$.  We assume that $\mathcal X$ switches $\mathcal Y$ off completely only if $x$ is at level $1$ (fully activated). We then consider the following network function $F =(f_x, f_y, f_z)$:
\[
F: X_m^3 \to X_m^3, \quad  F(x,y,z) \, = \, \left(x \oplus \tfrac{1}{3}, \neg(x), y \oplus z\right).
\]
The orbit of $(0,0,0)$ under $F$ equals: $(0,0,0) \mapsto (\tfrac{1}{3},1,0) \mapsto (\tfrac{2}{3}, \tfrac{2}{3}, 1) \mapsto (1, \tfrac{1}{3}, 1) \mapsto (1, 0,1) \mapsto (1,0,1)$. So, $(1,0,1)$ is a fixed point and as $\mathcal X$ is activated in three steps, $\mathcal Z$ can be turned on even if $\mathcal Y$ is eventually turned off.   Note that $(1,0,z_0)$ is a fixed point for any value $z_0$ of $z$.
 
To translate $F$ to an $\odot$-$\neg$ function $\overline{F}$  we need to add two variables $u_1$, $u_2$. Thus,  we have that $\overline{F}= (\overline{f}_x, \overline{f}_y, \overline{f}_z, \overline{f}_{u_1},\overline{f}_{u_2}):X_m^5 \to X_m^5$ is defined by

\begin{center}
   \begin{tabular}{c@{\hspace{2mm}}c@{\hspace{2mm}}l}
    $\overline{f}_x$ & $=$ & $\neg(u_1)$ \\
    $\overline{f}_y$ & $=$ & $\neg(x)$ \\
    $\overline{f}_z$ & $=$ & $\neg(u_2)$ \\
    $\overline{f}_{u_1}$ & $=$ &  $\neg(x)\odot \tfrac{2}{3}$\\
    $\overline{f}_{u_2}$ & $=$ & $\neg(y)\odot \neg(z)$.
  \end{tabular} 
\end{center}

Its fixed points are in bijection with those of $F$ by projecting into the first $3$ coordinates. Indeed, if $(x^*,y^*,z^*,u_1^*,u_2^*)$ is a fixed point of $\overline{F}$ then
\begin{align*}
    x^* &=\neg(u_1^*)=\neg\left(\neg(x^*)\odot \tfrac{2}{3}\right)=x^*\oplus \tfrac{1}{3}= f_x(x^*,y^*,z^*),\\
    y^* &=\neg(x^*)= f_y(x^*,y^*,z^*),\\
    z^* &=\neg(u_2^*)=\neg\left(\neg(y^*)\odot \neg(z^*)\right)=y^*\oplus z^*= f_z(x^*,y^*,z^*),
\end{align*}
and $(x^*,y^*,z^*)$ is a fixed point of $F$. Reciprocally, if $(x^*,y^*,z^*)$ is a fixed point of $F$, define $u_1^*=\neg(x^*)\odot \tfrac{2}{3}$ and $u_2^*=\neg(y^*)\odot \neg(z^*)$ and it is straightforward to check that $(x^*,y^*,z^*,u_1^*,u_2^*)$ is a fixed point of $\overline{F}$.
\end{example}

\begin{proof}[Proof of Theorem~\ref{thm:DN}]
From Algorithm~\ref{algo:mu_and_barf} and Definition \ref{def:mu} 
it is straightforward to check that $n_2-n_1\leq \mu(F)$. It is also immediate from the definition of $\overline{F}$ and $U$ that $\pi(\overline{F}(x,U(x)))=F(x)$ for any $x\in X^{n_1}$ and the diagram in~\eqref{eq:diagram_DN} commutes. 

Consider a fixed point $x^*$ of $F$ and define $u^*=U(x^*)$. Then, $\overline{f}_i(x^*,u^*)=\overline{f}_i(x^*,U(x^*))=f_i(x^*)=x_i^*$ for  $1\leq i\leq n_1$, and $\overline{f}_i(x^*,u^*)=u_i^*$ for  $1\leq i\leq n_2-n_1$ by definition of $u^*$. Thus, $(x^*,u^*)$ is a fixed point of $\overline{F}$.

Conversely, if $(x^*,u^*)$ is a fixed point of $\overline{F}$, by the recursive definition~\eqref{def:U} of $U$ we have that $u^*=U(x^*)$. Then, $F(x^*)=\pi\left(\overline{F}\left(x^*,U(x^*)\right)\right)=\pi\left(\overline{F}(x^*,u^*)\right)=\pi(x^*,u^*)=x^*$. We conclude that $x^*$ is a fixed point of $F$. 
\end{proof}

\section{Simplifying the fixed point equations}\label{sec:reductions}

We propose in this section several reductions that can be applied to a multivalued network $F:X_m^{n}\to X_m^{n}$, and its $\odot$-$\neg$ corresponding network $\overline{F}$, that eventually produce a $\odot$-$\neg$ network $G:X_m^{n'}\to X_m^{n'}$ whose fixed points allow us to reconstruct immediately the fixed points of $F$. Usually $n'\ll n$, see for instance \S~\ref{ssec:mammalian} ($n=19$, $n'=4$) or \S~\ref{sec:bioexample} ($n=15$, $n'=5$ in the worst scenario).

\smallskip

We first simplify the equations with the aid of some reductions that can be easily read from the algebraic expressions. We then transform the reduced network into a $\odot$-$\neg$ network, in which new variables are added to gain simplicity when dealing with minima and maxima imposed by the logical frameworks. We can then apply more network reductions to the $\odot$-$\neg$ network before dealing with the (usually small number of) fixed point equations.

 \subsection{Network reductions}\label{ssec:reductions}

Several network reductions  are proposed in~\cite{Ve2,VCLA} for Boolean networks. In our multivalued context some  of these reductions are still valid. Here we provide a list of simple reductions with the objective of reducing the size of the network by elimination of variables in the process of finding the fixed points.  After finding the fixed points of the smaller network, the fixed points of the original network are reconstructed using the information of the variables that have been eliminated.

 \medskip

 The reductions presented below generalize the ones in \cite{VCLA} for AND-NOT Boolean networks. In what follows, $w, w', w''$ will denote (any/suitable)  multivalued functions.

\begin{enumerate}[label=(\roman*)]
 \item \label{it:xi_negxi} 
 If $f_i=c$ with $c \in X_m$, $f_i=x_k$, or $f_i=\neg(x_k)$ with $i \neq k$, then $x_i$ 
 can be eliminated by replacing $x_i=c$, $x_i=x_k$ or $x_i=\neg(x_k)$, respectively, in every other function.
 \item If  $f_i=f_j$, then we can set $f_i=x_j$ and eliminate $x_i$. 
 \item If $x_i$ does not appear in any $f_j$, then $x_i$ can be eliminated.
 \item As for all $x$ we have the identity $x\odot \neg(x)=0$ by~\eqref{eq:minus}, if
 $f_i=x_j\odot x_k\odot w$, and $f_j=\neg(x_k)\odot w'$, then at a fixed point we have that $f_i=x_k\odot \neg(x_k)\odot w''=0$. In this case, we set $x_i=0$ and eliminate $x_i$.
 \item By Remark~\ref{rem:sm}, we can replace any power $s >m$ by $m$.
 \end{enumerate}

 Recall that there is a direct correspondence between the wiring diagram of a $\odot$-$\neg$ network  and the network functions. We present in the following proposition a new reduction that can be easily detected from the wiring diagram of such a network. 
 
 \begin{proposition}\label{prop:pos_neg_reduction}
  
Let $F:X_m^n \to X_m^n$ be a $\odot$-$\neg$ network. If the associated wiring diagram contains (at least) two directed paths ending at $x_k$ and starting at  the same node $x_j$, such that both paths have only positive arrows  except for only one negative arrow leaving from $x_j$, then $x_k =0$ for any steady state $x$.  
 
 \[
\xymatrix{
&&\dots&\ar@{->}[rd]^q&\\
x_j\ar@{->}[ru]^s\ar@{-|}[rd]^r
&&&& x_k\\
&&\dots&\ar@{->}[ru]^t&
}
\hspace{8mm}
\xymatrix{\\
\Rightarrow
\\}
\hspace{8mm}
\xymatrix{&\\
C\ar[r]^0& x_k
}
\]
  
 \end{proposition}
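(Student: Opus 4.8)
The statement asserts that if in the wiring diagram of a $\odot$-$\neg$ network there are two directed paths from $x_j$ to $x_k$, each consisting entirely of positive arrows except for a single negative arrow, and that negative arrow is the first edge leaving $x_j$ (so one path begins $x_j \xrightarrow{\ \text{-}\!\!\mid\ } \bullet \to \cdots \to x_k$ with all positive arrows after, and the other begins $x_j \to \bullet \to \cdots \to x_k$ with all positive arrows and no negative arrow at all — reading the picture, the top path leaves $x_j$ by a \emph{positive} arrow and the bottom by a \emph{negative} arrow). The key algebraic fact to exploit is Remark~\ref{rem:menor}: for $\odot$-$\neg$ functions, composition along a positive path is \emph{monotone increasing} and moreover $x\odot y \le x$, while a negation reverses order. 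The plan is to track, at a hypothetical steady state $x$, lower bounds coming from one path and upper bounds coming from the other, and show they force $x_k$ between a quantity and its ``complement'', collapsing to $0$.

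\textbf{Key steps.} First I would set up notation: let the top path be $x_j = v_0 \to v_1 \to \cdots \to v_a = x_k$ with all arrows positive, and the bottom path $x_j = w_0 \dashv w_1 \to w_2 \to \cdots \to w_b = x_k$, where only the first arrow $w_0 \dashv w_1$ is negative. Next, the crucial monotonicity lemma: if $g$ is a $\odot$-$\neg$ function of the form $g(\dots,y,\dots) = y^p \odot (\text{rest})$ with the ``rest'' not involving $y$, then $y \mapsto g$ is nondecreasing, and if $g$ depends on $\neg(y)$ it is nonincreasing; composing along a chain of positive arrows therefore yields, for the value at a steady state, a monotone relation. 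Then I would argue that along the top (all-positive) path, $x_k$ is a nondecreasing function of $x_j$ evaluated at the steady state, and using $x\odot y \le x$ iteratively (Remark~\ref{rem:menor}), in fact $x_k \le x_j$ at the steady state — more precisely, $x_k \le (x_j)^{p}$ for the product of exponents, hence $x_k \le x_j$. Along the bottom path the first step produces $w_1 \le \neg(x_j)^{q} \le \neg(x_j)$, and then the all-positive tail gives $x_k \le w_1 \le \neg(x_j)$, again using Remark~\ref{rem:menor} repeatedly. Combining, at any steady state $x_k \le x_j$ and $x_k \le \neg(x_j) = 1 - x_j$; but $x_j$ and $1-x_j$ cannot both be close to $1$ — more carefully, I want the reverse inequalities too.

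The honest subtlety is that $x\odot y \le x$ only gives upper bounds, so I also need lower bounds, which is where the \emph{steady state} hypothesis $x_k = f_k(x)$ enters decisively rather than merely $x_k \le \cdots$. The cleaner route, which I would actually carry out: at a steady state, $x_k = f_k(x)$ and $f_k$ is the $\odot$ of the incoming factors; by associativity (Proposition~\ref{prop:oper}) $x_k \le z^{p} \odot w_1^{q'}$ where $z = v_{a-1}$ and $w_1$ are the penultimate nodes on the two paths, and iterating, $x_k \le x_j^{P} \odot \neg(x_j)^{Q}$ for some positive integers $P,Q$ (the products of the exponents along the two paths, the $\neg$ coming from the single negative arrow on the bottom path). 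Now invoke Proposition~\ref{prop:oper}(ix): $a \odot b = 0$ iff $a + b \le 1$; here I need $x_j^P \odot \neg(x_j)^Q = 0$. Since $x_j^P = \max\{0, Px_j - (P-1)\}$ and $\neg(x_j)^Q = \max\{0, Q(1-x_j)-(Q-1)\}$, their sum is $\le 1$: if both are positive then $x_j > \tfrac{P-1}{P}$ and $x_j < \tfrac{1}{Q}$, impossible for $P,Q \ge 1$ unless one term already vanishes; a short case check (the term is $0$ outside a short interval near $1$ resp.\ near $0$, and those intervals are disjoint in $X_m$) finishes it. Hence $x_k \le 0$, so $x_k = 0$, and in the wiring diagram this is recorded as the edge $C \xrightarrow{0} x_k$.

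\textbf{Main obstacle.} The one genuinely delicate point is justifying the step ``$x_k \le x_j^{P}\odot\neg(x_j)^{Q}$'' — i.e.\ that \emph{both} paths' contributions survive as factors in the single product defining $f_k$, with their exponents multiplying correctly down each path. This needs the recursive structure of how $\odot$-$\neg$ network functions compose (each intermediate variable, at a steady state, equals its own $\odot$-$\neg$ expression, which one substitutes in), together with Remark~\ref{rem:menor}'s $y^k \le y$ to bound each substituted block by a single power of the source variable, and the observation that the two paths share only the endpoints $x_j$ and $x_k$ (so no cancellation or interaction between their exponent contributions beyond lying in the same final product). Once that bookkeeping is in place, the arithmetic closing argument via Proposition~\ref{prop:oper}(ix) is routine.
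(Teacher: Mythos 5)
Your proof is correct and follows essentially the same route as the paper's (which is only a one-line sketch): propagate the steady-state equations down the two paths so that $x_k$ is bounded above by a $\odot$-product containing both a power of $x_j$ and a power of $\neg(x_j)$, which vanishes. The closing case analysis is unnecessary, since Remark~\ref{rem:menor} gives $x_j^{P}+\neg(x_j)^{Q}\le x_j+\neg(x_j)=1$ directly, so item (ix) of Proposition~\ref{prop:oper} applies at once.
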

The proof of Proposition~\ref{prop:pos_neg_reduction} is straightforward  since at any steady state $x$ we can eventually replace  $f_k=x_j\odot \neg(x_j)\odot w=0$.
For example, if $f_4(x)= x_1 \odot x_2 \odot x_4$, $f_3(x)=x_1$, $f_2(x)= \neg(x_1)^2\odot \neg(x_3)$,  and
$f_1(x)= x_4^2$,  then we have two directed paths from $x_1$ to $x_4$ and at a steady state $x^*$ we 
can make the following replacements: $x^*_4 = x^*_1 \odot x^*_2 \odot x^*_4 =  x^*_1 \odot \neg(x_1^*)^2\odot
\neg(x_3^*) \odot x^*_4 = 0$.
Useful examples of the reductions above can be seen in the following sections.

 \begin{remark}\label{rem:odot}
Note that all the proposed reductions 
preserve the property of having a $\odot$-$\neg$ function.
\end{remark}

 \begin{definition}\label{indeg}
We define the indegree $\indeg(F)$ of an $\odot$-$\neg$ network function $F: X_m^n \to X_m$ as follows. $F$ is represented by a digraph with nodes  in the set $\{x_1,\dots,x_{n}, C\}$, and directed edges
of the form
$\xymatrix{C \ar@{->}[r]^{c} & x_j}$, $\xymatrix{x_i \ar@{->}[r]^{p} & x_j}, \xymatrix{x_i \ar@{-|}[r]^{q} & x_j}$, for some $p, q\in \N$. We define $\indeg(f_i)=\#\{\textit{arrows arriving to}\  x_i\}$
and $\indeg(F)=\max\{\indeg (f_i)\}$. 
\end{definition} 
Thus, the indegree measures the maximum number of variables on which any coordinate function
$f_i$ of $F$ depend.  For instance, if $F$ is the function in Example~\ref{ex:w}, we have that $\indeg(f_1)= \indeg(f_3)=2$, $\indeg(f_2)=1$ and $\indeg(F)=2$.

 In the Boolean case, most of the reductions described in \cite{VCLA} do not increase the indegree.
Applying any sequence of the reductions we introduced before in this section, we could reduce the number of variables and the computation of the fixed points of a $\odot$-$\neg$ network
function $F$ to the computation of the fixed points of a reduced $\odot$-$\neg$ function without increasing the indegree:
 
 \begin{proposition}\label{prop:indegree}
The indegree of any network function obtained by any sequence of the described reductions from a $\odot$-$\neg$ network function $F$ has indegree smaller of equal than ${\rm indeg}(F)$.
\end{proposition}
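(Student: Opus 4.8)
The plan is to prove Proposition~\ref{prop:indegree} by induction on the number of reductions applied, so it suffices to check that each \emph{single} reduction step of the list in \S\ref{ssec:reductions} (items \ref{it:xi_negxi}--(v)) together with the reduction of Proposition~\ref{prop:pos_neg_reduction} does not increase the indegree. By Remark~\ref{rem:odot} each step again yields a $\odot$-$\neg$ network, so the indegree is well defined at every stage and the induction is legitimate. Let $F=(f_1,\dots,f_n)$ be a $\odot$-$\neg$ network and let $G=(g_1,\dots,g_{n'})$, $n'\le n$, be the result of one reduction; I must show $\indeg(g_j)\le \indeg(F)$ for every surviving coordinate~$j$, which gives $\indeg(G)\le\indeg(F)$.

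The heart of the argument is the substitution reductions \ref{it:xi_negxi} and (ii), since (iii) only deletes an isolated variable (it cannot create new incoming arrows anywhere, so indegrees can only drop), (iv) and Proposition~\ref{prop:pos_neg_reduction} only set some $f_i\equiv 0$ and delete $x_i$ (again incoming arrows are only removed), and (v) merely lowers an exponent label on an existing arrow, leaving the underlying digraph and hence all indegrees unchanged. For reduction (ii), when $f_i=f_j$ we replace $f_i$ by $x_j$ and eliminate $x_i$: in every other function $f_k$ any occurrence of the variable $x_i$ is replaced by the variable $x_j$, so the set of source-nodes of arrows into $x_k$ either stays the same size or shrinks (if $x_j$ was already an input of $f_k$, so that two arrows collapse to one after we combine exponents using Proposition~\ref{prop:c}/Theorem~\ref{th:unique}); hence $\indeg(g_k)\le\indeg(f_k)$, and the new $g_i=x_j$ has indegree~$1\le\indeg(F)$ (this uses $n\ge 1$). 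For reduction \ref{it:xi_negxi}, say $f_i=\neg(x_k)$ (the cases $f_i=c$ and $f_i=x_k$ are analogous or easier): we substitute $x_i\mapsto\neg(x_k)$ in every $f_\ell$. A $\odot$-$\neg$ monomial factor $x_i^{p}$ or $\neg(x_i)^{q}$ then becomes $\neg(x_k)^{p}$ or $x_k^{q}$, so in $g_\ell$ the input node $x_i$ is replaced by the input node $x_k$; thus the number of incoming arrows at $x_\ell$ does not increase --- and strictly decreases if $x_k$ was already an input of $f_\ell$, in which case the two factors in $x_k$ merge into one (using again the canonical form of Theorem~\ref{th:unique}). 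When $f_i=c$ is constant, substituting $x_i\mapsto c$ turns $x_i^{p}\odot\frac{c_\ell}{m}$ into $c^{p}\odot\frac{c_\ell}{m}$, i.e.\ just changes the constant factor, removing an arrow; so again the indegree does not grow.

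The one subtlety to be careful about --- and the place where a naive count could go wrong --- is the possibility of factors \emph{colliding} after a substitution, e.g.\ $f_\ell$ having both $x_i$ and $x_k$ as inputs when we substitute $x_i\mapsto\neg(x_k)$, producing $x_k^{?}\odot\neg(x_k)^{?}$: by Proposition~\ref{prop:c} this factor is either simplified to a single power of $x_k$, a single power of $\neg(x_k)$, or makes $g_\ell\equiv 0$; in all cases the resulting $g_\ell$ is a genuine $\odot$-$\neg$ function and the node $x_k$ contributes \emph{at most one} incoming arrow, so $\indeg(g_\ell)\le\indeg(f_\ell)$. Thus in every elementary reduction every surviving coordinate function has indegree bounded by $\indeg(F)$, so $\indeg(G)\le\indeg(F)$; composing finitely many such steps and taking the maximum over coordinates yields the claim for an arbitrary sequence of reductions. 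I expect the collision/merging bookkeeping just described to be the only real obstacle; everything else is a direct inspection of each reduction rule.
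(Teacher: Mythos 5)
Your case-by-case verification is correct, and it is precisely the ``straightforward'' argument the paper has in mind: it states Proposition~\ref{prop:indegree} without proof, so your contribution is simply to spell out that each substitution replaces (or deletes) an input node rather than adding one, with collisions handled by the canonical form of Theorem~\ref{th:unique}. No gap; at most you could note explicitly that when a substitution turns the implicit constant $1$ into a nontrivial constant (creating a $C$-arrow), the arrow from the eliminated variable disappears at the same time, so the count still does not increase.
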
 
The proof of Proposition~\ref{prop:indegree} is straightforward.

\subsection{Example from Cell Collective: Mammalian Cell Cycle (1607)}\label{ssec:mammalian}

Here we generalize a  Boolean network described in $\hbox{https://cellcollective.org//\#module/1607:1/mammalian-cell-cycle/1}$, where there are many references to the use of this network.
It models the transmembrane tyrosine kinase ERBB2 interaction network. The overexpression of this protein is an adverse prognostic marker in breast cancer, and occurs in almost 30\% of the patients. 

Using our reductions, we can reduce from 19 original variables to 4 variables and a constant.  Yet, in one of the cases, the computation is too long to be done by hand and we used our implementation~\cite{Aye}, that gives an immediate answer in this case

\smallskip

We call $c=\text{EGF}$, $x_1=\text{ErbB1}$, $x_2=\text{ERa}$, $x_3=\text{CycE1}$, $x_4=\text{CycD1}$, $x_5=\text{CDK4}$, $x_6=\text{ErbB3}$, $x_7=\text{cMYC}$, $x_8=\text{Akt1}$, $x_9=\text{ErbB2}$, $x_{10}=\text{p21}$, 
$x_{11}=\text{ErbB1\_2}$, $x_{12}=\text{ErbB1\_3}$, $x_{13}=\text{IGF1R}$, $x_{14}=\text{p27}$, $x_{15}=\text{pRB}$, $x_{16}=\text{ErbB2\_3}$, $x_{17}=\text{CDK6}$, $x_{18}=\text{CDK2}$, and $x_{19}=\text{MEK1}$. 

\smallskip

For any value of $m$, we translate the Boolean network to our multivalued setting by changing $\vee\rightsquigarrow \oplus$, $\wedge\rightsquigarrow \odot$ and $\lnot\rightsquigarrow \neg$:

\

 {\small
\noindent\begin{tabular}{r@{\hspace{1pt}}c@{\hspace{1pt}}l@{\hspace{2pt}}c@{\hspace{12pt}}r@{\hspace{1pt}}c@{\hspace{1pt}}l}
    $f_1 $& $=$& c &                  &$f_{11}$ &$=$ & $x_1 \odot x_9$ 
    \\
    $f_2$ &$=$ & $x_8   \oplus x_{19}$ &&$f_{12}$ &$=$&  $x_1 \odot x_6$   \\
    $f_3$ &$=$&$x_7       $           &&$f_{13}$ &$=$& $( x_8   \odot \neg( x_{16}  ) )  \oplus (  x_2   \odot \neg( x_{16}  ) ) $ 
    \\
    $f_4$ &$=$& $( x_{2} \odot  x_7  \odot x_{19}  )  \oplus ( x_2 \odot  x_7   \odot  x_8  ) $ &&$f_{14} $&$=$& $ x_2 \odot \neg( x_5 )  \odot \neg( x_7 ) \odot \neg( x_8)  \odot \neg( x_{18}) $
    \\
    $f_5$ &$=$&  $x_4   \odot \neg( x_{10}  )   \odot \neg( x_{14}  ) $  &&$f_{15}$ &$=$& $( x_5 \odot x_{17} \odot x_{18}     )  \oplus ( x_5 \odot x_{17} )$ 
    \\
    $f_6$ &$=$& $ c   $                &&$f_{16}$ &$=$&  $x_6 \odot  x_9$   \\
    $f_7$ &$=$& $ x_2\oplus x_8 \oplus x_{19} $ &&  $ f_{17}$ &$=$& $ x_4$  \\
    $f_8$ &$=$& $x_1  \oplus x_{11}  \oplus x_{12}   \oplus x_{13}  \oplus x_{16}$  &&  $f_{18}$ &$=$&  $x_3   \odot \neg( x_{10}  )  \odot \neg( x_{14}  )  $
    \\
    $f_9$ &$=$& c   &&    $f_{19}$ &$=$& $x_1   \oplus x_{11}  \oplus x_{12}  \oplus x_{13}  \oplus x_{16}.$ 
    \\
    $f_{10}$ &$=$&$ x_2  \odot \neg( x_5  )   \odot \neg( x_7  )  \odot \neg( x_8  )$ &&&& \\
 \end{tabular}
 }
 
 \
 
Following the reductions in \S~\ref{ssec:reductions}, we start by replacing $x_1=x_6=x_9=c$, $x_3=x_7$, $x_{17}=x_4$ and removing $f_1$, $f_3$, $f_6$, $f_9$ and $f_{17}$. As $x_{15}$ is not involved in any equation, we set
$x_{15}=(x_5 \odot x_{17} \odot x_{18})\oplus (x_5 \odot x_{17})$ and remove $f_{15}$.
 We then make further reductions  by replacing $x_{11}=x_{12}=x_{16}=c^2$, $x_{19}=x_8$ and removing $f_{11}$, $f_{12}$, $f_{16}$ and $f_{19}$. Moreover, as $x_{10}=x_2  \odot \neg( x_5  )   \odot \neg( x_7  )  \odot \neg( x_8  )$ we can replace this expression in $f_{14}$. The outcomes of these two series of reductions are shown in the two columns below:

\

 {\small
\noindent\begin{tabular}{r@{\hspace{1pt}}c@{\hspace{1pt}}l@{\hspace{2pt}}c@{\hspace{2pt}}r@{\hspace{1pt}}c@{\hspace{1pt}}l}
   $f_2$ & $=$ & $x_8   \oplus x_{19}$ & & $f_2$ & $=$ & $x_8   \oplus x_8$\\
   $f_4$ & $=$ & $( x_2 \odot  x_7  \odot x_{19}  )  \oplus ( x_2 \odot  x_7   \odot  x_8  )$ & & $f_4$ & $=$ & $( x_2 \odot  x_7  \odot x_8  )  \oplus ( x_2 \odot  x_7   \odot  x_8  )$
   \\
   $f_5$ & $=$ & $x_4   \odot \neg( x_{10}  )   \odot \neg( x_{14}  )$ & & $f_5$ & $=$ & $x_4   \odot \neg( x_{10}  )   \odot \neg( x_{14}  )$
   \\
   $f_7$ & $=$ & $x_2   \oplus x_8   \oplus  x_{19}$ & & $f_7$ & $=$ & $x_2   \oplus x_8   \oplus  x_8$
   \\
   $f_8$ & $=$ & $x_{11}  \oplus x_{12}    \oplus x_{13}  \oplus x_{16}\oplus c $ & $\rightsquigarrow$ & $f_8$ & $=$ & $x_{13}  \oplus c   \oplus c^2  \oplus c^2  \oplus c^2$
   \\
   $f_{10}$ & $=$ & $x_2  \odot \neg( x_5  )   \odot \neg( x_7  )  \odot \neg( x_8  )$ & & $f_{10}$ & $=$ & $x_2  \odot \neg( x_5  )   \odot \neg( x_7  )  \odot \neg( x_8  )$
   \\
   $f_{11}$ & $=$ & $c^2$ & & $f_{13}$ & $=$ & $( x_8   \odot \neg( c^2  ) )  \oplus (  x_2   \odot \neg( c^2  ) )$
   \\
   $f_{12}$ & $=$ & $c^2$ & & $f_{14}$ & $=$ & ${x_{10}} \odot \neg( x_{18} ) $
   \\
   $f_{13}$ & $=$ & $( x_8   \odot \neg( x_{16}  ) )  \oplus (  x_2   \odot \neg( x_{16}  ) )$ & & $f_{18}$ & $=$ & $x_7   \odot \neg( x_{10}  )  \odot \neg( x_{14}  ).$
   \\
   $f_{14}$ & $=$ & ${x_{10}} \odot \neg( x_{18} ) $ & & &  & 
   \\
   $f_{16}$ & $=$ & $c^2$ & &  & & 
   \\
   $f_{18}$ & $=$ & $x_7   \odot \neg( x_{10}  )  \odot \neg( x_{14}  )$ & &  &  &
   \\
   $f_{19}$ & $=$ & $ x_{11}  \oplus x_{12}  \oplus x_{13}  \oplus x_{16}   \oplus c$ & & &  & \\
 \end{tabular}
 }

\
 
 We  get the following $\odot$-$\neg$ network function, where we call $c'=c   \oplus c^2  \oplus c^2  \oplus c^2$ and $c''=\neg(c^2)$:
 
\
 
 {\small
\noindent\begin{tabular}{r@{\hspace{1pt}}c@{\hspace{1pt}}l@{\hspace{2pt}}c@{\hspace{2pt}}r@{\hspace{1pt}}c@{\hspace{1pt}}l}
    $\overline{f}_2$ &$=$ & $\neg(u_1)$  & &    $\overline{f}_{u_1}$ &$=$& $\neg(x_8)^2$\\
    $\overline{f}_4$ &$=$ & $\neg(u_3) $&&    $\overline{f}_{u_2}$ &$= $&$x_2 \odot   x_7    \odot  x_8$\\
    $\overline{f}_5$ &$=$ & $ x_4   \odot \neg( x_{10}  )   \odot \neg( x_{14}  ) $  && $\overline{f}_{u_3}$ &$=$ &$\neg(u_2)^2$\\
    $\overline{f}_7$ &$=$ & $ \neg(u_4)$   &&  $\overline{f}_{u_4}$ &$=$& $u_1 \odot \neg(x_2)$\\
    $\overline{f}_8$ &$=$ & $\neg(u_5)$&&    $ \overline{f}_{u_5}$ &$=$ &$\neg(x_{13})\odot \neg(c')$
    \\ 
    $\overline{f}_{10}$ &$=$ & $x_2  \odot \neg( x_5  )   \odot \neg( x_7  )  \odot \neg( x_8  ) $ &&    $\overline{f}_{u_6}$ &$=$&$ x_8   \odot ~c''$\\
    $\overline{f}_{13}$ &$=$ & $\neg(u_8) $ &&    $\overline{f}_{u_7}$ &$=$ &$x_2  \odot ~c''$\\
    $\overline{f}_{14}$ &$=$ & ${x_{10}} \odot \neg( x_{18} ) $  &&    $\overline{f}_{u_8}$ &$=$ &$\neg(u_6)\odot \neg(u_7)$.\\
    $\overline{f}_{18}$ &$=$ & $ x_7   \odot \neg( x_{10}  )  \odot \neg( x_{14}  )$  &&&&
 \end{tabular}
 }

  \

    Again following the reductions in \S~\ref{ssec:reductions}, we replace $x_2=\neg(u_1)$, $x_4=\neg(u_3)$, $x_7=\neg(u_4)$, $x_8=\neg(u_5)$, $x_{13}=\neg(u_8)$, and remove $\overline{f_2}$, $\overline{f_4}$, $\overline{f_7}$, $\overline{f_8}$, $\overline{f_{13}}$. We call $g_i$ the functions obtained after applying the reductions to $\overline{f_i}$.

\noindent \begin{minipage}{0.45\textwidth}
 {\small
 \begin{align*}
g_5 &=    \neg ( x_{10})   \odot \neg ( x_{14}  )   \odot \neg(u_3)  \\
g_{10} &=  \neg ( x_5  )  \odot  \neg(u_1)  \odot u_4   \odot u_5   \\
g_{14} &=  x_{10} \odot \neg( x_{18} )   \\
g_{18} &=  \neg ( x_{10}  )   \odot \neg ( x_{14}  )  \odot   \neg(u_4)  \\
g_{u_1} &= u_5^2\\
g_{u_2} &= \neg(u_1) \odot   \neg(u_4)    \odot  \neg(u_5)\\
g_{u_3} &= \neg(u_2)^2\\
g_{u_4} &= u_1^2\\
g_{u_5} &= u_8\odot \neg(c')\\
g_{u_6} &= \neg(u_5)   \odot ~c''\\
g_{u_7} &= \neg(u_1)  \odot ~c''\\
g_{u_8} &= \neg(u_6)\odot \neg(u_7).
\end{align*}
}
\end{minipage}
\begin{minipage}{0.45\textwidth}
\scalebox{0.8}
\small{
\[    \xymatrix{
    & & u_7\ar@{-|}[dr] & & u_6\ar@{-|}[dl] & C\ar@{->}@/_0.5pc/[l]_</0.4cm/{c''}\ar@{->}@/_2pc/[lll]_</1cm/{c''}\ar@{->}[ddl]^</1.3cm/{\neg(c')}\\
    & & & u_8\ar@{->}[dr] &  & \\
    x_{10}\ar@{-|}@/_1.5pc/[dddrr]\ar@{-|}[ddr]\ar@{->}[ddd] & & u_1\ar@{-|}@[red][ll]\ar@{-|}[uu]\ar@{->}@[blue][dd]_</1cm/{2}\ar@{-|}[ddrr] & & u_5\ar@{->}[ll]_</1cm/{2}\ar@{-|}[uu]\ar@{-|}[dd]  & \\
     & &  & &  & \\
    & x_{18}\ar@{-|}[dl] & u_4\ar@{-|}[l]\ar@{->}@/_0.7pc/@[blue][uull]\ar@{-|}[rr] & & u_2\ar@{-|}[d]^</0.4cm/{2} & \\
    x_{14}\ar@{-|}[ur]\ar@{-|}[rr] & & x_5\ar@{-|}@/_0.8pc/[uuull]  & & u_3\ar@{-|}[ll] & \\
   }
\]
 }
\end{minipage}

\
In order to further reduce the fixed point equations, we now apply the reduction in Proposition~\ref{prop:pos_neg_reduction} to node 10 with the aid of the arrows depicted with red and blue in the network digraph on the right. We then get $x_{10}=0$ and by replacing this information in $g_{14}$ we also get $x_{14}=0$. We make the corresponding replacements and then remove $g_{10}$ and $g_{14}$. We further replace $x_5=\neg(u_3)$ and $x_{18}=\neg(u_4)$, and remove $g_5$ and $g_{18}$. As $u_3$ is not involved in any of the remaining equations, we set $u_3=\neg(u_2)^2$ and remove $g_{u_3}$. With the same reasoning, we replace $u_2=\neg(u_1) \odot   \neg(u_4)    \odot  \neg(u_5)$ and remove $g_{u_2}$, and $u_4=u_1^2$ and remove $g_{u_4}$. We can moreover replace $u_8=\neg(u_6)\odot \neg(u_7)$ and remove $g_{u_8}$. Call $h_i$ the functions obtained after applying the proposed reductions:

\noindent \begin{minipage}{0.47\textwidth}
\begin{align} \label{eq:example}
\nonumber h_{u_1} &= u_5^2\\
h_{u_5} &= \neg(u_6)\odot \neg(u_7)\odot \neg(c')\\
\nonumber h_{u_6} &= \neg(u_5)   \odot ~c''\\
\nonumber h_{u_7} &= \neg(u_1)  \odot ~c''
\end{align}
\end{minipage}
\begin{minipage}{0.47\textwidth}
{\small
\[
    \xymatrix{
     u_7\ar@{-|}[dr] & u_6\ar@{-|}@<-.4ex>[d] & C\ar@{->}@/_0.5pc/[l]_</0.4cm/{c''}\ar@{->}@/_2pc/[ll]_</1cm/{c''}\ar@{->}[dl]^</0.7cm/{\neg(c')}\\
    u_1\ar@{-|}[u] & u_5\ar@{->}[l]_</0.5cm/{2}\ar@{-|}@<-.4ex>[u] & \\
    }
\]
}
\end{minipage}
\vskip5mm

We will refer to system~\eqref{eq:example} as the {\em reduced system}.
\bigskip

So far we have $x_1=x_6=x_9=c$, $x_2=\neg(u_1)$, $x_4=x_5=x_{17}=\neg(u_3)$, $x_3=x_7=x_{18}=\neg(u_4)$, $x_8=x_{19}=\neg(u_5)$, $x_{10}=x_{14}=0$, $x_{11}=x_{12}=x_{16}=c^2$, $x_{13}=\neg(u_8)$, $x_{15}=(x_{18} \odot x_5 \odot x_{17})\oplus (x_5 \odot x_{17})$, and $u_2=\neg(u_1) \odot   \neg(u_4)    \odot  \neg(u_5)$, $u_3=\neg(u_2)^2$, $u_4=u_1^2$, and $u_8=\neg(u_6)\odot \neg(u_7)$. 

\bigskip

Note that $c^2=\max\{0,2c-1\}$. Then,

\medskip

\begin{itemize}\itemsep=12pt
 \item If $c\leq\frac{1}{2}$, then $c^2=0$ and $c'=c$, $c''=1$. We obtain then $u_6=\neg(u_5)$, $u_7=\neg(u_1)$ and we can remove $h_{u_6}$ and $h_{u_7}$ and get
    $h_{u_1} = u_5^2$, 
$h_{u_5} = u_5\odot u_1\odot \neg(c)$.
  
By going one step further and replacing $u_1=u_5^2$ we are left with only one fixed point equation:
\[
 u_5=u_5^3\odot \neg(c).
\]
\begin{itemize}
 \item If $u_5\leq \frac{2}{3}$, then $u_5^3=0$ an then necessarily $u_5=0$. This gives the following fixed point:
\[
      x_1=x_6=x_9=c, \; x_{10}=x_{11}=x_{12}=x_{14}=x_{16}=0, 
    \]
    \[
      x_2=x_3=x_4=x_5=x_7=x_8=x_{13}=x_{15}=x_{17}=x_{18}=x_{19}=1.
    \]
 \item If $u_5>\frac{2}{3}$ then
 $ u_5=\max\{0,3u_5+(1-c)-3\}=3u_5+(1-c)-3 $\newline
$ \Leftrightarrow \; 2u_5=2+c  \; \Leftrightarrow \; u_5=1+\frac{c}{2}.$
This can only happen if $u_5=1$ and $c=0$, and then, $c'=0$, $c''=1$, and by substitution we obtain
     $x_i=0$ for $i\in \{1,\dots, 19\}$.
 
\end{itemize}

 \item If $c> \frac{1}{2}$, then $c^2= 2c-1$ and $c'=\min\{1,7c-3\}$. 
  \begin{itemize}\itemsep=12pt
   \item If moreover $c\geq\frac{4}{7}$, then $c'=1$, $\neg(c')=0$ and this gives $u_5=0$.  We deduce then $u_1=0$, $u_6=u_7=c''=\neg(c^2)=2-2c$ and  $u_8=c^4=\max\{0,4c-3\}$. By replacing these values we obtain the unique fixed point:
    \[
      x_1=x_6=x_9=c, \; x_{11}=x_{12}=x_{16}=2c-1, \; x_{13}=\neg(c^4), \; x_{10}=x_{14}=0,
    \]
    \[
      x_2=x_3=x_4=x_5=x_7=x_8=x_{15}=x_{17}=x_{18}=x_{19}=1.
    \]
   \item If $c\in (\tfrac{1}{2},\tfrac{4}{7})$, then $0<c^2=2c-1<\frac{1}{7}$ and  $c''=\neg(c^2)=2(1-c)$. It is easy to check that $u_1=u_5=0$, $u_6=u_7=2(1-c)$ is a fixed point of the reduced system and, by replacing, we obtain:
    \[
      x_1=x_6=x_9=c, \; x_{11}=x_{12}=x_{16}=2c-1, \; x_{10}=x_{14}=0,
    \]
    \[
      x_2=x_3=x_4=x_5=x_7=x_8=x_{13}=x_{15}=x_{17}=x_{18}=x_{19}=1.
    \]
  \end{itemize}
  Indeed, we checked that is the unique fixed point in two cases. With $m=9$ and $c=5/9$ (and so $c'=c''=8/9$), and with $m=13$ and $c=7/13$ (and $c'=10/13, c''=12/13$), using the implementation by A. Galarza Rial~ \cite{Aye}. In this not so big example, these computations are heavy to be made by hand.
\end{itemize}

We will explain in the next section how to systematically compute the fixed points.

\section{Computing fixed points}\label{sec:sstates}

Given $X_m=\left\{0,\frac{1}{m},\dots,\frac{m-1}{m},1\right\}$ with $m\geq 1$ arbitrary, and a multivalued network $F$ over $X_m$, 
we will now focus on effectively finding the fixed points of the system. Our first step is presented in Theorem~\ref{thm:ell} that gives a unique representation of a $\odot$-$\neg$ function in terms of linear forms. Algorithm~\ref{algo2} is the basis for our implementation. We show in Lemma~\ref{lem:algoBool} how it is simplified in the Boolean case. Theorem~\ref{th:regions} gives conditions for the fixed points to be easily computable.

 \begin{theorem}\label{thm:ell}
Given a $\odot$-$\neg$ network function $F=(f_1,\dots,f_n): X_m^n \to X_m^n$,
any nonzero coordinate function $f_i$
   can be written in a unique way as follows:
 \begin{equation} \label{eq:fimax}f_i= \max\{0,\ell_i(x)\},
 \end{equation}
 where $\ell_i$ are linear functions of the form:
 \begin{align}
 \label{eq:ell}  \ell_i(x) &= \ell_i'(x)+\frac{c_i^*}m, \quad \frac{c_i^*}m\in \mathbb Q_{\leq 1},\\
 \nonumber \ell_i'(x) &=\sum_{j\in A_i} p_{ij} x_j - \sum_{j\in B_i} q_{ij} x_j, \quad \text{and} \quad \frac{c_i^*}m = \frac{c_i}m -\sum_{j\in A_i} p_{ij},
 \end{align}
with $\frac {c_i} m \in X_m$, $p_{ij}, q_{ij}$ integers
in $\{1,\dots,c_i\}$ and $A_i \cap B_i = \emptyset$.

\end{theorem}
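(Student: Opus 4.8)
The plan is to reduce the statement to the uniqueness of the canonical $\odot$-$\neg$ form established in Theorem~\ref{th:unique}, by passing back and forth between a $\odot$-$\neg$ expression of $f_i$ and a single expression of the shape $\max\{0,\ell\}$ with $\ell$ a linear form. For existence, I would start from the canonical form provided by Theorem~\ref{th:unique} and Proposition~\ref{prop:c},
\[
f_i(x)=\bigodot_{j\in A_i}x_j^{p_{ij}}\odot\bigodot_{j\in B_i}\neg(x_j)^{q_{ij}}\odot\frac{c_i}{m},
\]
with $A_i\cap B_i=\emptyset$, $1\le p_{ij},q_{ij}\le c_i$ and $1\le c_i\le m$. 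Every factor here --- the $x_j$, the $\neg(x_j)=1-x_j$, and the constant $\frac{c_i}{m}$ --- lies in $[0,1]$, and $\odot$ maps $[0,1]^2$ into $[0,1]$; hence the associativity of $\odot$ together with item~\ref{it:several_products} of Proposition~\ref{prop:oper} applies verbatim, treating the constant as one more factor in the product. Writing $N=\sum_{j\in A_i}p_{ij}+\sum_{j\in B_i}q_{ij}+1$ for the total number of factors and substituting $\neg(x_j)=1-x_j$, the terms $\sum_{j\in B_i}q_{ij}$ coming from the negations cancel against part of $N-1$, and one is left with
\[
f_i=\max\Bigl\{0,\ \sum_{j\in A_i}p_{ij}x_j-\sum_{j\in B_i}q_{ij}x_j+\frac{c_i}{m}-\sum_{j\in A_i}p_{ij}\Bigr\},
\]
which is precisely~\eqref{eq:fimax}--\eqref{eq:ell} with $c_i^*=c_i-m\sum_{j\in A_i}p_{ij}$. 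Since $c_i\le m$ and $\sum_{j\in A_i}p_{ij}\ge 0$, we get $c_i^*\le m$, i.e.\ $\frac{c_i^*}{m}\le 1$, and the bounds on $p_{ij},q_{ij}$ and $c_i$ are inherited from the canonical form.

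For uniqueness, the key point is that this construction is reversible. From any linear form $\ell_i=\ell_i'+\frac{c_i^*}{m}$ of the shape in the statement, let $A_i$ (resp.\ $B_i$) be the set of indices $j$ for which the coefficient of $x_j$ in $\ell_i'$ is positive (resp.\ negative), so that $A_i\cap B_i=\emptyset$ automatically, let $p_{ij},q_{ij}$ be the corresponding absolute values, and put $c_i=c_i^*+m\sum_{j\in A_i}p_{ij}$. The hypotheses ``$\frac{c_i}{m}\in X_m$'' and ``$p_{ij},q_{ij}$ integers in $\{1,\dots,c_i\}$'' say exactly that $1\le c_i\le m$ and $1\le p_{ij},q_{ij}\le c_i$, so this data defines an admissible canonical $\odot$-$\neg$ function, and by the existence computation above that function equals $\max\{0,\ell_i(x)\}$. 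Hence, if $\ell_i$ and $\widetilde{\ell_i}$ are two linear forms of the required shape with $\max\{0,\ell_i(x)\}=\max\{0,\widetilde{\ell_i}(x)\}=f_i(x)$ for every $x\in X_m^n$, the two reconstructed canonical $\odot$-$\neg$ expressions both represent the same nonzero function $f_i$, hence coincide by Theorem~\ref{th:unique}; reading off the exponents and the constant then gives $\ell_i=\widetilde{\ell_i}$.

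I expect the only delicate points to be bookkeeping ones: checking that the ``product as a single maximum'' identity of Proposition~\ref{prop:oper} may legitimately be applied once the constant factor $\frac{c_i}{m}$ is absorbed into the $\odot$-product (it can, since all factors remain in $[0,1]$), and, in the uniqueness half, checking that the reconstruction always produces admissible canonical data. This last step is exactly where the size constraints $p_{ij},q_{ij}\le c_i$ in the statement are indispensable: without them two genuinely different linear forms can represent the same function on the finite set $X_m^n$ --- compare the phenomenon in Remark~\ref{rem:sm}, where $x^s\odot\frac{c}{m}$ and $x^c\odot\frac{c}{m}$ agree on $X_m$ for $s\ge c$ --- and the assignment $\ell_i\mapsto f_i$ would no longer be injective on linear forms of the prescribed shape.
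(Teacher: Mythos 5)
Your proposal is correct and follows essentially the same route as the paper: existence is obtained by starting from the canonical form of Proposition~\ref{prop:c}/Theorem~\ref{th:unique} and applying the $n$-fold product identity of Proposition~\ref{prop:oper} with the constant $\frac{c_i}{m}$ absorbed as an extra factor, and uniqueness is reduced to Theorem~\ref{th:unique}. If anything, you make the uniqueness half more explicit than the paper does, by verifying that the passage from a linear form of the prescribed shape back to admissible canonical $\odot$-$\neg$ data is well defined and injective; the paper leaves this reconstruction implicit.
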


\begin{proof}
We know from Proposition~\ref{prop:c} and Theorem~\ref{th:unique} that any nonzero $\odot$-$\neg$ function $f_i$ can be uniquely written as
\begin{equation} \label{eq:AiBi}
      f_i=\underset{j\in A_i}{\bigodot}x_j^{p_{ij}}\odot \underset{j\in B_i}{\bigodot} (\neg(x_j))^{q_{ij}}\odot \frac{c_i}m, \; A_i,B_i\subseteq\{1,\dots,n\},  \; A_i\cap B_i=\emptyset,
 \end{equation}
with $\frac {c_i} m \in X_m$, $p_{ij}, q_{ij}$ integers
in $\{1,\dots,c_i\}$ and $A_i \cap B_i = \emptyset$.

Now, from item~\ref{it:several_products} in Proposition~\ref{prop:oper} we deduce Equation~\eqref{eq:fimax} with 
\begin{align*}
\ell_i(x)&=\sum_{j\in A_i} p_{ij}x_j+\sum_{j\in B_i}q_{ij}(1-x_j)+\frac{c_i}m-\left(\sum_{j\in A_i} p_{ij}+\sum_{j\in B_i}q_{ij}+1-1\right)\\
&=\sum_{j\in A_i} p_{ij}x_j-\sum_{j\in B_i}q_{ij}x_j+ \frac{c_i}m -\sum_{j\in A_i} p_{ij}.
\end{align*}

As $\frac{c_i}m\in X_m$ and $\sum_{j\in A_i} p_{ij}\in\Z_{\geq 0}$ we deduce that $\frac{c_i^*}m\in \mathbb Q_{\leq 1}$, which is what we wanted to prove.
\end{proof}

Reciprocally:

\begin{lemma}
    Any linear function of the form $\ell=\sum_{j\in A} p_{j} x_j - \sum_{j\in B} q_{j} x_j+(\frac c m-\sum_{j\in A} p_{j})$, with $p_j, q_j \in \Z_{\ge 0}$ and $\frac c m \in X_m$,
         defines a  $\odot$-$\neg$ function $f$.
\end{lemma}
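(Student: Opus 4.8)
The plan is to show that any linear function $\ell$ of the stated form arises from a $\odot$-$\neg$ function $f$ by essentially reversing the computation carried out in the proof of Theorem~\ref{thm:ell}. First I would dispose of the degenerate cases. If $\frac{c}{m} = 0$, then $f \equiv 0$ is a $\odot$-$\neg$ function (in the form $\frac{0}{m}$, as allowed in Theorem~\ref{th:unique}), and there is nothing to prove; likewise if some $p_j$ or $q_j$ is zero we may simply drop that index from $A$ or $B$ without changing $\ell$. If $A \cap B \neq \emptyset$, pick $\ell \in A \cap B$; since $x_\ell^{p_\ell} \odot \neg(x_\ell)^{q_\ell} = 0$ by~\eqref{eq:minus} and Remark~\ref{rem:menor}, the candidate $\odot$-$\neg$ function below would be identically zero, so $\max\{0,\ell(x)\}$ must also be zero on $X_m$ in that case; one checks this directly (the coefficient bookkeeping forces $\ell(x)\le 0$), so again $f\equiv 0$ works. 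Thus we may assume $A \cap B = \emptyset$, $\frac{c}{m}\in X_m\setminus\{0\}$, and $p_j,q_j\ge 1$.

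Next I would define the candidate function explicitly as
\[
f(x_1,\dots,x_n) \, = \, \bigodot_{j\in A} x_j^{p_j} \,\odot\, \bigodot_{j\in B} \neg(x_j)^{q_j} \,\odot\, \frac{c}{m},
\]
which is a $\odot$-$\neg$ function by Definition~\ref{def:dotneg}. The main step is then to verify $f(x) = \max\{0,\ell(x)\}$ for all $x \in X_m^n$. Using Proposition~\ref{prop:oper}\ref{it:several_products} (the multi-variable product as a truncated sum), together with $\neg(x_j) = 1 - x_j$ and $\frac{c}{m}$ viewed as the constant $\frac{c}{m}$, we get
\[
f(x) = \max\Big\{0,\ \sum_{j\in A} p_j x_j + \sum_{j\in B} q_j(1-x_j) + \tfrac{c}{m} - \big(\textstyle\sum_{j\in A}p_j + \sum_{j\in B}q_j\big)\Big\},
\]
and expanding $\sum_{j\in B} q_j(1-x_j) = \sum_{j\in B}q_j - \sum_{j\in B}q_j x_j$ the $\sum_{j\in B}q_j$ terms cancel, leaving exactly $\max\{0, \sum_{j\in A} p_j x_j - \sum_{j\in B} q_j x_j + (\frac{c}{m} - \sum_{j\in A}p_j)\} = \max\{0,\ell(x)\}$. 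This is the same identity appearing in the proof of Theorem~\ref{thm:ell}, read in the opposite direction.

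The only genuine subtlety — and the place I would be most careful — is that the definition of a $\odot$-$\neg$ function in Definition~\ref{def:dotneg} nominally requires $c \in X_m$ (equivalently $c \in \{0,\dots,m\}$), whereas the lemma's hypothesis only asserts $\frac{c}{m}\in X_m$, which is the same thing, so there is no real issue; however, the bounds $1 \le p_j, q_j \le c$ from Proposition~\ref{prop:c} are \emph{not} assumed here. This does not matter for the conclusion: Proposition~\ref{prop:c} and Theorem~\ref{th:unique} tell us that \emph{any} $\odot$-$\neg$ function, regardless of the size of its exponents, can be rewritten with exponents capped at $c$ (replacing $p_j$ by $\min\{p_j,c\}$, etc.) without changing its values on $X_m$ — so the $f$ we wrote down is a bona fide $\odot$-$\neg$ function even if some $p_j > c$. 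Hence I would simply remark that the exponents may be unbounded in the input but the resulting function still lies in the class, invoking Theorem~\ref{th:unique} for the canonical reduction. With these remarks in place the lemma follows immediately from the displayed computation, so the proof is short; the care is entirely in matching the hypotheses to the definitions rather than in any calculation.
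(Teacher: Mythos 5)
Your proposal is correct and follows essentially the same route as the paper, whose entire proof is the one line ``it is enough to consider $f=\bigodot_{j\in A}x_j^{p_j}\odot\bigodot_{j\in B}\neg(x_j)^{q_j}\odot\frac{c}{m}$''; you simply make explicit the verification via Proposition~\ref{prop:oper}\ref{it:several_products} that this product equals $\max\{0,\ell(x)\}$. Your extra case analysis (zero exponents, $A\cap B\neq\emptyset$, exponents exceeding $c$) is harmless but unnecessary, since Definition~\ref{def:dotneg} imposes neither disjointness nor the bound $p_j,q_j\le c$, so the displayed product is a $\odot$-$\neg$ function as written.
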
  

\begin{proof}
   It is enough to consider the function  $f=\underset{j\in A}{\bigodot}x_j^{p_{j}}\odot \underset{j\in B}{\bigodot} (\neg(x_j))^{q_{j}}\odot \frac c m$.
\end{proof}

We deduce from Theorem~\ref{thm:ell} the validity of Algorithm~\ref{algo2} below to compute the fixed points of any ($\odot$-$\neg$)-network function $F:X_m^n \to X_m^n$.

First, let $J$ be the set of  indices $i$ for which $f_i= \ell_i$ consists of a constant, a single variable or its negation. Of course for each of these linear functions we can write them as $\max\{0,\ell_i(x)\}$, but this would introduce unnecessary  branchings in the computations. 
When $J$ is non-empty, we can apply reduction~\ref{it:xi_negxi} in \S~\ref{ssec:reductions} and eliminate 
the variables with indices in $J$.
Note that if we start with a $\odot$-$\neg$ function, we obtain a reduced function $f_{red} :X_m^{n-|J|} \to X^{n-|J|}$ that is also of this form. Then, the number of regions to consider in Step~1 of the following Algorithm~\ref{algo2} is $2^{n-|J|}$.
The linear systems to be solved in Step~2 of the Algorithm have also at most $n-|J|$ variables. In Step~3, we also add the values of the variables with indices in $J$ to output the fixed points.
To alleviate the notation, we assume that $J = \emptyset$ but the input of Algorithm~\ref{algo2} should be the reduced function $f_{red}$ and not $f$.

\begin{algorithm}
\caption{We compute the fixed points of a $\odot$-$\neg$ network with at least one $\odot$ operation in each $f_i$. }\label{algo2}
  \flushleft{\textbf{Input:} A $\odot$-$\neg$ network $F = (f_1, \dots, f_n): X_m^n \to X_m^n$, with $f_i = \max\{0,\ell_i(x)\}$ as in~\eqref{eq:fimax}.}\\
  \textbf{Output:} The fixed points of $F$.
  \begin{algorithmic}
    \State \textbf{Step 1:} For each $I\subseteq \{1,\dots,n\}$ consider the  region $\mathcal{R}_I=\left\lbrace \begin{array}{c} \ell_i(x)\geq 0 \; i\in I\\ \ell_j(x)\leq 0 \; j\notin I\end{array}\right\rbrace$. 
    \State \textbf{Step 2:} Solve the linear system $\left\lbrace \begin{array}{rl} \ell_i(x)=x_i & i\in I \\ 0=x_j & j\notin I\end{array}\right.$. In case there is a single (necessarily rational) solution, check if $x \in X_m^n$ and then check if  $\ell_j(x)\leq 0$ for all $j\notin I$. If this is satisfied, then $x$ is a fixed point of $F$ in $\mathcal{R}_I$.  
      \State \textbf{Step 3:} 
    In case the linear system has infinitely many rational solutions, change the variables to $y_i=m x_i$: multiply by $m$ the constants $\frac{c_i}m$ to translate the linear forms $\ell_i-x_i, \ell_j$  to integer linear forms $\bar{\ell}_i(y)-y_i, \bar{\ell}_j$ and find the lattice points in the rational polytope $$P_I= \{ \bar{\ell}_i(y)-y_i=0, i \in I, y_j=0, j \notin I, 0\le y_i \le m, i \in I, \bar{\ell}_j(y)\leq 0, j\notin I \}.$$ 
        Translate back the solutions to solutions $x \in X_m^n$.
    \end{algorithmic}
\end{algorithm}

Algorithm~\ref{algo2} is greatly simplified in the Boolean case  $m=1$. Computing the fixed points is restricted to the verification of the set inclusions/emptiness described in the following lemma,  which is straightforward to prove. Again, as in the general case, we assume reduction~\ref{it:xi_negxi}in  \S~\ref{ssec:reductions} has been applied and that the corresponding variables have been eliminated.

 \begin{lemma} \label{lem:algoBool}
Let  $F = (f_1, \dots, f_n): X_1^n \to X_1^n$ be a $\odot$-$\neg$ (AND-NOT) Boolean network function with $f_i=\underset{j\in A_i}{\bigodot}x_j\odot \underset{j\in B_i}{\bigodot} \neg(x_j)$,   with $A_i\cap B_i=\emptyset$ disjoint subsets of  $\{1, \dots, n\}$. Given $x \in X_m^n$, set $A(x),B(x)\subseteq \{1,\dots,n\}$, $A(x) = \{ j~:~x_j=1\}$, $B(x) = \{ j~:~x_j=0\}$. 
  
Then, the following statements are equivalent:
  \begin{enumerate}
  \item $x$ is a fixed point of $F$.
  \item \label{it:Boolean_Ax} For any index $i$, 
  \begin{itemize}
      \item if $i \in A(x)$  it holds that $A_i \subset A(x)$ and $B_i \cap A(x) = \emptyset$,
      \item if $i \in B(x)$ it holds that $B(x) \cap A_i \not= \emptyset$ or $A(x) \cap B_i \not= \emptyset$.   
  \end{itemize} 
  \end{enumerate}

 \end{lemma}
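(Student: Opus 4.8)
\emph{Proof proposal.} The plan is to reduce the system of fixed-point equations $f_i(x)=x_i$ to the stated set-theoretic conditions by first evaluating each $f_i$ at an arbitrary Boolean point. Since $m=1$, each coordinate function is a pure product $f_i=\bigodot_{j\in A_i}x_j\odot\bigodot_{j\in B_i}\neg(x_j)$ with trivial constant (equivalently, the constant $\tfrac{c_i}{m}$ equals $1$), so I would first invoke item~\ref{it:several_products} of Proposition~\ref{prop:oper} — or simply the Boolean AND interpretation of $\odot$ together with $\neg(x)=1-x$ — to record that for $x\in X_1^n$ one has $f_i(x)=1$ precisely when $x_j=1$ for every $j\in A_i$ and $x_j=0$ for every $j\in B_i$, and $f_i(x)=0$ otherwise. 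The degenerate case $A_i=B_i=\emptyset$ gives the empty product $f_i\equiv 1$, and the hypothesis $A_i\cap B_i=\emptyset$ guarantees $f_i\not\equiv 0$; both are consistent with what follows.

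Next I would translate this dichotomy into the language of the sets $A(x)=\{j:x_j=1\}$ and $B(x)=\{j:x_j=0\}$. Since every coordinate of $x$ is $0$ or $1$, the sets $A(x)$ and $B(x)$ partition $\{1,\dots,n\}$; hence ``$x_j=1$ for all $j\in A_i$'' reads $A_i\subseteq A(x)$, which by complementation is the same as $A_i\cap B(x)=\emptyset$, and ``$x_j=0$ for all $j\in B_i$'' reads $B_i\subseteq B(x)$, the same as $B_i\cap A(x)=\emptyset$. Thus $f_i(x)=1$ iff $A_i\subseteq A(x)$ and $B_i\cap A(x)=\emptyset$, and negating, $f_i(x)=0$ iff $A_i\cap B(x)\neq\emptyset$ or $B_i\cap A(x)\neq\emptyset$.

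Finally, $x$ is a fixed point iff $f_i(x)=x_i$ for every $i$, and since $A(x)$ and $B(x)$ exhaust all indices I would split into these two exhaustive cases. If $i\in A(x)$, then $x_i=1$, so $f_i(x)=x_i$ becomes $f_i(x)=1$, which by the previous paragraph is exactly $A_i\subseteq A(x)$ and $B_i\cap A(x)=\emptyset$ — the first bullet of~\ref{it:Boolean_Ax}. If $i\in B(x)$, then $x_i=0$, so $f_i(x)=x_i$ becomes $f_i(x)=0$, i.e.\ $B(x)\cap A_i\neq\emptyset$ or $A(x)\cap B_i\neq\emptyset$ — the second bullet. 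Collecting these equivalences over all $i$ yields the equivalence of (1) and (2). There is no real obstacle here beyond bookkeeping; the only points deserving a moment's care are that the constant in the AND-NOT normal form is forced to be $1$ (so no $\max\{0,\cdot\}$ clipping intervenes), and that $A(x)$ and $B(x)$ being complementary is precisely what converts the ``$\subseteq$'' conditions into the ``$\cap=\emptyset$'' and ``$\cap\neq\emptyset$'' conditions as stated.
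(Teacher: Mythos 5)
Your proof is correct and is exactly the routine verification the paper has in mind: the paper omits the argument, declaring the lemma ``straightforward to prove,'' and your evaluation of each $f_i$ as an AND of literals followed by the translation into the complementary sets $A(x)$, $B(x)$ is the intended bookkeeping. Your side remarks (the constant is forced to be $1$ so no clipping occurs, and the empty-product case $f_i\equiv 1$ is consistent) are accurate and harmless.
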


Now, going back to Algorithm~\ref{algo2}, how does one algorithmically find all the lattice points in rational polytopes? 
A good implementation of Barvinok's algorithm mentioned in the Introduction was provided in the free software Latte~\cite{Latte}, explained in the paper~\cite{DeLoera} (now available in SageMath). We used the free implementation in~\cite{BarvinokSoftware}, explained in the report~\cite{BarvinokLeuven} that counts the lattice points and also provides them explicitly.

Note that for any $m\geq 1$ there are at most $2^{n-|J|}$ regions $\mathcal{R}_I$ (independently of $m$), while the exhaustive search of fixed points requires $(m+1)^{n}$ initial states. Moreover, the computations are parallelizable for different $I$ and further simplifications and improvements in the tree of computations might be certainly done in particular instances.

The best possible case is when the square linear systems in Step~2 have a unique solution (necessarily rational). In this case, solving each system has complexity of order $O(|I|^3)$  and we then check if its solution lies in $X_m^n$. We give in Theorem~\ref{th:regions} below an easy general condition under which this happens. 
We remark however that when the rank is not maximum, the complexity of Step~2 might raise (up to the order of $(m+1)^n$).

\begin{definition} \label{def:Mij}
Consider an $\odot$-$\neg$ network function $F=(f_1, \dots, f_n)$ with at least one $\odot$ operation in each $f_i$, let $\ell_1, \dots, \ell_n$ the associated
linear functions as in the statement of Theorem~\ref{thm:ell}.
Given $I\subseteq \{1,\dots, n\}$ of cardinality $s$ we denote by
$M_I$ the $s \times s$ matrix  with both rows and columns indicated by $I$ and with
entries: 
\begin{equation}\label{eq:Matrix_MI}
 (M_I)_{ij}=\left\lbrace\begin{array}{ll}\ell_{ii}-1 & \text{if } i=j,\\
\ell_{ij} & \text{if } i\neq j
\end{array}\right.
\end{equation}
\end{definition}

\begin{theorem}\label{th:regions}
 Given a $\odot$-$\neg$ network defined by $(f_1, \dots, f_n)$.  Let $J \subset \{1, \dots, n\}$ denote the
set of indices $i$ for which $f_i$ has no $\odot$ operations.
If for $I\subseteq \{1,\dots,n\}$ with $I \subset J^c$, the matrix $M_I$ defined in~\eqref{eq:Matrix_MI} satisfies 
$\det(M_I)\neq 0$, then there is at most one fixed point in the region $\mathcal{R}_I$. 

\end{theorem}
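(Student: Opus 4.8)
The plan is to show that inside the region $\mathcal{R}_I$ the equations $F(x)=x$ degenerate into a single \emph{square linear} system whose coefficient matrix is precisely $M_I$; once this is established, $\det(M_I)\neq 0$ forces this system to have a unique rational solution, and hence $\mathcal{R}_I$ can contain at most one fixed point.

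First I would reduce to the case $J=\emptyset$. For every $j\in J$ the coordinate $f_j$ is a constant, a variable, or a negated variable, so reduction~\ref{it:xi_negxi} of \S~\ref{ssec:reductions} eliminates $x_j$; by Remark~\ref{rem:odot} the outcome is again an $\odot$-$\neg$ network, now with a $\odot$ in every coordinate and with index set $J^c$, and since $I\subseteq J^c$ neither $\mathcal{R}_I$ nor $M_I$ changes under this relabeling. From now on I assume $J=\emptyset$, so $I\subseteq\{1,\dots,n\}$ and every $f_i$ is given by Theorem~\ref{thm:ell} as $f_i=\max\{0,\ell_i(x)\}$ with $\ell_i$ as in~\eqref{eq:ell}.

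Next I would linearize. Let $x\in X_m^n\cap\mathcal{R}_I$ satisfy $F(x)=x$. By the defining inequalities of $\mathcal{R}_I$ we have $\ell_i(x)\geq 0$ for $i\in I$ and $\ell_j(x)\leq 0$ for $j\notin I$, hence $f_i(x)=\ell_i(x)$ for $i\in I$ and $f_j(x)=0$ for $j\notin I$. Thus on $\mathcal{R}_I$ the system $F(x)=x$ is equivalent to
\[
\ell_i(x)=x_i\quad(i\in I),\qquad x_j=0\quad(j\notin I).
\]
Write $\ell_{ij}$ for the coefficient of $x_j$ in $\ell_i$, so that $\ell_{ij}=p_{ij}$ for $j\in A_i$, $\ell_{ij}=-q_{ij}$ for $j\in B_i$, and $\ell_{ij}=0$ otherwise, while the constant term of $\ell_i$ is $c_i^*/m$. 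Substituting $x_j=0$ for $j\notin I$ into the first group of equations removes every term indexed outside $I$ and leaves, for each $i\in I$,
\[
\sum_{j\in I,\ j\neq i}\ell_{ij}\,x_j+(\ell_{ii}-1)\,x_i=-\tfrac{c_i^*}{m}.
\]
This is exactly the linear system $M_I\cdot(x_i)_{i\in I}=\big(-c_i^*/m\big)_{i\in I}$ with $M_I$ as in~\eqref{eq:Matrix_MI}. If $\det(M_I)\neq 0$ it has a unique solution over $\mathbb{Q}$, which together with $x_j=0$ ($j\notin I$) is the only candidate point of $\mathbb{Q}^n$ solving $F(x)=x$ in $\mathcal{R}_I$; a fortiori there is at most one fixed point in $X_m^n\cap\mathcal{R}_I$.

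No step here is genuinely hard; the one place where care is needed is the bookkeeping that identifies the linearized system with $M_I$ as written in~\eqref{eq:Matrix_MI} --- in particular that the $-1$ on the diagonal is precisely the effect of moving $x_i$ from the right-hand side of $\ell_i(x)=x_i$ to the left, and that eliminating the $J$-variables does not alter any $\ell_i$ once restricted to the $J^c$-coordinates.
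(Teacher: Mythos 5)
Your proof is correct and is essentially the paper's own argument, which the authors dismiss as immediate: on $\mathcal{R}_I$ the fixed-point equations linearize to the square system whose matrix is exactly $M_I$, and $\det(M_I)\neq 0$ gives at most one solution. One caveat: the preliminary reduction to $J=\emptyset$ is both dispensable and not quite right as stated --- eliminating a variable via $x_j=x_k$ with $k\in I$ changes the coefficient $\ell_{ik}$ and hence $M_I$ --- but your linearization in the second and third paragraphs applies directly to the original network, since for $j\in J\setminus I$ the region condition $\ell_j(x)\le 0$ combined with $x_j=f_j(x)=\ell_j(x)$ and $x_j\ge 0$ still forces $x_j=0$.
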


The proof of Theorem~\ref{th:regions} is immediate as $M_I$ is the matrix of the linear
system $\left\lbrace \begin{array}{rl} \ell_i(x)=x_i & i\in I \\ 0=x_j & j\notin I\end{array}\right.$. The following corollary is also immediate.

\begin{corollary}
 Let $L'\in \Z^{n\times n}$ be the matrix of coefficients of the linear forms $\ell'_1,\dots,\ell'_n$ as in Theorem \ref{thm:ell} and  $Id_n$ be the $n \times n$ identity matrix. If $\det(L'-Id_n)\neq 0$ then there exists at most a single fixed point with all nonzero coordinates.
\end{corollary}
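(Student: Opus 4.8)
The plan is to reduce the statement to the uniqueness of the solution of a single square linear system, in the spirit of Theorem~\ref{th:regions} but applied to the ``region'' in which every coordinate is positive. First I would observe that a fixed point $x^*$ all of whose coordinates are nonzero in fact has $x_i^* > 0$ for every $i$, since $X_m \subset [0,1]$. Consequently no coordinate function $f_i$ can be identically zero (otherwise $x_i^* = f_i(x^*) = 0$), so each $f_i$ admits the representation $f_i = \max\{0, \ell_i(x)\}$ of Theorem~\ref{thm:ell}; this covers also the degenerate coordinates where $f_i$ is a constant, a single variable, or a single negation, because for those the linear form $\ell_i$ already takes values in $[0,1]$ on $X_m^n$, so $\max\{0,\ell_i\} = \ell_i$ there.

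Next I would evaluate at $x^*$: from $x_i^* = f_i(x^*) = \max\{0, \ell_i(x^*)\}$ together with $x_i^* > 0$ one gets $\ell_i(x^*) = x_i^*$ for all $i = 1, \dots, n$. Writing $c^* = \tfrac1m(c_1^*, \dots, c_n^*)$ for the vector of constant terms of the $\ell_i$ in~\eqref{eq:ell}, and recalling that the linear parts $\ell'_i$ have coefficient matrix $L'$, this system of $n$ equations in $n$ unknowns reads $(L' - Id_n)\, x^* = -c^*$. Since $\det(L' - Id_n)\neq 0$ by hypothesis, it has the single solution $x^* = (Id_n - L')^{-1} c^*$. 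Hence $F$ has at most one fixed point all of whose coordinates are nonzero, namely this vector in case it happens to lie in $X_m^n$ with every entry positive, and none otherwise.

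I do not anticipate a genuine obstacle: the argument is a direct instance of the observation underlying Theorem~\ref{th:regions}, with $M_{\{1,\dots,n\}} = L' - Id_n$. The only bookkeeping point requiring care is that, unlike in Theorem~\ref{th:regions}, here the equation $\ell_i(x) = x_i$ must be imposed for \emph{every} index $i$ (including those coordinates with no $\odot$ operation), which is legitimate precisely because for such coordinates $f_i$ still agrees with $\max\{0,\ell_i\}$ on $X_m^n$, so a fully positive fixed point forces $\ell_i(x^*) = x_i^*$ there as well.
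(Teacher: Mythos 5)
Your proposal is correct and follows essentially the same route as the paper: the paper treats this as an immediate consequence of Theorem~\ref{th:regions} with $I=\{1,\dots,n\}$, since $M_{\{1,\dots,n\}}=L'-Id_n$ is precisely the matrix of the square system $\ell_i(x)=x_i$ that any fully nonzero fixed point must satisfy. Your extra remark about the coordinates with no $\odot$ operation (where $\max\{0,\ell_i\}=\ell_i$ on $X_m^n$) is a sensible bit of bookkeeping that the paper leaves implicit, but it does not change the argument.
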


We end this section showing how to compute the fixed points using our app~\cite{Aye}.

\begin{example} 
 We consider again the biological example in \S~\ref{ssec:mammalian},  in the case of $m=9$, $c=5/9, c'=  c''=8/9$. The reduced system~\eqref{eq:example} has four variables (ordered $u_1, u_5, u_6, u_7$). It can be written as:
 
\begin{align*} \label{eq:example2}
\nonumber h_{u_1} &= \max\{0, 2 u_5 -1\} \\
h_{u_5} &= \max \{0, -u_6 -u_7 + 1/9\}\\
\nonumber h_{u_6} &= \max\{0, -u_5+8/9\}\\
\nonumber h_{u_7} &= \max\{0, -u_1+8/9\}
\end{align*} 
\end{example}

After installing our app {\sl Fixed points of MV networks} from the github repository~\cite{Aye}, input the coefficients of the linear forms in each function, multiplying {\em only} the constant terms
by $m$, to get the output as in Figure~\ref{fig:app}. Of course, there is a lot of room to improve this implementation.

\begin{figure}[ht!]
\begin{center}
\includegraphics[scale=0.5,trim={0 0 0 2mm},clip]{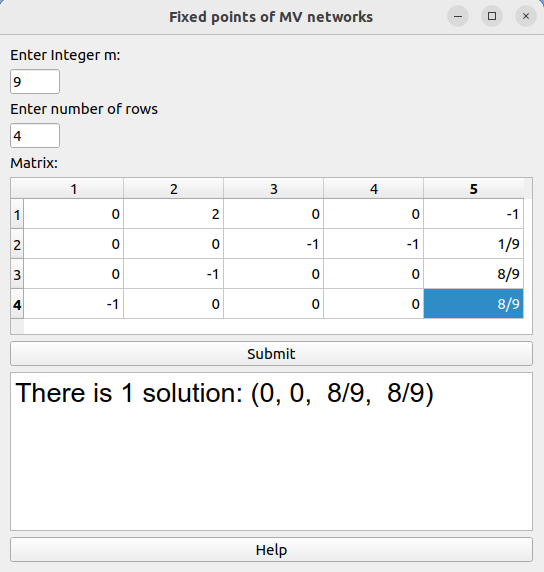}
\caption{Screenshot of the app~\cite{Aye}}\label{fig:app}
\end{center}
\end{figure}

\section{Application to the Model for Denitrification in Pseudomonas aeruginosa} \label{sec:bioexample}

Denitrification is a process mediated by bacteria that converts nitrogen compounds into gaseous forms. It is a final step in the nitrogen cycle and is important for recycling nitrogen. In \cite{ABL} the authors try to unveil the environmental factors that contribute to the greenhouse gas N$_2$O accumulation, especially in Lake Erie (Laurentian Great Lakes, USA). Their model predicts the long-term behavior of the denitrification pathway and involves molecules (O$_2$, PO$_4$, NO$_3$, $NO_2$, $NO$, $N_{2}O$, $N_2$), proteins ($PhoRB$, $PhoPQ$, $PmrA$, $Anr$, $Dnr$, $NarXL$, $NirQ$) and genes($nar$, $nir$, $nor$, $nos$). The process in P. aeruginosa is described by  three Boolean external parameters (O$_2$, PO$_4$, NO$_3$),  four Boolean variables ($PhoRB, PhoPQ, PmrA$,$Anr$), and eleven ternary variables ($NarXL$, $Dnr$, $NirQ$, $nar$, $nir$, $nor$, $nos$, $NO_2$, $NO$, $N_{2}O$, $N_2$). In our setting, we assume that they take three values (in $X_2$). The corresponding update rules  in \cite{ABL} are:

\begin{center}
    \begin{tabular}{c@{\hspace{1mm}}c@{\hspace{1mm}}lc@{\hspace{1mm}}c@{\hspace{1mm}}lc@{\hspace{1mm}}c@{\hspace{1mm}}l}
    $f_{PhoRB}$ & $=$ & $\neg(PO_4)$, & $f_{NarXL}$ & $=$ & $\min\{Anr,NO_3\}$, & $f_{NO}$ & $=$ & $\min\{NO_2,nir\}$,\\
    $f_{PhoPQ}$ & $=$ & $PhoRB$, & $f_{nir}$ & $=$ & $\max\{NirQ,\min\{Dnr,NO\}\}$, & $f_{N_2O}$ & $=$  & $\min\{NO,nor\}$,\\
    $f_{PmrA}$ & $=$ & $\neg(PhoPQ)$, & $f_{nor}$ & $=$ & $\max\{NirQ,\min\{Dnr,NO\}\}$, & $f_{N_2}$ & $=$ & $\min\{N_2O,nos\}$,\\
    $f_{Anr}$ & $=$ & $\neg(O_2)$, & $f_{nos}$ & $=$  & $\min\{Dnr,NO\}$. & & &  
    \end{tabular}
\end{center}

As we pointed out in our Introduction, they cannot accurately express four variables (${Dnr}$, ${NirQ}$, ${nar}$ and ${NO_2}$) using {\em only} the operators MAX, MIN and NOT (indicated with a $*$ in~\cite[Table~3]{ABL}),  and are then forced to show all the possible values in supplementary tables. 
 From the transition tables of each of the 15 variables, they build a polynomial dynamical system and compute the fixed points of the system (with a software based on Gr\"obner basis computations) under the environmental conditions of interest that emerge after fixing the values of the external parameters. 
 
\

To further show the strength of the tools we developed in this paper, we implemented their model in our setting. We can translate the functions shown above to operations with $\odot$ and $\oplus$ using Proposition~\ref{prop:oper}. For the remaining functions, $f_{Dnr}$, $f_{NirQ}$, $f_{nar}$ and $f_{NO_2}$, we considered the supplementary tables in \cite{ABL} without the smoothing process since the fixed points are not changed (see Lemma~\ref{lem:continuity}). 
For instance, the values for $NirQ$ are shown in Figure~\ref{fig:NirQ}. We can see that the fourth transition $(NarXL=1,Dnr=0)\mapsto NirQ=\frac{1}{2}$ does not agree with  the function $\max\{NarXL,Dnr\}$ in~\cite{ABL}. However, we can represent any multivalued function in our framework. There is always a standard alternative to do this by building the interpolation function in Theorem~\ref{th:logic}. In this case, there is a much better alternative: find a more intuitive and simple expression where $Dnr$ is an activator and $NarXL$ is a mild activator (see Motifs~1 and~4 in \S~\ref{ssec:motifs}). See Figure~\ref{fig:NirQ}.
\begin{figure}[ht!]
 \begin{minipage}{0.25\textwidth}
  \begin{tblr}{
    colspec = {|c|c|c|},
    row{1} = {purple7},
    column{3} = {teal7},
    row{5} = {yellow},
  }
    \hline
      NarXL & Dnr & NirQ \\
    \hline
        0 & 0 & 0\\
    \hline
        0 & 1/2 & 1/2\\
    \hline
        0 & 1 & 1\\
    \hline
        1 & 0 & 1/2\\
    \hline
        1 & 1/2 & 1\\
    \hline
        1 & 1 & 1\\
    \hline
  \end{tblr}
 \end{minipage}
\begin{minipage}{0.7\textwidth}
  Update rule in~\cite{ABL}: $*\max\{NarXL,Dnr\}$. The value $\tfrac{1}{2}$ at $(0,1)$ in the fourth row is not obtained with this expression.

  Interpolation expression: 
  {\small
  \begin{align*}
        f_{NirQ} &= \tfrac{1}{2}\odot \neg(NarXL)^2 \odot \neg\left(\neg(Dnr) \odot \tfrac{1}{2}\right)^2 \odot \neg(Dnr \odot \tfrac{1}{2})^2\\
        &\oplus \neg(NarXL)^2 \odot Dnr^2\\
        &\oplus \tfrac{1}{2} \odot NarXL^2 \odot \neg(Dnr)^2\\
        &\oplus NarXL^2 \odot \neg\left(\neg(Dnr) \odot \tfrac{1}{2}\right)^2 \odot \neg\left(Dnr \odot \tfrac{1}{2}\right)^2\\
        &\oplus NarXL^2 \odot Dnr^2.
  \end{align*}
  }
  
  Simpler expression: $f_{NirQ} = (NarXL \ominus \tfrac{1}{2}) \oplus Dnr$.

 \end{minipage}
 \caption{$NirQ$ update rule. The table on the left is the multivalued non-smooth version of S3~Table in~\cite{ABL}.  On the right: the update rule approximation shown in~\cite{ABL}; the update rule obtained by interpolation from the table according to Theorem~\ref{th:logic}; and the expression obtained by considering $Dnr$ as an activator and $NarXL$ as a mild activator. The last two expressions coincide with the values in the table.}\label{fig:NirQ}
\end{figure}

We can represent $f_{NirQ} = (NarXL \ominus \frac{1}{2}) \oplus Dnr$, $f_{Dnr} = (Anr\ominus \frac{1}{2}) \oplus (\neg(PmrA)\odot Anr \odot NarXL)$, and $f_{NO_2} =NO_3  \odot nar$ which have intuitive interpretations.

In our setting, we  fix the value of $m$ for all functions in the network. In this model, most variables take three values, corresponding to the choice $m=2$. 
Their Boolean variables can be reduced to three. Instead of assigning them arbitrarily values in $X_m = \{0,\tfrac{1}{2},1\}$, for instance, the values $0$ or $1$,  it is more reasonable in our setting to consider  the $8$ different resulting models in the reminder $3$-valued variables, for each of the possible  values of the Boolean variables.

 We then set the values of each Boolean external parameter (O$_2$, PO$_4$, NO$_3$) and study the corresponding fixed points of each of the eight possible systems separately. We expand on the details below. The two relevant conditions for denitrification with low O$_2$ are: O$_2=~$PO$_4=0$, NO$_3=1$ (considered ``the perfect condition for denitrification'') and  O$_2=0$, PO$_4=~$NO$_3=1$ (the denitrification condition disrupted by PO$_4$ availability). There are other two conditions with low O$_2$ that the authors in~\cite{ABL} disregard because they are less likely in fresh-waters: O$_2=~$PO$_4=~$NO$_3=0$, and O$_2=~$NO$_3=0$, PO$_4=1$, but nonetheless we computed the fixed points in our setting since they are easy to find. We also address the remaining conditions, with high O$_2$ (the ``aerobic conditions").  The fixed points we computed in all cases coincide with those found in~\cite{ABL}.
As remarked before, it is not necessary to implement the continuous versions of the update functions since the fixed points are not changed.

As we mentioned, we translated the MIN/MAX functions in~\cite[Table~3]{ABL} using Proposition~\ref{prop:oper}. For those variables the authors could not explain with MIN/MAX expressions, we represent exactly the update function presented in the supplementary tables in~\cite{ABL} (without the smoothing process) with the following expressions over $X_2$:
\begin{align*}
  f_{NirQ} &= (NarXL \ominus \tfrac{1}{2}) \oplus Dnr,\\
  f_{Dnr} &= (Anr\ominus \tfrac{1}{2}) \oplus (\neg(PmrA)\odot Anr \odot NarXL),\\
  f_{NO_2}&=NO_3  \odot nar, \\
  f_{nar} &= \left[\left(NarXL\oplus \tfrac{1}{2}\right)^2 \odot \left(Dnr \oplus \tfrac{1}{2}\right)^2 \odot \left(NO\oplus \tfrac{1}{2}\right)^2\right] \oplus \\
  &\oplus \left[ \tfrac{1}{2} \odot \left(NarXL \oplus \left(Dnr^2 \odot NO^2\right) \right) \right].\\   
\end{align*}

We then set the values of each Boolean external parameter (O$_2$, PO$_4$, NO$_3$) and obtain eight different systems. For each of them we compute the fixed points. The condition with low O$_2$, low PO$_4$ and high NO$_3$ (O$_2=$PO$_4=0$, NO$_3=1$) is considered in~\cite{ABL} ``the perfect condition for denitrification''. The condition with low O$_2$, high PO$_4$ and high NO$_3$ (i.e. O$_2=0$, PO$_4=$NO$_3=1$) corresponds to the denitrification condition disrupted by PO$_4$ availability. There are two conditions that they disregard because they are less likely in fresh-waters: low O$_2$, low PO$_4$ and low NO$_3$ (O$_2=$PO$_4=$NO$_3=0$), and low O$_2$, high PO$_4$ and low NO$_3$ (O$_2$=NO$_3=0$, PO$_4=1$), but as the fixed points can be computed easily in our setting (and are unique in each case) we show them in Table~\ref{tab:fp}. The remaining conditions are considered as ``aerobic conditions" and their fixed points (which are unique for each case) are also immediate to obtain and are shown in Table~\ref{tab:fp}. 

\begin{table}[ht!]
{\scriptsize
\begin{tabular}{ |c|c|c|c|c|c|c|c|c|c|c|c|c|c|c|c|c|c|}
\hline
\multicolumn{3}{|c|}{\small External parameters} & \multicolumn{15}{|c|}{\small Steady State}\\
\hline
 $O_2$ & $PO_4$ & $NO_3$ & PhoRB & PhoPQ & PmrA & Anr & NarXL & Dnr & NirQ & nar & nir & nor & nos & $NO_2$& NO & $N_2O$ & $N_2$\\
 \hline
 0 & 0 & 0 & 1 & 1 & 0 & 1 & 0 & $\frac{1}{2}$ &  $\frac{1}{2}$ & 0 &  $\frac{1}{2}$ &  $\frac{1}{2}$ & 0 &0 & 0 & 0 & 0\\
\hline
0 & 1 & 0 & 0 & 0 & 1 & 1 &  0 &  $\frac{1}{2}$ &  $\frac{1}{2}$ &  0 &  $\frac{1}{2}$ &   $\frac{1}{2}$ &  0 & 0 & 0 & 0 &  0 \\
\hline
1 & 0 & 0 & 1 &  1 & 0 & 0 & 0 & 0 &  0 &  0 &  0 & 0 &  0 &  0 & 0 &  0 &  0\\
\hline
 1 & 0 & 1 & 1 &  1 & 0 &  0 &  0 & 0 & 0 & 0 & 0 & 0 & 0 &  0 & 0 & 0 &  0\\
\hline
1 & 1 &  0 &  0 &  0 & 1 & 0 & 0 & 0 & 0 &  0 &  0 & 0 & 0 & 0 & 0 & 0 & 0\\
\hline
1 &1 & 1 &  0 &  0 &  1 &  0 &  0 &  0 &  0 & 0 &  0 & 0 & 0 &  0 &  0 & 0 & 0\\
\hline
\end{tabular}
}
\caption{Steady states under the environmental conditions determined by the external parameters. The first two conditions are disregarded in~\cite{ABL} because they are not  biologically relevant. The last four cases (the "aerobic conditions") coincide with~\cite[Fig.~2]{ABL}.}\label{tab:fp}
\end{table}

For the two cases of interest O$_2=0$, PO$_4=0$, NO$_3=1$ (``the perfect condition for denitrification'') and O$_2=0$, PO$_4=1$, NO$_3=1$ (``the denitrification condition disrupted by PO$_4$ availability") we built the $\odot$-$\neg$ system in Theorem~\ref{thm:DN} by adding $20$ new variables. We apply the reductions in \S~\ref{ssec:reductions} and obtain, in each case, reduced systems for the fixed points. 

The case O$_2=0$, PO$_4=1$, NO$_3=1$ renders a system of three equations in three variables which has a unique solution and determines the fixed point:

\

{\small
\begin{tabular}{ |c|c|c|c|c|c|c|c|c|c|c|c|c|c|c|c|}
\hline
PhoRB & PhoPQ & PmrA & Anr & NarXL & Dnr & NirQ & nar & nir & nor & nos & $NO_2$& NO & $N_2O$ & $N_2$\\
\hline
1&1&0&1&1&1&1&1&1&1&1&1&1&1&1\\
\hline
\end{tabular}
}

\

The case O$_2=0$, PO$_4=0$, NO$_3=1$ gives rise to a system of five equations in five variables which has a unique solution and determines the fixed point:

\

{\small
\begin{tabular}{ |c|c|c|c|c|c|c|c|c|c|c|c|c|c|c|c|}
\hline
PhoRB & PhoPQ & PmrA & Anr & NarXL & Dnr & NirQ & nar & nir & nor & nos & $NO_2$& NO & $N_2O$ & $N_2$\\
\hline
0&0&1&1&1&$\frac{1}{2}$&1&1&1&1&$\frac{1}{2}$&1&1&1&$\frac{1}{2}$\\
\hline
\end{tabular}
}

\

All the fixed points we obtain coincide with the fixed points obtained in~\cite{ABL}. We have also implemented their model using the indicator functions in Theorem~\ref{th:logic} for $f_{Dnr}$, $f_{NirQ}$, $f_{nar}$ and $f_{NO_2}$, and computed the fixed points of each of the eight systems determined by fixing the values of the external parameters. The steady states in Table~\ref{tab:fp} were obtained straightforwardly. We used the implementation~\cite{Aye} (without considering any network reductions) for the remaining two cases. 

\section{Discussion}
Discrete-time and discrete-space models, such as the multivalued (and in particular, Boolean)  networks studied here, are broadly applicable in biology and beyond, such as engineering, computer science, and physics. Mathematically, they are simply set functions on strings of a given finite length over a finite alphabet. 
Their dynamics is captured by a directed graph, exponential in the number of variables of the model. Traditionally, exhaustive simulation or sampling the state space, for larger models, was the only approach to characterize model dynamics. One strategy to make mathematical tools available for this purpose is based on the observation that any function $f: k^n \longrightarrow k$ can be expressed as a polynomial function over a finite field $k$. 
This allows one to use tools from computer algebra. For instance, primary decomposition of monomial ideals can be used for reverse-engineering of gene regulatory networks from time courses of experimental observations~\cite{Ve1,Ve3}. 

Here, we propose a different approach to bring mathematical tools to bear on the problem of analysis of a broad class of discrete models. Using multivalued logic, we connect this class of models to problems in combinatorics and enumerative geometry, in particular the calculation of rational points in polytopes. This is combined with a model reduction method that generally results in substantially smaller models with steady states  that bijectively correspond to the steady states of the original model. The reduction steps are ad hoc, and there is no theory behind them that would allow the derivation of a minimal reduced model or even a result about whether the order in which the reductions are applied matters in terms of the final reduced model. We believe that there is further interesting mathematics to be discovered in this context. The next important step is to use the tools we present to understand the full dynamics of the models.

Generally, we believe that multivalued networks and related models are rich and fascinating mathematical objects at the crossroads of combinatorics, algebra, and dynamical systems. They can be studied from the point of view of applications, or for purely mathematical reasons, or both, as in \cite{R23}. 

\section*{Acknowledgements} We are grateful to Alejandro Petrovich and to Patricio D\'{\i}az Varela for the references and explanations about MV-algebras in logic.  We are also indebted to the referees for their extensive and very helpful comments and references.

\end{document}